\pdfoutput=1
\documentclass[11pt]{amsart}
\baselineskip=7.0mm
\usepackage{amsmath,amsthm,verbatim,amssymb,amsfonts,amscd,graphicx}
\usepackage{graphics}
\usepackage{tensor}
\usepackage{enumitem}
\usepackage[utf8]{inputenc}
\usepackage{graphicx}
\usepackage{xcolor}
\usepackage{hyperref}
\usepackage{float}
\usepackage{harpoon}
\setlength{\baselineskip}{1.09\baselineskip}

\definecolor{brass}{rgb}{0.71, 0.65, 0.36}

\usepackage{tikz-cd}
\usepackage{slashed}

\theoremstyle{plain}

\newtheorem{theorem}{Theorem}[section]

\newtheorem{lemma}[theorem]{Lemma}
\newtheorem{remark}[theorem]{Remark}

\newtheorem{proposition}[theorem]{Proposition}

\theoremstyle{definition}
\newtheorem{definition}[theorem]{Definition}
\theoremstyle{definition}

\def\Ric{\operatorname{Ric}}

\def\sup{\operatorname{sup}}

\def\sup{\operatorname{sup}}

\def\div{\operatorname{div}}

\def\rr{\mathbb{R}}

\numberwithin{equation}{section}

\setlength{\textwidth}{6.6in} \setlength{\textheight}{8.6in}
\hoffset=-0.83truein
\voffset=-0.1truein

\numberwithin{equation}{section}

\allowdisplaybreaks

\begin{document}

\title[Stability of the Positive Mass Theorem]{Stability of the Positive Mass Theorem Under Ricci Curvature Lower Bounds}

\author[Kazaras]{Demetre P. Kazaras}
\address{Department of Mathematics\\
Duke University\\
Durham, NC 27708, USA}
\email{demetre.kazaras@duke.edu}

\author[Khuri]{Marcus A. Khuri}
\address{Department of Mathematics\\
Stony Brook University\\
Stony Brook, NY 11794, USA}
\email{khuri@math.sunysb.edu}

\author[Lee]{Dan A. Lee}
\address{Graduate Center and Queens College\\
City University of New York\\
365 Fifth Avenue\\
New York, NY 10016, USA}
\email{dan.lee@qc.cuny.edu}


\thanks{M. Khuri acknowledges the support of NSF Grants DMS-1708798, DMS-2104229, and Simons Foundation Fellowship 681443.}

\begin{abstract}
We establish Gromov-Hausdorff stability of the Riemannian positive mass theorem under the assumption of a Ricci curvature lower bound. More precisely, consider a class of orientable complete uniformly asymptotically flat  Riemannian 3-manifolds with nonnegative scalar curvature, vanishing second homology, and a uniform lower bound on Ricci curvature. We prove that if a sequence of such manifolds has ADM mass approaching zero, then it must converge to Euclidean 3-space in the pointed Gromov-Hausdorff sense. In particular, this confirms Huisken and Ilmanen's conjecture on stability of the positive mass theorem under the assumptions described above.  The proof is based on the harmonic level set approach to proving the positive mass theorem, combined with techniques used in the proof of Cheeger and Colding's almost splitting theorem. Furthermore, we show that the same results hold under a more general lower bound on scalar curvature.
\end{abstract}

\maketitle

\section{Introduction}
\label{sec1}
\setcounter{equation}{0}
\setcounter{section}{1}

The Riemannian positive mass theorem states that the ADM mass of a complete asymptotically flat Riemannian manifold
with nonnegative scalar curvature is nonnegative. Moreover, there is a rigidity statement: the mass of such a manifold is zero if and only if it is isometric to Euclidean space. A natural question to then ask is whether small mass implies that the manifold must be close to Euclidean space in some sense. The \textit{positive mass stability conjecture} asserts an affirmative answer to this question, although the precise statement depends on the topology used to define the closeness. The conjecture was first stated 20 years ago in terms of Gromov-Hausdorff distance by Huisken-Ilmanen in \cite[page 430]{HI}, and more recently in terms of intrinsic flat distance by the third author and Sormani in \cite[page 216]{LeeSormani} (see also \cite[Conjecture 10.1]{Sormani}).

\begin{definition} \label{def:AF}
Given $b>0$ and $\tau>\tfrac{1}{2}$, we say that a smooth connected 3-dimensional Riemannian manifold $(M, g)$ is \emph{$(b,\tau)$-asymptotically flat} if there is a compact set $\mathcal{K}\subset M$ such that its complement is diffeomorphic to the complement of the closed unit ball in Euclidean space, and in the coordinates given by this diffeomorphism $\Phi:M\setminus\mathcal{K}\rightarrow  \mathbb{R}^3 \setminus \bar{B}_1^{\mathbb{E}}(0)$, the metric satisfies
\begin{equation}\label{asymptoticallf}
|\partial^\beta \left(g_{ij}-\delta_{ij}\right)(x)|\leq b|x|^{-\tau-|\beta|},
\end{equation}
for all multi-indices $\beta$ with $|\beta|=0,1,2$.
Furthermore, we require the scalar curvature $R_g$ to be integrable over $M$. Under these assumptions, the \emph{ADM mass} is well-defined and given by
\begin{equation}
m(g)=\lim_{r\rightarrow\infty}\frac{1}{16\pi}\int_{S_{r}}\sum_{i,j=1}^3 (g_{ij,i}-g_{ii,j})\nu^j dA,
\end{equation}
where $\nu$ is the Euclidean unit outer normal to the coordinate sphere $S_r:=\partial B^{\mathbb{E}}_r(0)$, and $dA$ represents its area element induced by the Euclidean metric.
\end{definition}

We will refer to $\Phi$ as the \emph{asymptotic coordinate chart} for $(M,g)$, and it is always implicitly part of the data when we refer to a $(b,\tau)$-asymptotically flat manifold $(M, g)$.

When $(M, g)$ is complete and $R_g \geq 0$, the positive mass theorem states that $m(g)\geq 0$, and $m(g)=0$ if and only if $(M,g)$ is isomorphic to Euclidean space. This theorem was established in the late 1970's by Schoen and Yau \cite{SchoenYau,SchoenYau3}, using an indirect proof based on the existence of stable minimal surfaces, along with manipulations of the stability inequality. Witten \cite{PT1,Witten1} later gave an alternative proof in which the mass is directly expressed as the integral of a nonnegative quantity depending on an asymptotically constant harmonic spinor. This argument relies upon the existence of harmonic spinors and the Lichnerowicz formula. Later, Lohkamp \cite{Lohkamp0} explained how the positive mass theorem can be reduced to the nonexistence of positive scalar curvature metrics on the connected sum $N\# T$ of a compact manifold $N$ with a torus $T$ (which is known from~\cite{GromovLawson,SchoenYau79structure}).
In 2001, as a byproduct of their proof of the Penrose inequality, Huisken and Ilmanen \cite{HI} proved the result using a weak version of inverse mean curvature flow and monotonicity of Hawking mass. More recently, in 2018 Yu Li \cite{Li0} gave a proof using Ricci flow.

As for the positive mass theorem in higher dimensions, Witten's proof works for all spin manifolds, while Schoen and Yau were able to iterate their argument up to dimensions less than 8 \cite{SchoenYau79structure}. For dimensions 8 and higher, see the articles by Schoen and Yau \cite{SchoenYau4} and Lohkamp \cite{Lohkamp1}. A survey of many of the results described above may be found in \cite{Lee}.

Most recently, Bray, Stern, and the first two authors \cite{BKKS} obtained a new proof of the positive mass theorem,  which was motivated by Stern's original work in \cite{Stern1}. Similar to Witten's approach, it gives an explicit lower bound for the mass as an integral of a nonnegative quantity depending on an \emph{asymptotically linear harmonic function}.
It is this approach to mass that we have found to be effective for examining the stability question. Since this approach is only known to work in three dimensions, we will restrict our attention to three dimensions throughout the rest of this article.

The question of stability is made delicate by the presence of ``gravity wells'' and
horizons. It is well-known that the mass cannot ``see behind" compact minimal surfaces, and thus these horizons may hide geometry and topology, preventing proximity to Euclidean space. This problem may be resolved, however, by cutting along the outermost minimal surface and discarding the trapped region behind it. For this reason, many results dealing with stability are stated or conjectured for the exterior regions lying outside the outermost minimal surface. The issue of ``gravity wells,'' on the other hand, is not so easily overcome. This imprecise concept refers to long, thin finger-like protrusions \cite[Figure 1]{LeeSormani}, or spikes, that do not violate nonnegative scalar curvature while ``contributing" very little to the mass:
An example of Ilmanen \cite[Figure 1]{SormaniWenger} shows that it is possible to have a sequence of manifolds with masses tending to zero, such that neither the number of gravity wells nor their volume or diameter can be controlled. Moreover, this sequence does not converge in the pointed Gromov-Hausdorff sense to Euclidean space. One proposal to deal with this problem is to somehow cut out the wells to obtain a modified exterior region which converges properly, as described in the stability conjecture of Huisken-Ilmanen~\cite[page 430]{HI}. An alternative approach is to use a different form of convergence which does not require removal of the gravity wells,
 such as the intrinsic flat distance of Sormani-Wenger \cite{SormaniWenger}, whose design was motivated in part by the phenomenon exhibited by the Ilmanen examples.

Previous results concerning positive mass stability have primarily focused on
establishing intrinsic flat convergence. Several special cases have been treated so far. This includes the third author and Sormani's stability theorem under the assumption of spherical symmetry \cite{LeeSormani}, with its extensions to the asymptotically hyperbolic realm by Sakovich-Sormani \cite{SakovichSormani} as well as to the spacetime regime in the work of Bryden, the second author, and Sormani~\cite{BrydenKhuriSormani}. Various papers by Huang, the third author, Sormani, Allen, and Perales \cite{ AllenPerales,HuangLee,HuangLeePerales,HuangLeeSormani} treated the graphical setting, and Sormani-Stavrov \cite{SormaniStavrov} studied the case of geometrostatic manifolds. Furthermore, results that do not quite establish intrinsic flat convergence but do obtain Sobolev bounds on the metric may be found in \cite{Allen,Bryden}; see also the earlier work surveyed in~\cite{LeeSormani}.

In the current paper we consider the question of stability in the presence of a lower bound on Ricci curvature. Under such a bound, one does not expect Ilmanen-type ``gravity wells'' to arise, and thus it is reasonable that we are able to obtain convergence in the Gromov-Hausdorff sense. In the following statement of our main theorem, $B_r(p)$ denotes the geodesic ball of radius $r$ centered at a point $p\in M$.

\begin{theorem}\label{t:main}
Fix  $b>0$, $\tau>1/2$,  $\kappa>0$, and a point $\mathbf{p}\in\mathbb{R}^3\setminus \bar{B}_1^{\mathbb{E}}(0)$. Let $\{(M_l,g_l)\}_{l=1}^\infty$ be a sequence of orientable complete $(b,\tau)$-asymptotically flat Riemannian 3-manifolds with points $p_l\in M_l$ satisfying
\begin{enumerate}
\item $\Phi_l(p) =\mathbf{p}$ where $\Phi_l$ is the asymptotic coordinate chart of $(M_l, g_l)$;
\item trivial second homology, $H_2(M_l,\mathbb{Z})=0$; \label{homology_assumption}
\item nonnegative scalar curvature, $R_{g_l}\geq 0$; \label{scalar_assumption}
\item a uniform lower bound on Ricci curvature, $\mathrm{Ric}(g_l)\geq -2\kappa g_l$.
\end{enumerate}
If the ADM masses of $(M_l,g_l)$ converge to zero, then $(M_l,g_l,p_l)$ converges to Euclidean space
 in the pointed Gromov-Hausdorff sense.
\end{theorem}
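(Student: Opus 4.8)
The plan is to combine the harmonic level set expression for the ADM mass from \cite{BKKS} with a quantitative version of the Cheeger–Colding almost splitting argument. First I would fix a sequence with $m(g_l)\to 0$ and work with the asymptotically linear harmonic function $u_l$ on $(M_l,g_l)$ whose existence and asymptotics are guaranteed by the harmonic level set method; here hypotheses \eqref{homology_assumption} and \eqref{scalar_assumption} are what make this function well-behaved, since trivial $H_2$ rules out closed minimal surfaces (no horizons) and ensures the regular level sets of $u_l$ are connected surfaces of the expected topology. The Bray–Kazaras–Khuri–Stern formula gives a mass lower bound of the form $m(g_l)\gtrsim \int_{M_l}\big(|\nabla^2 u_l|^2/|\nabla u_l| + (\tfrac12 R_{g_l} + |\text{second fundamental form terms}|)|\nabla u_l|\big)$, so $m(g_l)\to 0$ forces the Hessian of $u_l$ to be small in an integrated, weighted sense and the level sets to be asymptotically intrinsically flat with small extrinsic curvature. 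Morally, $u_l$ becomes an almost-linear function with almost-parallel gradient, which is exactly the hypothesis of an almost-splitting theorem.

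Next I would upgrade this integral smallness to a genuine Gromov–Hausdorff statement. The key analytic input is the Ricci lower bound in hypothesis (4): it gives Bishop–Gromov volume comparison, a Cheeger–Colding segment inequality, and local Bochner estimates, which together allow one to pass from $\int |\nabla^2 u_l|^2/|\nabla u_l|$ being small to $u_l$ being a uniform $C^0$ (indeed $W^{1,2}$-in-balls) approximation of a distance-like function with almost-parallel gradient on larger and larger regions. Applying the Cheeger–Colding almost splitting theorem then shows that on any fixed ball $B_\rho(p_l)$, once $l$ is large, $(M_l,g_l)$ is Gromov–Hausdorff close to a product $\mathbb{R}\times N_l$ for some metric space $N_l$. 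The remaining step is to identify $N_l$: using the asymptotic flatness \eqref{asymptoticallf}, the level sets of $u_l$ far out are nearly round coordinate planes, so a second harmonic function (an independent asymptotically linear coordinate, obtainable since $H_2=0$ lets us solve the relevant boundary value problems) produces a second almost-parallel gradient field, and iterating the splitting argument shows $N_l$ itself almost splits off another $\mathbb{R}$. Thus $(M_l,g_l,p_l)$ is Gromov–Hausdorff close to $\mathbb{R}^2\times (\text{a line})=\mathbb{R}^3$; the nonnegative scalar curvature plus the vanishing of the mass term in the BKKS formula pins down the scaling so the limit is flat $\mathbb{R}^3$ with the standard metric, and no collapsing occurs because asymptotic flatness gives a uniform lower volume bound on unit balls near infinity which propagates inward via volume comparison.

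The main obstacle I expect is the passage from the \emph{weighted} integral smallness coming from the BKKS formula to \emph{uniform} (unweighted, pointwise-in-$GH$) control: the weight $|\nabla u_l|$ degenerates precisely on the critical set of $u_l$, so one cannot naively quote the standard Cheeger–Colding estimates, which are phrased for functions with nonvanishing gradient or for distance functions. Handling this requires a careful analysis of the critical set of $u_l$—showing it is small (measure zero, and quantitatively thin) and that the level set flow still foliates most of the manifold—combined with cutoff/Moser-iteration arguments adapted to the degenerate weight, in the spirit of the $\epsilon$-regularity in Cheeger–Colding but tailored to harmonic functions rather than Busemann functions. A secondary difficulty is ruling out that the thin "gravity well" regions survive in the limit: the Ricci lower bound is used crucially here, since it is exactly what forces such protrusions to carry definite volume (via Bishop–Gromov) and hence definite mass, contradicting $m(g_l)\to 0$; making this quantitative so that it interfaces cleanly with the splitting argument is the technical heart of the proof.
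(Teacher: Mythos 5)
Your overall strategy---use the mass formula of \cite{BKKS} to get integral smallness of $\nabla^2 u$ and then run Cheeger--Colding-type arguments to conclude Gromov--Hausdorff closeness to $\mathbb{R}^3$---is the right one, but two of the steps you lean on would fail as stated. First, you have misdiagnosed the analytic difficulty in extracting $L^2$ Hessian control from the mass formula. The critical set of $u$ is not the problem: where $|\nabla u|$ is small the weight $1/|\nabla u|$ in $\int |\nabla^2u|^2/|\nabla u|$ is \emph{large}, so the mass formula gives more control there, not less. What you actually need is a uniform \emph{upper} bound $\sup_M|\nabla u|\le C_{\mathrm{g}}$, since only then does $\int_M|\nabla^2u|^2\le C_{\mathrm{g}}\int_M|\nabla^2u|^2/|\nabla u|\le 16\pi C_{\mathrm{g}}\,m(g)$. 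Obtaining this bound uniformly over the class is genuine work: one first bounds $\sup|u|$ on an annulus (using the mass formula again via $|\nabla\sqrt{|\nabla u|}|^2\le |\nabla^2u|^2/4|\nabla u|$, a Sobolev inequality on the asymptotically flat end, and elliptic estimates), and then propagates this to a global gradient bound with the Cheng--Yau estimate, which is one of the places the Ricci lower bound is indispensable. Your proposal never supplies this upper bound, and without it the passage from the weighted to the unweighted Hessian integral does not go through.

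Second, you cannot invoke the Cheeger--Colding almost splitting theorem as a black box: its hypotheses require Ricci curvature that is \emph{almost nonnegative} (a lower bound tending to zero) together with an almost line, whereas here $\mathrm{Ric}\ge-2\kappa g$ with $\kappa>0$ fixed and there is no almost line. One must instead rerun the \emph{proof} of almost splitting, with the $L^2$ Hessian smallness supplied by the mass inequality replacing the Abresch--Gromoll excess estimate; the Ricci bound enters only through Bishop--Gromov, the segment inequality, and Cheng--Yau. Relatedly, your plan to iterate by finding a second harmonic function on the cross-section $N_l$ is not well posed, since $N_l$ is merely a metric space arising as a Gromov--Hausdorff factor and carries no structure on which to do harmonic function theory. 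The workable route is to use all three asymptotically linear harmonic functions $u^1,u^2,u^3$ on $M$ simultaneously, prove a quantitative Pythagorean identity for each coordinate direction directly on $M$ (by integrating the Hessian along geodesics selected via the segment inequality, anchored at far-away points $\Phi^{-1}(\pm Le_i)$ where asymptotic flatness gives pointwise control), and then show that $\mathbf{u}=(u^1,u^2,u^3)$ is an $\varepsilon$-isometry onto an $\varepsilon$-dense subset of a Euclidean ball; the density step requires a separate argument (e.g.\ following the measure-preserving gradient flows of the $u^i$), which your sketch omits entirely. Finally, your closing claim that gravity wells must ``carry definite mass'' by Bishop--Gromov is not substantiated and is not needed once the above program is carried out; and note that $H_2(M,\mathbb{Z})=0$ does not rule out closed minimal surfaces---its role is only to make the mass inequality valid over all of $M$.
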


This result confirms the conjecture of Huisken-Ilmanen \cite[page 430]{HI} under the assumption of uniform asymptotics, trivial second homology, and a uniform lower bound on Ricci curvature, where the removal sets $Z_l$ in their statement are taken to be empty. There is a physical meaning to the Ricci curvature bound in condition $(4)$. The Ricci curvature of a time-symmetric hypersurface in a vacuum spacetime is, up to a sign, the so-called electric part of the spacetime's Weyl curvature, see for instance \cite[Section 3.2]{Jez}. One can interpret, therefore, a Ricci curvature lower bound on $(M,g)$ as a bound on the tidal forces arising from gravitational radiation in the spacetime development of this initial data. The topological hypothesis of trivial second homology is included for technical reasons, namely to allow the applicability of the mass inequality from~\cite{BKKS}, which can run into difficulty when non-separating spheres are present. One noteworthy context where this hypothesis is satisfied is under the ``no horizons" condition which demands that $M_l$ contain no smooth compact minimal surfaces. It should be noted that nonnegative scalar curvature and vanishing second Betti number imply that topologically each $M_l \cong S^3/\Gamma_1 \# \cdots \# S^3/\Gamma_k \# \mathbb{R}^3$ \cite[Corollary 6]{Haslhofer} is a finite connected sum of spherical space forms, with a point removed. In particular, Theorem \ref{t:main} does allows for some degree of nontrivial topology. Furthermore, although in general Gromov-Hausdorff convergence does not imply intrinsic flat convergence, Honda \cite{Honda} and Matveev-Portegies \cite{MatveevPortegies} have shown that this is true in a wide variety of settings, if one assumes a Ricci lower bound. These results together with Theorem \ref{t:main} suggest that $(M_l,g_l, p_l)$ also converges in the pointed intrinsic flat sense to Euclidean space, but we do not pursue this question further.

The main ingredients used to prove  Theorem \ref{t:main}  include the level set proof of the positive mass theorem \cite{BKKS}, and the methods used to prove the Cheeger-Colding almost splitting theorem \cite{CC}. Recall that the almost splitting theorem states that if the Ricci curvature of a manifold is ``almost" nonnegative and there is ``almost" a line, then the manifold ``almost" splits. Thus it can be thought of as a quantitative version of the Cheeger-Gromoll splitting theorem~\cite{CheegerGromoll}. The proof applies the almost Ricci nonnegativity and almost line assumptions to obtain the Abresch-Gromoll excess estimate, from which one shows that a class of harmonic functions must have small Hessian in the  $L^2$ sense. This $L^2$ Hessian estimate then implies that the manifold almost splits in the direction orthogonal to the harmonic level sets. In the current setting, we may view the assumptions of almost nonnegative Ricci and existence of an almost line as being replaced by the mass inequality from~\cite{BKKS}. This inequality, when the mass is small and scalar curvature is nonnegative, essentially gives the desired $L^2$ Hessian control of any asymptotically linear harmonic function. By applying the almost splitting argument in three coordinate directions, we obtain convergence to  Euclidean space.
Throughout this process, we require a Ricci lower bound to retain applicability of the Bishop-Gromov inequality,
 the Cheeger-Colding segment inequality \cite{CC}, and  the Cheng-Yau gradient estimate \cite{ChengYau}, which we review in the next section.

Theorem \ref{t:main} may be generalized by relaxing the nonnegative scalar curvature assumption. We relax the assumption in two different ways. First, we can allow the lower bound on scalar curvature to be negative as long as it has a particular divergence structure, and second, we only need the desired lower bound to ``almost'' hold.

\begin{theorem}\label{t:main1}
Theorem \ref{t:main} remains true if the nonnegative scalar curvature assumption~\eqref{scalar_assumption} is replaced by the following weaker assumption: There exist vector fields $X_l \in C^1_{-2-\delta_l}(M_l)$, for some $\delta_l>0$, and nonnegative functions $\psi_l \in L^1(M_l)$ such that
\begin{equation}\label{alaifiaw3}
R_{g_l}\geq |X_l|_{g_l}^2 +\div_{g_l} X_l -\psi_l,
\end{equation}
where $\psi_l$ satisfies the following two conditions:
\begin{itemize}
\item
$
\| \psi_l \|_{L^{1}(M_l)}\rightarrow 0$, and
\item $\psi_{l}$ vanishes on $\Phi_l^{-1}(\mathbb{R}^3\setminus \bar{B}^{\mathbb{E}}_{\hat{r}}(0))$, for some uniform $\hat{r}$ independent of~$l$.
\end{itemize}

\noindent Furthermore, Theorem \ref{t:main} remains true if the vanishing second homology assumption~\eqref{homology_assumption} is replaced by the assumption that $R_{g_l}$ is bounded above by some constant independent of $l$.
\end{theorem}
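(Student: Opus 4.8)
We outline a proof. The plan is to revisit the argument for Theorem~\ref{t:main} and observe that the hypotheses \eqref{scalar_assumption} and \eqref{homology_assumption} enter it \emph{only} through the mass inequality of \cite{BKKS}, applied to the asymptotically linear harmonic functions $u_l$ on $M_l$ (one for each asymptotic coordinate direction), which we write schematically as
\begin{equation*}
16\pi\,m(g_l)\;\ge\;\int_{M_l}\!\left(\frac{|\nabla^2 u_l|^2}{|\nabla u_l|}+R_{g_l}\,|\nabla u_l|\right)dV_{g_l}\;+\;\mathcal T_l ,
\end{equation*}
where $\mathcal T_l$ collects the Gauss--Bonnet contributions of the level sets $\{u_l=t\}$ and is nonnegative when $H_2(M_l,\mathbb{Z})=0$, because that hypothesis forces every closed level-set component to bound a region, on which $u_l$ would then be constant -- impossible for a regular level set. (The level sets have a single noncompact component, since $M_l$ has one end whose level sets are asymptotically planar, so closed components are the only issue.) Under \eqref{scalar_assumption} the middle integrand is nonnegative, and one concludes $\int_{M_l}\frac{|\nabla^2 u_l|^2}{|\nabla u_l|}\,dV_{g_l}\to 0$; this is the sole curvature-and-topology input into everything that follows -- the Cheeger--Colding-type almost-splitting argument \cite{CC} in three coordinate directions. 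It therefore suffices to recover $\int_{M_l}\frac{|\nabla^2 u_l|^2}{|\nabla u_l|}\,dV_{g_l}\to 0$ under the weaker hypotheses; the rest of the proof of Theorem~\ref{t:main} then applies verbatim.

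\emph{Relaxing the scalar curvature bound.} Keeping \eqref{homology_assumption} (so $\mathcal T_l\ge0$), substitute \eqref{alaifiaw3} into the middle term and integrate the divergence by parts: $\int_{M_l}(\div_{g_l}X_l)|\nabla u_l|\,dV_{g_l}=-\int_{M_l}\langle X_l,\nabla|\nabla u_l|\rangle\,dV_{g_l}$, the flux at infinity vanishing since $X_l\in C^1_{-2-\delta_l}(M_l)$ while $|\nabla u_l|$ is bounded with $\nabla u_l$ asymptotically constant, and the negligible critical set of $u_l$ being handled as in \cite{BKKS}. Because $\big|\langle X_l,\nabla|\nabla u_l|\rangle\big|\le|X_l|_{g_l}|\nabla^2 u_l|$, the arithmetic--geometric mean inequality absorbs the $|X_l|_{g_l}^2$ term entirely into the Hessian term and yields
\begin{equation*}
\int_{M_l}R_{g_l}\,|\nabla u_l|\,dV_{g_l}\;\ge\;-\tfrac14\int_{M_l}\frac{|\nabla^2 u_l|^2}{|\nabla u_l|}\,dV_{g_l}\;-\;\int_{M_l}\psi_l\,|\nabla u_l|\,dV_{g_l},
\end{equation*}
so the mass inequality gives $\tfrac34\int_{M_l}\frac{|\nabla^2 u_l|^2}{|\nabla u_l|}\,dV_{g_l}\le 16\pi\,m(g_l)+\int_{M_l}\psi_l|\nabla u_l|\,dV_{g_l}$. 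The error term is harmless: $\psi_l$ is supported in the compact set $\mathcal K_l\cup\Phi_l^{-1}\big(\bar B^{\mathbb E}_{\hat r}(0)\setminus\bar B^{\mathbb E}_1(0)\big)$, on which $u_l$ has oscillation bounded by a constant depending only on $\hat r$, $b$, $\tau$ -- by the maximum principle together with the standard asymptotics of $u_l$ -- so the Cheng--Yau gradient estimate \cite{ChengYau} on metric balls of fixed radius, applicable thanks to hypothesis $(4)$, bounds $|\nabla u_l|$ there uniformly in $l$. Hence $\int_{M_l}\psi_l|\nabla u_l|\,dV_{g_l}\le C\|\psi_l\|_{L^1(M_l)}\to0$, and the desired conclusion follows.

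\emph{Relaxing the homology assumption.} Now keep $R_{g_l}\ge0$ (the divergence form is handled identically) but replace $H_2(M_l,\mathbb{Z})=0$ by a uniform bound $R_{g_l}\le\Lambda$. The only effect is that $\mathcal T_l$ may become negative, the obstruction being closed components $C$ -- necessarily spheres, these being the closed surfaces of positive Euler characteristic -- of the regular level sets $\Sigma_t=u_l^{-1}(t)$, whose net contribution is controlled by $E_l:=\sum_{C}\int_{I_C}\chi(C_t)\,dt$, where $I_C$ is the range of levels over which the component $C_t$ persists. One estimates $E_l$ by running Gauss--Bonnet and the Gauss equation, $2\pi\chi(C_t)=\int_{C_t}\big(\tfrac12R_{g_l}-\Ric_{g_l}(\nu,\nu)+\det A_{C_t}\big)\,d\sigma$, against $R_{g_l}\le\Lambda$, $\Ric_{g_l}(\nu,\nu)\ge-2\kappa$, $\det A_{C_t}\le\tfrac14H_{C_t}^2$ and the pointwise bound $|H_{C_t}|=|\nabla^2 u_l(\nu,\nu)|/|\nabla u_l|\le|\nabla^2 u_l|/|\nabla u_l|$, converting level-set integrals to region integrals via the co-area formula. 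This produces a dichotomy: a closed spherical component of area below a threshold $a_0(\Lambda,\kappa)$ forces a definite amount of $\int_{C_t}H_{C_t}^2\,d\sigma$, whence by co-area its total contribution is dominated by a small multiple of $\int_{M_l}\frac{|\nabla^2 u_l|^2}{|\nabla u_l|}\,dV_{g_l}$ and is absorbed; a closed spherical component of area exceeding $a_0$ whose mean curvature is not uniformly large is an almost-minimal closed surface, which cannot occur for $l$ large because the region containing it is Gromov--Hausdorff close to Euclidean space, where no closed minimal surface exists. Thus $E_l\to0$, the mass inequality again yields $\int_{M_l}\frac{|\nabla^2 u_l|^2}{|\nabla u_l|}\,dV_{g_l}\to0$, and the proof of Theorem~\ref{t:main} carries over. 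The principal obstacle is exactly this last part: making the bound on $E_l$ genuinely uniform in $l$, excluding closed level-set components of non-small area via a quantitative nonexistence of nearly minimal closed surfaces in nearly Euclidean regions, and arranging the iteration so that the Gromov--Hausdorff approximation and the control on $E_l$ are obtained in the correct order and the Hessian integral is truly absorbed rather than merely estimated -- this is where the substantive work lies. The integration by parts, the arithmetic--geometric absorption, and the Cheng--Yau control of the $\psi_l$-error in the first relaxation are, by contrast, routine.
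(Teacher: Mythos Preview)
Your treatment of the first relaxation follows the same route as the paper: plug \eqref{alaifiaw3} into the mass inequality, integrate the divergence by parts using the decay of $X_l$, and absorb the cross term via Young's inequality. There is, however, a circularity you gloss over. You assert that $u_l$ has oscillation on the support of $\psi_l$ bounded in terms of $\hat r,b,\tau$ alone, invoking ``standard asymptotics.'' But the only asymptotic information you have is $u_l-x^i\in C^{2,\alpha}_{1-\tau}$ \emph{qualitatively}; a uniform-in-$l$ bound on this norm is exactly the content of Proposition~\ref{prop:sup} (and its weighted consequence, Lemma~\ref{prop:Bartnik}), whose proof already uses the mass inequality and hence the scalar curvature hypothesis you are trying to weaken. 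The paper closes this loop by an absorption argument (Proposition~\ref{prop:sup2}): one first derives
\[
\sup_{M_{r_0}}|u|\le C_3 + C_4\Big(\sup_{M_{\hat r}}|\nabla u|\Big)\,\hat\varepsilon,
\]
then uses Cheng--Yau to bound $\sup_{M_{\hat r}}|\nabla u|\le C_5\sup_{M_{r_0}}|u|$, and finally takes $\hat\varepsilon$ small enough to absorb. Your sketch is fixable along exactly these lines, but as written the step is unjustified. Note also that the downstream Cheeger--Colding argument needs $\int|\nabla^2 u_l|^2\to 0$, not the $|\nabla u_l|^{-1}$-weighted version, so you must also propagate the global gradient bound---again requiring the sup bound.

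For the second relaxation your approach diverges sharply from the paper's, and the difficulty you flag at the end is real and unresolved. Your plan is to control the Gauss--Bonnet defect $E_l$ from closed level-set components by a dichotomy on their area, absorbing small ones into the Hessian integral and ruling out large ones as ``almost-minimal surfaces in a nearly Euclidean region.'' But the nearly-Euclidean conclusion is precisely what you are trying to prove, so the large-area case is circular; and you give no mechanism to decouple the iteration. The paper sidesteps all of this: it shows that the upper bound $R_{g_l}\le C_0$ (together with $R_{g_l}\ge 0$, $\Ric_{g_l}\ge -2\kappa g_l$, and $m(g_l)\to 0$) forces $H_2(M_l,\mathbb{Z})=0$ for all large $l$, reducing to Theorem~\ref{t:main} outright. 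The argument is short: nontrivial $H_2$ would produce an outermost minimal $2$-sphere $\Sigma_l$; Gauss--Bonnet and the traced Gauss equation give $4\pi\le |\Sigma_l|\bigl(\tfrac12 C_0+2\kappa\bigr)$, so $|\Sigma_l|$ is bounded below; but the Riemannian Penrose inequality forces $|\Sigma_l|\to 0$ since $m(g_l)\to 0$, a contradiction. You missed this use of the Penrose inequality, which replaces your entire level-set analysis by a three-line contradiction.
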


Here, $C^1_{-2-\delta_l}$ refers to a weighted decay space, see footnote~\ref{foot}. Note that the assumptions on $\psi_l$ say that the inequality $R_{g_l}\geq |X_l|_{g_l}^2 +\div_{g_l} X_l$ holds modulo a negative term that is small in the $L^1$ sense, and is also uniformly compactly supported. This inequality without the $\psi_l$ term  is reminiscent of the form that the scalar curvature takes in the Jang deformation procedure found in Schoen and Yau's proof of the positive energy theorem for initial data sets~\cite{SchoenYauII}. In that proof, the deformed Jang metrics may be conformally transformed to zero scalar curvature. Following this approach, one may try to prove Gromov-Hausdorff stability by obtaining control on the conformal factors, but the required estimates remain elusive. Instead, we again use the mass inequality \eqref{t:massest} and show that $L^2$ smallness of the Hessian is still implied by the relaxed hypotheses. We point out that the lack of a nonnegative scalar curvature hypothesis in Theorem \ref{t:main1} seems to be a novel feature among the known positive mass stability results.

This paper is organized as follows: The next section is dedicated to recalling background results for metrics with Ricci lower bounds, as well as the mass inequality for asymptotically linear harmonic functions. In Section~\ref{sec3} we prove a uniform supremum bound for these functions over an annular region, which then leads us to uniform estimates for their gradients, and finally, uniform smallness of their $L^2$ Hessians in terms of the mass. In Section \ref{sec4}, we use the Cheeger-Colding segment inequality together with the estimates from Section \ref{sec3} to prove that the Hessians are controlled along certain geodesics which mimic the ones used in the proof of the Cheeger-Colding almost splitting theorem. In Section \ref{sec5} these results are applied to obtain an almost Pythagorean identity, while in Section \ref{sec6}, we establish Gromov-Hausdorff approximations and prove Theorem \ref{t:main}. Finally, we prove Theorem \ref{t:main1} in Section~\ref{sec7}.


\section{Background}
\label{sec2} \setcounter{equation}{0}
\setcounter{section}{2}

Throughout this article, constants depending on only parameters $a_1,a_2,\ldots$ will be denoted by $C(a_1,a_2,\ldots)$. Recall the following standard facts about Riemannian manifolds with Ricci curvature bounded below.
\begin{theorem}[Bishop-Gromov relative volume comparison] \label{thm:BG}
Let $(M, g)$ be a complete Riemannian manifold with $\mathrm{Ric}(g)\geq -2\kappa g$, where $\kappa>0$. Let $q\in M$, and let $0<r<s$. Then
\begin{equation}\label{BiGr}
\frac{|B_r(q)|}{|B^{-\kappa}_{r}|}\geq \frac{|B_s(q)|}{|B^{-\kappa}_{s}|},
\end{equation}
where $|B^{-\kappa}_{r}|$ denotes the volume of a geodesic ball of radius $r$ in the hyperbolic $3$-space with constant curvature $-\kappa$.
\end{theorem}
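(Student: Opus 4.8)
The plan is to reduce the statement to a one–variable monotonicity argument by passing to geodesic polar coordinates centered at $q$. Write $U_qM\subset T_qM$ for the unit sphere, $\gamma_\theta(t)=\exp_q(t\theta)$, and let $c(\theta)\in(0,\infty]$ be the distance to the cut locus in the direction $\theta$. On the star-shaped segment domain the Riemannian volume form is $dV=\mathcal A(t,\theta)\,dt\,d\theta$, where $\mathcal A(t,\theta)$ is the Jacobian of $\exp_q$; extend $\mathcal A(t,\theta)=0$ for $t\ge c(\theta)$, so that $|B_r(q)|=\int_{U_qM}\int_0^r\mathcal A(t,\theta)\,dt\,d\theta$. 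Let $\mathrm s_\kappa(t)=\kappa^{-1/2}\sinh(\sqrt\kappa\,t)$, the solution of $\mathrm s_\kappa''=\kappa\,\mathrm s_\kappa$ with $\mathrm s_\kappa(0)=0$, $\mathrm s_\kappa'(0)=1$; then the radial density in $\mathbb H^3(-\kappa)$ is $\mathrm s_\kappa(t)^2$ and $|B_r^{-\kappa}|=4\pi\int_0^r\mathrm s_\kappa(t)^2\,dt$.

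The key pointwise estimate is that, for each fixed $\theta$, the function $t\mapsto \mathcal A(t,\theta)/\mathrm s_\kappa(t)^2$ is non-increasing on $(0,c(\theta))$. Along $\gamma_\theta$ the mean curvature $H:=\Delta r$ of the geodesic spheres, with $r=\dist(q,\cdot)$, equals $\partial_t\log\mathcal A(t,\theta)$, and the radial curvature (Riccati) identity $\partial_t H=-|\operatorname{Hess}r|^2-\Ric(\partial_r,\partial_r)$, combined with the Cauchy--Schwarz bound $|\operatorname{Hess}r|^2\ge\tfrac12 H^2$ (since $\operatorname{Hess}r$ restricts to a symmetric form of trace $H$ on the two-dimensional bundle $\partial_r^\perp$) and the hypothesis $\Ric\ge-2\kappa g$, gives
\[
\partial_t H\ \le\ -\tfrac12 H^2+2\kappa .
\]
The model density has logarithmic derivative $H_\kappa:=2\mathrm s_\kappa'/\mathrm s_\kappa$, which satisfies $\partial_t H_\kappa=-\tfrac12 H_\kappa^2+2\kappa$ with equality, and both $H$ and $H_\kappa$ are asymptotic to $2/t$ as $t\to0^+$. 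A standard ODE comparison then yields $H(t,\theta)\le H_\kappa(t)$ for $t\in(0,c(\theta))$, so $\partial_t\log\big(\mathcal A(t,\theta)/\mathrm s_\kappa(t)^2\big)=H-H_\kappa\le0$; integrating from a point near $0$, where $\mathcal A/\mathrm s_\kappa^2\to1$, gives the monotonicity. Since $\mathcal A(\cdot,\theta)$ drops to $0$ at $c(\theta)$ and remains $0$, the ratio $t\mapsto\mathcal A(t,\theta)/\mathrm s_\kappa(t)^2$ is actually non-increasing on all of $(0,\infty)$.

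Averaging over $\theta$, the density $\mathcal F(t):=\int_{U_qM}\mathcal A(t,\theta)\,d\theta$ has the property that $\mathcal F(t)/\mathrm s_\kappa(t)^2=\tfrac1{4\pi}\int_{U_qM}\mathcal A(t,\theta)/\mathrm s_\kappa(t)^2\,d\theta$ is non-increasing, being an average of non-increasing functions. The theorem then follows from the elementary fact that if $f\ge0$ and $g>0$ are locally integrable on $(0,\infty)$ with $f/g$ non-increasing, then $r\mapsto\big(\int_0^r f\big)\big/\big(\int_0^r g\big)$ is non-increasing: differentiating, the sign of the derivative is that of $\tfrac{f(r)}{g(r)}-\tfrac{\int_0^r f}{\int_0^r g}$, and $\tfrac{\int_0^r f}{\int_0^r g}$ is a $g$-weighted average of $f/g$ over $[0,r]$, hence $\ge (f/g)(r)$. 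Taking $f=\mathcal F$ and $g=4\pi\,\mathrm s_\kappa^2$ shows $|B_r(q)|/|B_r^{-\kappa}|$ is non-increasing in $r$, which is \eqref{BiGr}.

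The one delicate point is the bookkeeping at the cut locus: one needs that $\exp_q$ is a diffeomorphism from the open segment domain onto the complement of the measure-zero cut locus (so the polar integration formula for $|B_r(q)|$ is valid), and that extending $\mathcal A$ by zero past $c(\theta)$ cannot spoil monotonicity of the ratio (it cannot, since at $c(\theta)$ the ratio only jumps downward). Everything else is the Riccati comparison along a fixed geodesic, which is routine. As this is entirely classical, I would cite a standard reference, e.g. \cite{CC}, for the cut-locus details rather than reprove them.
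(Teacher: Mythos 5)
Your argument is correct and is precisely the classical polar-coordinate/Riccati-comparison proof; the paper gives no proof of its own here, simply citing \cite[Chapter 9.1]{Petersen}, where this same argument appears. (The only blemish is the stray factor of $\tfrac{1}{4\pi}$ in your identity for $\mathcal{F}(t)/\mathrm{s}_\kappa(t)^2$, which is harmless for the monotonicity conclusion.)
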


See, for example, \cite[Chapter 9.1]{Petersen} for a proof.
Our arguments in later sections depend heavily on the following result of Cheeger and Colding~\cite{CC}, which is used to transform integral estimates into estimates that hold along certain minimizing geodesics.

\begin{theorem}[Cheeger-Colding segment inequality]\label{segmentlemma}
Let $(M, g)$ be a complete Riemannian manifold with $\mathrm{Ric}(g)\geq -2\kappa g$, where $\kappa>0$.
For $r>0$, let $f:B_{2r}(p)\rightarrow [0,\infty)$, and consider domains $\Omega_1,\Omega_2\subset B_{r}(p)$ with $\xi_1\in\Omega_1$ and $\xi_2\in\Omega_2$. If we define
\begin{equation}
{\mathcal F}_{f} (\xi_1,\xi_2) := \sup_{\gamma} \int_0^{d(\xi_1,\xi_2)} f(\gamma(s))\,ds,
\end{equation}
where the supremum is taken over minimizing unit speed geodesics $\gamma$ joining $\xi_1$ to $\xi_2$, then there exists a constant $C_{\mathrm{s}}(r,\kappa)$ such that
\begin{equation}
\label{abc7}
\int_{\Omega_1\times \Omega_2} {\mathcal F}_{f} (\xi_1, \xi_2) \,d\xi_1\, d\xi_2 \le
C_{\mathrm{s}}(r,\kappa)\left (|\Omega_1|
+|\Omega_2|\right )
\int_{B_{2r}(p)} f\, dV,
\end{equation}
where $d\xi_1\,d\xi_2$ is the Riemannian product measure on $\Omega_1\times \Omega_2$ and $dV$  denotes Riemannian volume measure on $M$.
\end{theorem}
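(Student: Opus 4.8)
The plan is to reproduce the classical argument of Cheeger and Colding from \cite{CC}: reduce the product integral to a one–dimensional integral along geodesics issuing from $\xi_2$, and then convert it back into a volume integral over $B_{2r}(p)$ by reinserting the Jacobian of the exponential map, which Bishop--Gromov comparison controls. The essential device is to cut every minimizing geodesic at its midpoint. Concretely, for a minimizing unit-speed geodesic $\gamma$ from $\xi_1$ to $\xi_2$ of length $\ell=d(\xi_1,\xi_2)$, I split $\int_0^{\ell}f(\gamma)\,ds$ into the pieces over $[0,\ell/2]$ and $[\ell/2,\ell]$; taking suprema over $\gamma$, and reversing $\gamma$ in the second piece, gives $\mathcal F_f(\xi_1,\xi_2)\le\mathcal G(\xi_1,\xi_2)+\mathcal G(\xi_2,\xi_1)$ where $\mathcal G(\xi_1,\xi_2):=\sup_{\gamma}\int_0^{\ell/2}f(\gamma(s))\,ds$. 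Since $\xi_1,\xi_2\in B_r(p)$ forces $\ell<2r$, a point $\gamma(s)$ with $s\le\ell/2$ obeys $d(\gamma(s),p)\le s+d(\xi_1,p)<\ell/2+r<2r$, so $\mathcal G$ samples $f$ only on $B_{2r}(p)$. It thus suffices to bound $\int_{\Omega_1\times\Omega_2}\mathcal G(\xi_1,\xi_2)\,d\xi_1\,d\xi_2$ by $2r\,C(r,\kappa)|\Omega_2|\int_{B_{2r}(p)}f\,dV$, since adding the analogous bound with $\Omega_1$ and $\Omega_2$ interchanged gives \eqref{abc7} with $C_{\mathrm s}(r,\kappa)=2r\,C(r,\kappa)$.

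To prove that one-sided bound, fix $\xi_2\in\Omega_2$. For a.e.\ $\xi_1$ there is a unique minimizing geodesic to $\xi_2$; parametrizing it from $\xi_2$ and passing to geodesic polar coordinates $(\rho,\omega)$ about $\xi_2$, one has $\xi_1=\exp_{\xi_2}(\ell\omega)$, $d\xi_1=\mathcal A(\rho,\omega)\,d\rho\,d\omega$ on the segment domain (with $\mathcal A(\cdot,\omega)$ the area density of geodesic spheres about $\xi_2$), and $\mathcal G(\xi_1,\xi_2)=\int_{\ell/2}^{\ell}f(\exp_{\xi_2}(u\omega))\,du$. Using $\Omega_1\subset B_{2r}(\xi_2)$ and Fubini in $\rho,u$ over $\{\rho/2\le u\le\rho<2r\}$,
\[
\int_{\Omega_1}\mathcal G(\xi_1,\xi_2)\,d\xi_1\ \le\ \int_{S^2}\int_0^{2r}f\big(\exp_{\xi_2}(u\omega)\big)\Big(\int_{I(u,\omega)}\mathcal A(\rho,\omega)\,d\rho\Big)du\,d\omega,
\]
where $I(u,\omega):=\{\rho\in[u,2u]:\rho<2r,\ \exp_{\xi_2}(\rho\omega)\in\Omega_1\}$. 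Now I invoke the infinitesimal form of Bishop--Gromov comparison underlying Theorem \ref{thm:BG}: $\rho\mapsto\mathcal A(\rho,\omega)/\mathcal A_{-\kappa}(\rho)$ is nonincreasing along the segment domain, where $\mathcal A_{-\kappa}$ is the corresponding area density in hyperbolic $3$-space of constant curvature $-\kappa$. For $\rho\in I(u,\omega)$ we have $u\ge\rho/2$, hence
\[
\mathcal A(\rho,\omega)\ \le\ \frac{\mathcal A_{-\kappa}(\rho)}{\mathcal A_{-\kappa}(u)}\,\mathcal A(u,\omega)\ \le\ \frac{\mathcal A_{-\kappa}(\rho)}{\mathcal A_{-\kappa}(\rho/2)}\,\mathcal A(u,\omega)\ \le\ C(r,\kappa)\,\mathcal A(u,\omega),
\]
with $C(r,\kappa):=\sup_{0<\rho<2r}\mathcal A_{-\kappa}(\rho)/\mathcal A_{-\kappa}(\rho/2)<\infty$; since $|I(u,\omega)|\le u<2r$, the inner integral is at most $2r\,C(r,\kappa)\,\mathcal A(u,\omega)$. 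Moreover, if $I(u,\omega)\neq\emptyset$, then some $\rho\in[u,2u]$ has $\exp_{\xi_2}(\rho\omega)\in\Omega_1\subset B_r(p)$, and $\rho-u\le\rho/2<r$ forces $d(\exp_{\xi_2}(u\omega),p)\le(\rho-u)+r<2r$. Reversing the change of variables, the displayed double integral is therefore at most $2r\,C(r,\kappa)\int_{B_{2r}(p)}f\,dV$, and integrating in $\xi_2\in\Omega_2$ finishes the proof.

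The one genuinely delicate move is the midpoint split. It plays two roles: it keeps the sampled values of $f$ inside $B_{2r}(p)$, and --- crucially --- it forces $u\ge\rho/2$, which keeps the comparison ratio $\mathcal A_{-\kappa}(\rho)/\mathcal A_{-\kappa}(u)$ bounded in the last step; without it, the Jacobian $\mathcal A(u,\omega)$ degenerates as $u\to0$ and one cannot reconstitute a volume integral from the integral along geodesics. The remaining points --- measurability of $\mathcal F_f$ and $\mathcal G$, the a.e.\ uniqueness of minimizing geodesics, and cut-locus bookkeeping in the polar change of variables --- are routine.
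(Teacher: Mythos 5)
Your argument is correct: it is the standard Cheeger--Colding proof, with the midpoint split, the polar-coordinate change of variables about the fixed endpoint, and the infinitesimal Bishop--Gromov monotonicity of $\mathcal A(\rho,\omega)/\mathcal A_{-\kappa}(\rho)$ all deployed exactly as in \cite{CC}. The paper itself states this theorem as background and cites \cite{CC} without proof, so there is nothing further to compare; your write-up faithfully reconstructs the argument of the cited source, including the one genuinely essential point (that $u\ge\rho/2$ keeps the comparison ratio bounded).
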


To see how this leads to estimates along particular geodesics, we consider the following easy \emph{mean value-type inequality}: for any nonnegative $L^1$ function $F$ on a domain $\Omega$, there must exist a point $x^*\in\Omega$ such that $F(x^*)$ is less than or equal to twice the average value over $\Omega$, or
\begin{equation}\label{MV}
F(x^*) \le \frac{2}{|\Omega|} \int_\Omega F\,dV.
\end{equation}
The Markov inequality puts a positive lower bound on the measure of the set of such $x^*$, but we will not need this. If we combine this with the segment inequality, it tells us that there exist $x_1^*\in \Omega_1$ and  $x_2^*\in \Omega_2$ such that any minimizing geodesic $\gamma$ from $x_1^*$ to $x_2^*$ satisfies
\begin{equation}
 \int_0^{d(x_1^*,x_2^*)} f(\gamma(s))\,ds
 \le  2C_{\mathrm{s}}(r,\kappa)\left (\frac{1}{|\Omega_1|}
+\frac{1}{|\Omega_2|}\right )
\int_{B_{2r}(p)} f\, dV.
\end{equation}
Given an integral bound on $f$ and arbitrary points $x_1, x_2\in B_r(p)$, we may not be able to control the integral of $f$ along the minimizing geodesics from $x_1$ to $x_2$, but if we chose $\Omega_1$, $\Omega_2$ to be small balls, then the segment inequality allows us to find \emph{nearby} points $x_1^*$, $x_2^*$ for which we do have that control. Of course, if $\Omega_1$, $\Omega_2$ are small balls, then the terms $\frac{1}{|\Omega_1|}$ and
$\frac{1}{|\Omega_2|}$ become large, but this issue can be mitigated by Bishop-Gromov relative volume comparison. This is roughly the philosophy behind the use of the segment inequality. As we will see, the actual application in this paper is a bit more complicated.

The following is a direct consequence of the Cheng-Yau gradient estimate \cite[Corollary 3.2]{SYRedB} for harmonic functions.
\begin{theorem}[Cheng-Yau gradient estimate]\label{t:CY}
Let $(M, g)$ be a complete Riemannian manifold with $\mathrm{Ric}(g)\geq -2\kappa g$, where $\kappa>0$. Let $q\in M$, and let $0<r<s$. Then there exists a constant $C(r,s,\kappa)$ such that
\begin{equation}\label{eqA.2}
\sup_{B_{r}(q)}|\nabla u|\leq C(r,s,\kappa)\sup_{B_s(q)}|u|,
\end{equation}
for any $g$-harmonic function $u$ on $B_s(q)$.
\end{theorem}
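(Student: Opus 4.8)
\emph{Proof sketch.} The plan is to reduce the statement to the classical Cheng--Yau gradient estimate for \emph{positive} harmonic functions, i.e.\ the cited Corollary~3.2 of \cite{SYRedB}: if $v>0$ is harmonic on a geodesic ball $B_{2a}(x)$ in a Riemannian manifold with $\mathrm{Ric}\ge-(n-1)Kg$ on that ball, then $|\nabla\log v|(x)\le c_n\big(a^{-1}+\sqrt{K}\big)$. All that remains is to remove the positivity assumption, which is accomplished by translating $u$ by a constant.

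First I would dispose of two trivial cases: if $\sup_{B_s(q)}|u|=+\infty$ there is nothing to prove, and if $\sup_{B_s(q)}|u|=0$ then $u\equiv 0$ on $B_s(q)$ by the maximum principle, so the estimate holds with any constant. Hence assume $0<S:=\sup_{B_s(q)}|u|<\infty$ and put $v:=u+2S$ on $B_s(q)$. Since $u$ is harmonic so is $v$, and because $|u|\le S$ on $B_s(q)$ one has $S\le v\le 3S$ there; in particular $v$ is a positive harmonic function on $B_s(q)$, bounded above by $3S$, with $\nabla v=\nabla u$.

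Next I would localize the positive-function estimate. Fix any $x\in B_r(q)$; by the triangle inequality $B_{s-r}(x)\subset B_s(q)$, so $v$ is positive and harmonic on $B_{s-r}(x)$. Since $n=3$ here and $\mathrm{Ric}(g)\ge-2\kappa g$ (so that $K=\kappa$), applying the cited estimate on this ball with $2a=s-r$ gives $|\nabla\log v|(x)\le c_3\big(\tfrac{2}{s-r}+\sqrt{\kappa}\big)$. Multiplying through by $v(x)\le 3S$ and using $|\nabla u|(x)=|\nabla v|(x)=v(x)\,|\nabla\log v|(x)$ yields $|\nabla u|(x)\le 3\,c_3\big(\tfrac{2}{s-r}+\sqrt{\kappa}\big)S$. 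Taking the supremum over $x\in B_r(q)$ proves the theorem with $C(r,s,\kappa)=3\,c_3\big(\tfrac{2}{s-r}+\sqrt{\kappa}\big)$.

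Since the substantive analytic content is entirely contained in the cited Cheng--Yau estimate, no serious obstacle arises; the only point requiring care is the bookkeeping of domains, so that the positivity of $v$ and the bound $v\le 3S$ both hold precisely on the subballs $B_{s-r}(x)$ where the gradient estimate is invoked --- which is automatic once one shifts $u$ by $2\sup_{B_s(q)}|u|$. (Alternatively, one could establish the local Yau gradient inequality directly from the Bochner formula with a cutoff and avoid the shift altogether, but the reduction above is the shortest route.)
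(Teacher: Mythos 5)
Your reduction is correct and is precisely the argument the paper has in mind when it calls Theorem~\ref{t:CY} a ``direct consequence'' of \cite[Corollary 3.2]{SYRedB}: shift $u$ by $2\sup_{B_s(q)}|u|$ to obtain a positive harmonic function, apply the logarithmic gradient estimate on balls $B_{s-r}(x)$ centered at each $x\in B_r(q)$, and multiply back by the upper bound $3S$. The domain bookkeeping is handled adequately (at worst one shrinks the inner radius slightly so the relevant balls are compactly contained in $B_s(q)$), so no gap remains.
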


The main recent innovation used in our proof of Theorem~\ref{t:main} is the mass inequality due to Bray, Stern, and the first two authors \cite[Theorem 1.2]{BKKS}.
Before stating this result, we briefly introduce the concept of asymptotically linear harmonic functions on a complete $(b,\tau)$-asymptotically flat manifold $(M,g)$. For $i=1,2,3$, let $x^i$ be the $i$-th component of the asymptotically flat coordinate chart $\Phi$ described in Definition~\ref{def:AF}, and then arbitrarily extend it to a smooth function defined on all of~$M$. Asymptotic flatness implies that $\Delta_g x^i$ lies in the weighted\footnote{\label{foot} See, for example,~\cite[Appendix A]{Lee} for a definition of weighted H\"{o}lder spaces. The space $C^{k,\alpha}_s$ roughly means functions that are $O(|x|^{s})$ and whose derivatives decay one order faster for each derivative up to ``$k+\alpha$" derivatives.} H\"{o}lder space $C^{0,\alpha}_{-1-\tau}(M)$, for any
$\alpha\in(0,1)$. It is well-known that
for $\tau\in(\tfrac{1}{2},1)$, the map
\begin{equation}
\Delta_g : C^{2,\alpha}_{1-\tau}(M)\longrightarrow C^{0,\alpha}_{-1-\tau}(M)
\end{equation}
is surjective, with kernel consisting of constant functions; for example, see~\cite[Theorem A.40]{Lee}. Therefore there exists $v\in C^{2,\alpha}_{1-\tau}(M)$ such that $\Delta_g v = -\Delta_g x^i$. Setting $u^i:= x^i +v$, we see that there exists $u^i$ such that
\begin{equation}\label{hasym}
\Delta_g {u}^i=0,\quad\quad\quad
{u}^i- x^i \in C^{2,\alpha}_{1-\tau}(M).
\end{equation}
We will refer to $u^i$ as an \emph{asymptotically linear harmonic function}, asymptotic to the coordinate function $x^i$. Observe for each $i$, $u^i$ is uniquely defined up to the addition of constants. The functions $u^1, u^2, u^3$ are sometimes called \emph{harmonic coordinates}, though it is important to note that they need only form a coordinate system outside some large compact set.

\begin{theorem}[Mass inequality for asymptotically linear harmonic functions~\cite{BKKS}] \label{t:mass}
Let $(M,g)$ be an orientable complete asymptotically flat manifold with trivial second homology, that is, $H_2(M,\mathbb{Z})=0$, and let $u$ be an asymptotically linear harmonic function as described above. Then the ADM mass satisfies
\begin{equation}\label{t:massest}
m(g) \geq \frac{1}{16\pi} \int_{M}\left(\frac{|\nabla^2 u|^2}{|\nabla u|}+R_g |\nabla u|\right) dV,
\end{equation}
where $\nabla^2 u$ denotes the Hessian of $u$.
\end{theorem}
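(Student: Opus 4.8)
The plan is to reprove the mass inequality by the harmonic level‑set method of Stern, as developed in \cite{BKKS}. After composing the asymptotic chart $\Phi$ with a rotation of $\mathbb{R}^3$ we may assume $u$ is asymptotic to $x^1$, so that $u$ is proper and, by Sard's theorem, almost every $t\in\mathbb{R}$ is a regular value; for such $t$ the level set $\Sigma_t:=u^{-1}(t)$ is a properly embedded surface with unit normal $\nu=\nabla u/|\nabla u|$. Write $\varphi:=|\nabla u|$, and let $H$ and $A$ be the mean curvature and second fundamental form of $\Sigma_t$ with respect to $\nu$ (with $H=\operatorname{tr}A$). The heart of the matter is a pointwise identity valid at regular points of $u$, obtained by combining three standard computations: (i) the Bochner formula $\tfrac12\Delta_g\varphi^2=|\nabla^2u|^2+\operatorname{Ric}(\nabla u,\nabla u)$, in which the term $\langle\nabla u,\nabla\Delta_g u\rangle$ drops because $\Delta_g u=0$; (ii) the decomposition of $\nabla^2 u$ relative to the splitting $TM=T\Sigma_t\oplus\mathbb{R}\nu$, which (again using $\Delta_g u=0$) gives $\nu(\varphi)=-\varphi H$ and $|\nabla^2u|^2=\varphi^2\big(H^2+|A|^2\big)+2\big|\nabla^{\Sigma_t}\varphi\big|^2$; and (iii) the traced Gauss equation $2K_{\Sigma_t}=R_g-2\operatorname{Ric}(\nu,\nu)+H^2-|A|^2$. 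Eliminating $\operatorname{Ric}(\nu,\nu)$ between these yields
\begin{equation}\label{sternid}
R_g+\frac{|\nabla^2 u|^2}{|\nabla u|^2}=2K_{\Sigma_t}+2\,\frac{\Delta_g|\nabla u|}{|\nabla u|}
\end{equation}
at regular points of $u$.

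From here the strategy is to multiply \eqref{sternid} by $|\nabla u|$ and integrate over large geodesic balls $B_\rho(p)$. The coarea formula turns $\int_{B_\rho}\big(R_g|\nabla u|+|\nabla^2u|^2/|\nabla u|\big)\,dV$ into precisely the bulk quantity appearing in \eqref{t:massest}, and turns $\int_{B_\rho}2K_{\Sigma_t}|\nabla u|\,dV$ into $\int_{\mathbb{R}}\big(\int_{\Sigma_t\cap B_\rho}2K_{\Sigma_t}\,dA\big)\,dt$; the divergence theorem converts $\int_{B_\rho}2\Delta_g|\nabla u|\,dV$ into a flux over the coordinate‑like sphere $S_\rho=\partial B_\rho$. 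One then applies Gauss--Bonnet to each surface‑with‑boundary $\Sigma_t\cap B_\rho$, replacing its total Gauss curvature by an Euler‑characteristic term and the geodesic curvature of $\partial(\Sigma_t\cap B_\rho)$, and inserts the asymptotic expansion $g_{ij}-\delta_{ij}=O(|x|^{-\tau})$ from Definition~\ref{def:AF}. Several of these pieces are individually unbounded as $\rho\to\infty$ when $\tau\in(\tfrac12,1)$ — growing like $\rho^{1-\tau}$ — but their divergent parts cancel, in exactly the manner that renders the ADM integral in Definition~\ref{def:AF} convergent, and the finite remainder of the boundary contributions is controlled in terms of $16\pi\,m(g)$. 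The outcome is an identity of the schematic form: the bulk integral in \eqref{t:massest} equals $16\pi\,m(g)$ minus a correction term $\int_{\mathbb{R}}\mathcal{C}(t)\,dt$, where $\mathcal{C}(t)$ is built, via Gauss--Bonnet, from $\int_{\Sigma_t}K_{\Sigma_t}\,dA$ and the topology of $\Sigma_t$. The theorem then reduces to showing $\int_{\mathbb{R}}\mathcal{C}(t)\,dt\ge 0$, i.e. that the level sets are, in total curvature, no larger than a flat plane.

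This last step is where the topological hypothesis enters. For almost every regular value $t$, since $u$ is asymptotic to $x^1$ and $M$ has a single end, $\Sigma_t$ has exactly one non‑compact component, asymptotic to the Euclidean plane $\{x^1=t\}$; by Gauss--Bonnet its contribution to $\lim_{\rho\to\infty}\int_{\Sigma_t\cap B_\rho}K_{\Sigma_t}\,dA$ is $2\pi(1-2\gamma_0)-2\pi=-4\pi\gamma_0\le 0$, where $\gamma_0$ is its genus and the subtracted $2\pi$ is the limiting total geodesic curvature of the nearly‑round boundary circles. Every other component is a closed orientable surface, and the assumption $H_2(M_l,\mathbb{Z})=0$ rules out spheres among them: a sphere is null‑homologous, hence separates $M$, and since $M$ has a single end it bounds a compact region $W$; then $u|_W$ is harmonic with $u|_{\partial W}\equiv t$, so the maximum principle forces $u\equiv t$ on $W$, and unique continuation then forces $u$ constant on all of $M$, contradicting $u\sim x^1$. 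Hence every closed component has genus $\ge 1$ and contributes $4\pi(1-\gamma_j)\le 0$, so $\int_{\Sigma_t}K_{\Sigma_t}\,dA\le 0$ for a.e. $t$, which gives $\int_{\mathbb{R}}\mathcal{C}(t)\,dt\ge 0$ and hence \eqref{t:massest}.

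The main obstacle is the asymptotic bookkeeping of the previous paragraph: isolating the finite part of the combined boundary terms and bounding it by $16\pi\,m(g)$ requires a careful second‑order expansion of the metric, of the mean curvature of the level sets near infinity, and of the geodesic curvature of their boundary curves on $S_\rho$, and one must see the $O(\rho^{1-\tau})$ contributions cancel exactly. A secondary technical difficulty, also handled in \cite{BKKS}, is that \eqref{sternid} and the subsequent integrations involve the singular factors $|\nabla u|^{-1}$ and $\Delta_g|\nabla u|/|\nabla u|$ on the critical set $\{|\nabla u|=0\}$; justifying the integrated identity requires the fact that critical sets of harmonic functions in dimension three have locally finite one‑dimensional Hausdorff measure, together with a cutoff argument showing their contribution is negligible.
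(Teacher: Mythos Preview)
The paper does not prove this theorem at all: it is quoted as background from \cite{BKKS}, with only the remark that the stated hypotheses (orientability and $H_2(M,\mathbb{Z})=0$) allow the integral to be taken over all of $M$ rather than an exterior region. Your proposal is therefore not competing with anything in the paper; it is an outline of the original \cite{BKKS} argument itself. As such an outline it is essentially correct. In particular, your pointwise identity \eqref{sternid} is exactly right (one can check it by combining $\varphi\Delta\varphi=|\nabla^2u|^2-|\nabla\varphi|^2+\mathrm{Ric}(\nabla u,\nabla u)$ with the decomposition $|\nabla^2u|^2=\varphi^2H^2+\varphi^2|A|^2+2|\nabla^\Sigma\varphi|^2$, $|\nabla\varphi|^2=\varphi^2H^2+|\nabla^\Sigma\varphi|^2$, and the traced Gauss equation), and you have correctly identified the two genuine technical burdens --- the asymptotic cancellation of the $O(\rho^{1-\tau})$ boundary pieces, and the justification of the integrated identity across the critical set of $u$.

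One logical wrinkle in your topological step: you write that $H_2(M,\mathbb{Z})=0$ ``rules out spheres'' because a sphere is null-homologous, hence separating, hence bounding a compact region on which the maximum principle forces $u$ constant. That argument is fine, but it nowhere uses that the surface is a sphere --- under $H_2=0$ \emph{every} closed orientable surface is null-homologous, so your maximum-principle argument in fact shows there are no closed components whatsoever in a regular level set. Your sentence ``Hence every closed component has genus $\ge 1$'' is therefore misleading (there are none), though of course the weaker conclusion still suffices for the Euler-characteristic bound you need. You might also note, for context, that the inequality $\chi(\Sigma_t)\le 1$ is what is really at stake, and only spherical components would violate it; this is why the paper singles out the $H_2$ hypothesis as the device that excludes non-separating spheres in particular.
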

Note that in the statement of this result in \cite{BKKS}, the integral is taken over an ``exterior region." However, it is clear from the proof that the assumptions of orientability and vanishing second homology allow the integral to be taken over all of $M$.

Observe that if the scalar curvature is nonnegative then the mass would exert $L^2$ control on the Hessian of $u$, \emph{if} we had an upper bound on the gradient of $u$, and then once we have $L^2$ smallness of the Hessian of all asymptotically linear harmonic  functions, we can use Cheeger-Colding arguments to show that our space is ``almost'' Euclidean. In the next section we will use uniform asymptotics, the mass inequality, and the Cheng-Yau gradient estimate to prove a uniform gradient bound for $u$.

\section{Estimates for Asymptotically Linear Harmonic Functions}
\label{sec3} \setcounter{equation}{0}
\setcounter{section}{3}

\textbf{Notation:} For $r>0$, let $S_r:=\partial B^{\mathbb{E}}_r(0)$ denote the standard coordinate sphere of radius $r$,
and for any asymptotically flat $(M, g)$ with asymptotic coordinate chart $\Phi$, if $r>1$, define $\mathcal{S}_r:=\Phi^{-1}(S_r)$ and $M_r$ to be the bounded component of $M\setminus \mathcal{S}_r$.  Or in other words, $M_r$ is the complement of $\Phi^{-1}( \mathbb{R}^3\setminus \bar{B}_r^{\mathbb{E}}(0))$ in $M$. For $r>s>1$, we define $\mathcal{A}_{s,r}:= M_r\setminus \overline{M}_s$.

Given $r_1>r_0>1$, for each $i$, we can always choose a harmonic function $u^i$ asymptotic to $x^i$ such that the average of $u^i$ over $\mathcal{A}_{r_0,r_1}$ is zero, and every other possible choice for $u^i$ differs from this one by a constant. Theorem~\ref{t:mass} allows us to control the supremum of these ``normalized'' functions $u^i$ on $M_{r_1}$, solely in terms of the asymptotic flatness parameters.

\begin{proposition}\label{prop:sup}
Let $(M, g)$ be an orientable complete $(b,\tau)$-asymptotically flat manifold with $H_2(M,\mathbb{Z})=0$, $R_{g}\geq 0$, and mass $m(g) \leq\bar{m}$. For sufficiently large $r_0$ (depending on $b$ and $\tau$) and any $r_1>r_0$, there exists
 a constant $C(r_0, r_1, b,\tau,\bar{m})$ such that
\begin{equation}\label{e:supbound}
\sup_{M_{r_1}}|{u}|\leq C(r_0, r_1, b,\tau,\bar{m}),
\end{equation}
where ${u}$ is an asymptotically linear harmonic function (as defined in Section~\ref{sec2}) whose average over $\mathcal{A}_{r_0,r_1}$ is zero.
\end{proposition}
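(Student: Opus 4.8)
The plan is to control $\sup_{M_{r_1}}|u|$ by combining an interior elliptic estimate on the asymptotic end (where the metric is uniformly close to Euclidean) with the $L^2$ Hessian bound coming from the mass inequality, Theorem~\ref{t:mass}. The key point is that the mass inequality gives
\[
\int_M \frac{|\nabla^2 u|^2}{|\nabla u|}\,dV \le 16\pi\, m(g) \le 16\pi\bar m ,
\]
but by itself this does not bound $|\nabla u|$ from below or above. So the first task is to get a one-sided bound on $|\nabla u|$ on the asymptotic region, and the natural place is far out in the coordinate chart.

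First I would work in the asymptotic chart $\Phi$. By the $(b,\tau)$-asymptotic flatness bounds \eqref{asymptoticallf} together with the defining property $u^i - x^i \in C^{2,\alpha}_{1-\tau}(M)$, the harmonic function $u = u^i$ satisfies $u = x^i + O(|x|^{1-\tau})$ and $\nabla u = \partial_i + O(|x|^{-\tau})$ in these coordinates; in particular, choosing a fixed large radius $\rho_0 = \rho_0(b,\tau)$, we have $\tfrac12 \le |\nabla u| \le 2$ on $M \setminus M_{\rho_0}$, with constants depending only on $b,\tau$. On this region the mass inequality therefore upgrades to a genuine $L^2$ Hessian bound: $\int_{M\setminus M_{\rho_0}} |\nabla^2 u|^2\,dV \le C(b,\tau,\bar m)$. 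Combined with the pointwise asymptotics of $u$ itself near the sphere $\mathcal S_R$ for large $R$ (where $u \approx x^i$, hence $|u| \le CR$), this controls both $\|u\|_{L^2(\mathcal A_{\rho_0, R})}$ and $\|\nabla^2 u\|_{L^2}$ on an annulus. One should then use a Poincaré/Sobolev argument on the annular region $\mathcal A_{r_0,r_1}$ — recalling that $u$ has been normalized to have average zero there — to pass from the Hessian $L^2$ bound and the average-zero normalization to an $L^2$, and then (via the mean-value/Moser iteration or the Cheng–Yau estimate together with subharmonicity-type arguments) a pointwise bound on $u$ over $\mathcal A_{r_0,r_1}$, all in terms of $r_0, r_1, b, \tau, \bar m$.

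Next, to reach the full region $M_{r_1}$ — which includes the ``capped off'' inner portion where we have no direct information about the metric except the Ricci lower bound — I would use the harmonicity of $u$ and the Cheng–Yau gradient estimate, Theorem~\ref{t:CY}, together with a maximum-principle bound. Since $\Delta_g u = 0$ on all of $M$, the function $u$ attains its extrema on $M_{r_1}$ over the boundary $\mathcal S_{r_1}$ (or more carefully, one applies the maximum principle on the precompact region $M_{r_1}$ whose topological boundary is $\mathcal S_{r_1}$), so $\sup_{M_{r_1}}|u| \le \sup_{\mathcal S_{r_1}}|u|$. The latter is controlled by the asymptotic estimate on $u$ near $\mathcal S_{r_1}$: since $u - x^i$ lies in $C^{2,\alpha}_{1-\tau}$, we have $|u| \le r_1 + C(b,\tau) r_1^{1-\tau}$ on $\mathcal S_{r_1}$ — but wait, this requires the $C^{2,\alpha}_{1-\tau}$ norm of $u - x^i$ to be controlled uniformly, which is where the mass inequality and the normalization must re-enter, because a priori the weighted norm of $u - x^i$ could be large. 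So the honest route is: the Hessian $L^2$ bound plus the average-zero normalization over $\mathcal A_{r_0,r_1}$ pins down $u$ there (previous paragraph), then standard interior elliptic estimates for the harmonic equation $\Delta_g u = 0$ propagate this to an $L^\infty$ bound on, say, $\mathcal A_{r_0, 2r_1}$, and finally the maximum principle on $M_{r_1}$ (whose boundary lies inside this annulus) gives \eqref{e:supbound}.

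The main obstacle I anticipate is the step of converting the $L^2$ Hessian bound into an $L^\infty$ bound on $u$ over the annulus $\mathcal A_{r_0,r_1}$ while keeping all constants depending only on $r_0, r_1, b, \tau, \bar m$ and not on the (uncontrolled) interior geometry. The subtlety is that a two-derivative $L^2$ bound on a $3$-manifold does not immediately give $C^0$ control of the function without also controlling lower-order norms, and the natural lower-order control — an $L^2$ bound on $u$ itself — has to be extracted from the average-zero normalization via a Poincaré inequality whose constant depends on the geometry of $\mathcal A_{r_0,r_1}$; one must invoke the Ricci lower bound (and Bishop–Gromov, Theorem~\ref{thm:BG}) to keep that Poincaré constant uniform, and one must be careful that $\mathcal A_{r_0, r_1}$ is connected and not too distorted. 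A clean way to organize this is to prove an $L^2$-to-$L^\infty$ estimate for the harmonic function $u$ directly using Moser iteration (valid under a Ricci lower bound with constants from Bishop–Gromov), which needs only an $L^2$ bound on $u$ over a slightly larger region; that $L^2$ bound in turn follows from the average-zero normalization plus a Poincaré inequality plus the Hessian (really gradient, via integration by parts using harmonicity) bound. Handling the interplay of these estimates so that nothing depends on the interior is the crux of the argument.
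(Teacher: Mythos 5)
There is a genuine gap at the very first step. You assert that, because $u - x^i \in C^{2,\alpha}_{1-\tau}(M)$, one has $\nabla u = \partial_i + O(|x|^{-\tau})$ and hence $\tfrac12 \le |\nabla u| \le 2$ on $M\setminus M_{\rho_0}$ for some $\rho_0=\rho_0(b,\tau)$. But membership in $C^{2,\alpha}_{1-\tau}$ only gives the \emph{qualitative} decay rate; the \emph{norm} of $u-x^i$ in that space is not a priori bounded in terms of $b,\tau,\bar m$, so the radius beyond which $|\nabla u - \partial_i|\le \tfrac14$ depends on the particular manifold. Uniform control of this weighted norm is precisely the content of Lemma~\ref{prop:Bartnik}, whose proof \emph{uses} Proposition~\ref{prop:sup}; assuming it here is circular. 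You correctly flag this circularity later for the $C^0$ bound on $u$ near $\mathcal{S}_{r_1}$, but the same objection already defeats the two-sided gradient bound, which is the load-bearing step you use to convert $\int |\nabla^2 u|^2/|\nabla u|\le 16\pi\bar m$ into an honest $L^2$ Hessian bound, and also (implicitly) to get the $L^2$ bound on $\nabla u$ over the annulus that any Poincar\'e argument requires as input.

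The paper's proof is designed exactly to avoid needing any pointwise control of $|\nabla u|$. The mass inequality is first read as $\int_M |\nabla\sqrt{|\nabla u|}|^2\,dV\le 4\pi\bar m$ via the identity $|\nabla\sqrt{|\nabla u|}|^2\le |\nabla^2u|^2/(4|\nabla u|)$. Then the uniform Sobolev inequality on the asymptotically flat end (applied to $\sqrt{|\nabla u|}-1$, which decays and so can be approximated by compactly supported functions) converts this into an $L^6$ bound on $\sqrt{|\nabla u|}-1$, hence an $L^3$ and then $L^2$ bound on $|\nabla u|$ over the annulus $\mathcal{A}_{r_0/2,2r_1}$, with constants depending only on $r_0,r_1,b,\tau,\bar m$. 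From there the route is the one you sketch: a cutoff/integration-by-parts elliptic estimate for $\|\nabla^2u\|_{L^2}$ on the smaller annulus, the Poincar\'e inequality using the zero-average normalization, Sobolev embedding $H^2\subset C^0$, and the maximum principle. One further remark: the Ricci lower bound and Bishop--Gromov, which you invoke to control the Poincar\'e constant, are not needed here (and indeed do not appear in the statement); the annulus lies entirely in the coordinate chart where $g$ is uniformly equivalent to the Euclidean metric, so all Poincar\'e/Sobolev constants are uniform from $(b,\tau)$-asymptotic flatness alone.
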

\begin{proof}
First, we only need to choose $r_0$ large enough (depending on $b$, $\tau$) so that $g$ is uniformly equivalent to the Euclidean metric on $M\setminus M_{r_0}$. Let ${u}$ be an asymptotically linear harmonic function whose average over $\mathcal{A}_{r_0,r_1}$ is zero.
By combining the straightforward estimate
\begin{equation}\label{eq:straight}
\left|\nabla\sqrt{|\nabla {u}|}\right|^2\leq\frac{|\nabla^2 {u}|^2}{4|\nabla {u}|},
\end{equation}
with the mass inequality (Theorem~\ref{t:mass}) and nonnegative scalar curvature, one finds
\begin{equation}\label{uyp0}
\int_{M} \left|\nabla\sqrt{|\nabla {u}|}\right|^2 dV \leq 4\pi m(g) \le 4\pi \bar{m}.
\end{equation}
Note that the hypotheses of orientability and vanishing second homology are needed to invoke Theorem~\ref{t:mass}.

We would like to use this to find an $L^2$ bound for $\nabla {u}$ on an annulus.  We start by using a uniform Sobolev inequality for asymptotically flat ends. Specifically, \cite[Lemma 3.1]{SchoenYau} applied to the function $\sqrt{|\nabla {u}|}-1$ gives us
\begin{equation}\label{eq:afpoin}
\int_{M \setminus M_{r_0/2}}\left(\sqrt{|\nabla {u}|}-1\right)^6 dV\leq C_1(r_0,b,\tau)\left(\int_{M \setminus M_{r_0/2}} \left|\nabla(\sqrt{|\nabla {u}|}-1)\right|^2dV\right)^3.
\end{equation}
Technically, \cite[Lemma 3.1]{SchoenYau} is only stated for compactly support functions.\footnote{Lemma 3.1 of~\cite{SchoenYau} also assumes a stronger definition of asymptotic flatness, but it still holds with our definition.}
However, $\sqrt{|\nabla {u}|}-1=O_1(|x|^{-\tau})$ lies in $L^6(M\setminus M_{r_0/2})$ and its gradient lies in $L^2(M\setminus M_{r_0/2})$. Thus, $\sqrt{|\nabla {u}|}-1$ can be suitably approximated by functions with compact support to obtain the desired estimate.

Applying the easy estimate $(a+b)^6\le 2^6 (a^6 + b^6)$ with $a= \sqrt{|\nabla {u}|}-1$ and $b=1$, this leads to a $L^3$ bound for $\nabla {u}$ on an annulus,
 \begin{equation}\label{-ojh}
\int_{\mathcal{A}_{r_0/2,2r_1}}|\nabla {u}|^3\,dV \leq 64\left( \int_{\mathcal{A}_{r_0/2, 2r_1}}\left(\sqrt{|\nabla {u}|}-1\right)^6 dV +
 \int_{\mathcal{A}_{r_0/2, 2r_1}} dV\right).
\end{equation}
Combining \eqref{uyp0},  \eqref{eq:afpoin}, \eqref{-ojh}, and the H\"{o}lder inequality then implies that
\begin{equation}\label{adfew}
\int_{\mathcal{A}_{r_0/2, 2r_1}}|\nabla {u}|^2 \,dV \leq C_2(r_0, r_1, b,\tau,\bar{m}).
\end{equation}
Since $u$ is a $g$-harmonic function and we have uniform $C^2$ control on $g$ over $\mathcal{A}_{r_0/2, 2r_1}$, we have the estimate
\begin{equation}\label{estHess}
\int_{\mathcal{A}_{r_0, r_1}}|\nabla^2 {u}|^2  \,dV \le  C_3(r_0, r_1, b, \tau) \int_{\mathcal{A}_{r_0/2, 2r_1}}|\nabla {u}|^2 \,dV.
\end{equation}
This can be proved in a standard way using a cutoff function equal to $1$ on $\mathcal{A}_{r_0, r_1}$ and supported in $\mathcal{A}_{r_0/2, 2r_1}$, and then integrating by parts. The point is that we can obtain an estimate with $|\nabla u|^2$ on the right instead of $u^2$ (as in the usual elliptic estimate) because $\Delta_g$ has no zeroth order term.

Next we  use our assumption that  $u$ has zero average over $\mathcal{A}_{r_0,r_1}$ to invoke the Poincar\'{e} inequality~\cite{GT},
\begin{equation}\label{Poincare}
\int_{\mathcal{A}_{r_0, r_1}}| {u}|^2 \le C_4(r_0, r_1, b, \tau)  \int_{\mathcal{A}_{r_0, r_1}}|\nabla{u}|^2.
\end{equation}
Putting together~\eqref{adfew}, \eqref{estHess}, and \eqref{Poincare}, we see that the Sobolev $H^2(\mathcal{A}_{r_0, r_1})$-norm of $u$ is bounded in terms of $r_0$, $r_1$, $b$, $\tau$, and $\bar{m}$. In three dimensions, $C^0\subset H^2$, so the corresponding Sobolev inequality on $\mathcal{A}_{r_0, r_1}$, whose constant depends only on $r_0$, $r_1$, $b$, and $\tau$, gives us a bound on $ \sup_{\mathcal{A}_{r_0, r_1}} |u|$, which then gives us the desired  bound on $\sup_{M_{r_1}} |u|$ by the maximum principle.
\end{proof}

The semi-global supremum control for the asymptotically linear harmonic functions may be parlayed into a global gradient bound, and with the aid of the mass inequality this further yields a global $L^2$ Hessian estimate. As an initial step towards the gradient bound, we show that the gradients of $u^1, u^2, u^3$  are uniformly well-approximated by the coordinate directions as we approach infinity.

\begin{lemma}\label{prop:Bartnik}
Let $(M, g)$ be an orientable complete $(b,\tau)$-asymptotically flat manifold with $H_2(M,\mathbb{Z})=0$, $R_{g}\geq 0$, and mass $m(g) \leq\bar{m}$.
For sufficiently large $r_0$ (depending on $b$ and $\tau$), there exists a constant $C(r_0, b,\tau,\bar{m})$ such that
\begin{equation}\label{567ty}
|\nabla u^i -\partial_{x^i}|(x)\leq C(r_0, b,\tau,\bar{m}) |x|^{-\tau}\text{ on } M\setminus M_{r_0},
\end{equation}
for $i=1,2,3$, where $u^i$ is a $g$-harmonic function on $M$ asymptotic to the coordinate $x^i$.
\end{lemma}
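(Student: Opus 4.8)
The plan is to derive the estimate from the weighted elliptic theory used to produce $u^i$ in the first place, now made quantitative using Proposition~\ref{prop:sup}. Recall that $u^i - x^i = v^i \in C^{2,\alpha}_{1-\tau}(M)$ solves $\Delta_g v^i = -\Delta_g x^i$, and that $\Delta_g x^i \in C^{0,\alpha}_{-1-\tau}(M)$ with $\|\Delta_g x^i\|_{C^{0,\alpha}_{-1-\tau}} \leq C(b,\tau)$ by asymptotic flatness. The isomorphism $\Delta_g : C^{2,\alpha}_{1-\tau}(M)/\{\text{const}\} \to C^{0,\alpha}_{-1-\tau}(M)$ from \cite[Theorem A.40]{Lee} then gives a bound $\|v^i - \bar{v}^i\|_{C^{2,\alpha}_{1-\tau}} \leq C(b,\tau)$ for the average-normalized representative, but the constant from that abstract isomorphism is not obviously uniform over the class of metrics we consider. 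So first I would record that for $r_0$ large (depending on $b,\tau$), the weighted Schauder/a-priori estimate on the asymptotically flat end takes the form $\|v^i\|_{C^{2,\alpha}_{1-\tau}(M\setminus M_{r_0})} \leq C(r_0,b,\tau)\big(\|\Delta_g x^i\|_{C^{0,\alpha}_{-1-\tau}(M\setminus M_{r_0/2})} + \sup_{\mathcal{A}_{r_0/2, r_0}}|v^i|\big)$ — this is the interior-plus-boundary weighted Schauder estimate, and its constant depends only on the $C^2$-closeness of $g$ to $\delta$ on the end, hence only on $b, \tau$, and $r_0$.

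Next I would control the trailing term $\sup_{\mathcal{A}_{r_0/2, r_0}}|v^i|$. Fix the normalization of $u^i$ so that its average over $\mathcal{A}_{r_0,r_1}$ vanishes (with $r_1$ a fixed multiple of $r_0$, say $r_1 = 2r_0$); then $v^i = u^i - x^i$, and on the compact annulus $\mathcal{A}_{r_0/2, r_0}$ we have $|v^i| \leq |u^i| + |x^i| \leq \sup_{M_{r_1}}|u^i| + C(r_0)$, which is bounded by $C(r_0, b, \tau, \bar{m})$ courtesy of Proposition~\ref{prop:sup}. Feeding this back into the weighted Schauder estimate yields $\|v^i\|_{C^{2,\alpha}_{1-\tau}(M\setminus M_{r_0})} \leq C(r_0, b, \tau, \bar{m})$; in particular $|\nabla v^i|(x) = |\nabla u^i - \nabla x^i|(x) \leq C|x|^{-\tau}$ on $M\setminus M_{r_0}$, where $\nabla$ is the $g$-gradient. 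Finally I would convert $\nabla x^i$ (the $g$-gradient of the coordinate function) into the Euclidean coordinate vector field $\partial_{x^i}$: the difference $\nabla x^i - \partial_{x^i}$ has components $(g^{ij} - \delta^{ij})$, which is $O(b|x|^{-\tau})$ by \eqref{asymptoticallf}, so this contributes only another $C(b,\tau)|x|^{-\tau}$ term. Combining the two bounds gives \eqref{567ty}.

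The main obstacle, and the step requiring the most care, is making the weighted elliptic estimate genuinely uniform over the sequence of metrics — i.e. verifying that the Schauder constant and the surjectivity/kernel structure of $\Delta_g$ on weighted spaces depend only on $(b,\tau,r_0)$ and not on finer features of $g_l$. This is essentially standard once one localizes to the end (where all $g_l$ are uniformly $C^2$-close to $\delta$) and uses a scaling/dyadic-annulus argument to promote the fixed-scale Euclidean Schauder estimate to the weighted estimate, but it is the place where one must be slightly careful rather than cite a black box. Everything else is bookkeeping: assembling Proposition~\ref{prop:sup}, the weighted estimate, and the asymptotic expansion of the metric.
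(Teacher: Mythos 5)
Your proposal is correct and follows essentially the same route as the paper: reduce to bounding $v^i=u^i-x^i$, apply the uniform weighted Schauder estimate on the end with the low-order term controlled by the supremum bound of Proposition~\ref{prop:sup}, and absorb the $O(|x|^{-\tau})$ discrepancy between $\nabla x^i$ and $\partial_{x^i}$ coming from \eqref{asymptoticallf}. The only cosmetic difference is that the paper's version of the weighted estimate (taken from the proof of Lemma A.41 in \cite{Lee}) carries an $L^1$ norm of $v^i$ on the transition annulus where you use a supremum, which is immaterial since you bound the supremum anyway.
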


\begin{proof}
Fix an $i$, let $r_0>1$ be large enough so that Proposition~\ref{prop:sup} holds, and choose $r_1=2r_0$. Later we will see how much larger $r_0$ needs to be. Since we are proving an approximation for the gradient of $u^i$, we may assume without loss of generality that the average of $u^i$ over $\mathcal{A}_{r_0,r_1}$ is zero, since we can subtract a constant as needed.

Observe that asymptotic flatness implies that
\begin{equation}
 |\nabla x^i -\partial_{x^i}|(x)\leq C_1(r_0, b, \tau)  |x|^{-\tau}\text{ on }M\setminus M_{r_0}.
\end{equation}
So it suffices to show that we can appropriately bound the gradient of $v:=u^i-x^i$ on  $M\setminus M_{r_0}$. As described in~\eqref{hasym}, we know that $v\in C^{2,\alpha}_{1-\tau}(M\setminus M_{r_0/2})$ for any $\alpha\in(0,1)$, and thus it satisfies the following estimate, which can be thought of as an improved version of a weighted elliptic Schauder estimate on an exterior coordinate chart:
\begin{equation}\label{weighted_estimate}
\| v\| _{C^{2,\alpha}_{1-\tau}(M\setminus M_{r_0})}  \le C_2(r_0, b, \tau) \left(\| \Delta_g v\|_{C^{0,\alpha}_{-1-\tau}(M\setminus M_{r_0/2})} + \| v\|_{L^1(\mathcal{A}_{r_0/2, r_0})}\right),
\end{equation}
for sufficiently large $r_0$.
This estimate can be seen from the proof of \cite[Lemma A.41]{Lee}, for example.
Since $\Delta_g v = \Delta_g (u^i-x^i) = -\Delta_g x^i$, we see that the first term on the right side of~\eqref{weighted_estimate} is bounded by some constant $C_3(r_0, b, \tau)$. Finally, the second term on the right side of~\eqref{weighted_estimate} is bounded by some constant $C_4(r_0, b, \tau, \bar{m})$ thanks to the supremum bound from Proposition~\ref{prop:sup} and the maximum principle. The result then follows from the definition of the weighted H\"{o}lder norm.
\end{proof}

The following Hessian estimate is the technical core of our arguments and will be used frequently. It arises from the mass inequality \eqref{t:massest}, and a global gradient bound that follows from the previous proposition and the Cheng-Yau gradient estimate.

\begin{proposition}[Global gradient bound and $L^2$ Hessian bound]
\label{lem:L2est}
Let $(M, g)$ be an orientable complete $(b,\tau)$-asymptotically flat manifold with $H_2(M,\mathbb{Z})=0$, $R_{g}\geq 0$, and mass $m(g) \leq\bar{m}$.
 Moreover, assume that $\mathrm{Ric}(g)\geq -2\kappa g$, where $\kappa>0$.
There exists a constant
  $C_{\mathrm{g}}(b,\tau,\bar{m},\kappa)$ such that
\begin{equation}
\label{globalgradient}
\sup_{M}|\nabla u|\leq C_{\mathrm{g}}(b,\tau,\bar{m},\kappa),
\end{equation}
and
\begin{equation}
\label{eq:L2est}
\int_M|\nabla ^2 u|^2\,dV \leq 16\pi C_{\mathrm{g}}(b,\tau,\bar{m},\kappa)\cdot m(g),
\end{equation}
where $u$ is an asymptotically linear harmonic function on $(M, g)$.
\end{proposition}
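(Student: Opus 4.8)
The plan is to get the global gradient bound first, and then feed it into the mass inequality \eqref{t:massest} to extract the $L^2$ Hessian estimate. The gradient bound will be obtained by splitting $M$ into the asymptotic region $M\setminus M_{r_0}$ and the compact region $M_{r_0}$. On the asymptotic end, Lemma~\ref{prop:Bartnik} already gives $|\nabla u^i - \partial_{x^i}|(x)\leq C(r_0,b,\tau,\bar m)|x|^{-\tau}$ on $M\setminus M_{r_0}$, and since $|\partial_{x^i}|_g$ is uniformly bounded there (by uniform equivalence of $g$ and the Euclidean metric for $r_0$ large), we conclude $|\nabla u^i|\leq C(r_0,b,\tau,\bar m)$ on $M\setminus M_{r_0}$ — in particular on the coordinate sphere data we need. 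Here one should first fix $r_0$ large enough (depending only on $b,\tau$) that Proposition~\ref{prop:sup} and Lemma~\ref{prop:Bartnik} apply and that $g$ is uniformly Euclidean-equivalent on the end; this makes $r_0$ an allowed parameter.

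For the interior, the idea is to control $\sup_{M_{r_0}}|\nabla u|$ via the Cheng-Yau gradient estimate (Theorem~\ref{t:CY}), which requires a $C^0$ bound on $u$ on a slightly larger region. Proposition~\ref{prop:sup} (with $r_1$ chosen appropriately, say $r_1 = 2r_0$ or larger, after first normalizing $u$ to have zero average over $\mathcal A_{r_0,r_1}$ — which changes $u$ only by a constant and hence does not affect $\nabla u$) gives $\sup_{M_{r_1}}|u|\leq C(r_0,r_1,b,\tau,\bar m)$. Now cover $M_{r_0}$ by finitely many geodesic balls $B_{r}(q)$ with $B_{s}(q)\subset M_{r_1}$ for fixed radii $r<s$ depending only on $r_0,r_1$; Theorem~\ref{t:CY} then bounds $\sup_{B_r(q)}|\nabla u|$ by $C(r,s,\kappa)\sup_{B_s(q)}|u|\leq C(r,s,\kappa)\sup_{M_{r_1}}|u|$, and patching these together gives $\sup_{M_{r_0}}|\nabla u|\leq C_{\mathrm g}(b,\tau,\bar m,\kappa)$. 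Combined with the asymptotic bound this yields \eqref{globalgradient}. (One subtlety: the number of balls needed to cover $M_{r_0}$ must itself be controlled independently of $l$; this follows from Bishop-Gromov volume comparison, since the volume of $M_{r_0}$ is bounded in terms of the distance from the end data, the Ricci lower bound, and $r_0$ — this is where the Ricci hypothesis enters the gradient bound beyond its role in Cheng-Yau.)

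With the pointwise bound $|\nabla u|\leq C_{\mathrm g}$ in hand, the Hessian estimate is immediate from the mass inequality. Since $R_g\geq 0$, Theorem~\ref{t:mass} gives
\begin{equation*}
m(g) \geq \frac{1}{16\pi}\int_M \frac{|\nabla^2 u|^2}{|\nabla u|}\,dV \geq \frac{1}{16\pi\, C_{\mathrm g}(b,\tau,\bar m,\kappa)}\int_M |\nabla^2 u|^2\,dV,
\end{equation*}
which rearranges to \eqref{eq:L2est}. (At points where $\nabla u = 0$ the integrand $|\nabla^2 u|^2/|\nabla u|$ is interpreted as in~\cite{BKKS}, and the pointwise inequality $|\nabla^2 u|^2 \leq C_{\mathrm g}\,|\nabla^2 u|^2/|\nabla u|$ holds wherever $|\nabla u|\leq C_{\mathrm g}$, i.e.\ everywhere; the set $\{\nabla u = 0\}$ contributes nothing.)

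I expect the main obstacle to be the interior gradient bound: specifically, making sure all the geometric data needed to apply Cheng-Yau on the compact part — the covering of $M_{r_0}$, the radii $r<s$, and the number of balls — are controlled purely in terms of $b,\tau,\bar m,\kappa$ and not on the individual manifold. This is exactly the point where the Ricci lower bound and Bishop-Gromov are essential, and it is why the constant $C_{\mathrm g}$ in \eqref{globalgradient} and \eqref{eq:L2est} depends on $\kappa$ whereas the constants in Proposition~\ref{prop:sup} and Lemma~\ref{prop:Bartnik} did not. Everything else is bookkeeping with the already-established propositions and the mass inequality.
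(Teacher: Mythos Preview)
Your proof is correct and follows the paper's approach: bound $|\nabla u|$ on the exterior via Lemma~\ref{prop:Bartnik}, on the interior via Cheng--Yau together with the $C^0$ bound from Proposition~\ref{prop:sup}, and then feed the global gradient bound into the mass inequality~\eqref{t:massest} to extract~\eqref{eq:L2est}.

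One remark: your parenthetical worry about controlling the \emph{number} of balls in a cover of $M_{r_0}$ is both unnecessary and not obviously justified. It is unnecessary because the Cheng--Yau estimate gives the \emph{same} bound $C(r,s,\kappa)\sup_{M_{r_1}}|u|$ on every ball, so taking the supremum over all $q\in M_{r_0}$ yields that bound regardless of how many balls are involved --- no finite cover with controlled cardinality is needed. (This is exactly what the paper does: apply Theorem~\ref{t:CY} at each $q\in M_{r_0}$ with a single $s=s(b,\tau)$ small enough that $B_{2s}(q)\subset M_{r_1}$.) It is also not obviously justified, since neither the diameter nor the volume of $M_{r_0}$ is a priori controlled purely in terms of $b,\tau,\bar m,\kappa$. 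So the Ricci lower bound enters \emph{only} through the Cheng--Yau constant, not through any Bishop--Gromov covering argument.
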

\begin{proof}
Select a particular $r_0$, depending on $(b,\tau)$, large enough so that Lemma~\ref{prop:Bartnik} holds. As in the proof of Lemma~\ref{prop:Bartnik}, we then choose $r_1=2r_0$ and assume without loss of generality that the average of $u^i$ over $\mathcal{A}_{r_0,r_1}$ is zero.  By Lemma \ref{prop:Bartnik}, it follows that
\begin{equation}\label{1382hgn}
\sup_{M\setminus M_{r_0}}|\nabla u|\leq C_1(b, \tau,\bar{m}).
\end{equation}
In order to obtain the gradient bound on $M_{r_0}$, we may apply the Cheng-Yau gradient estimate (Theorem~\ref{t:CY}) to see that for any $q\in M$ and radius $s>0$, we have
\begin{equation}\label{///}
\sup_{B_{s}(q)}|\nabla u|\leq C_2(s,\kappa) \sup_{B_{2s}(q)}|u|.
\end{equation}
Choose $s=s(b, \tau)$ small enough so that $B_{2s}(q)\subset M_{r_1}$ for any $q\in M_{r_0}$. Then since $u$ has average zero over $\mathcal{A}_{r_0,r_1}$, Proposition \ref{prop:sup} provides a uniform bound on $\sup_{B_{2s}(q)}|u|$, yielding
\begin{equation}
\sup_{M_{r_0}}|\nabla u|\leq C_3(b,\tau,\bar{m},\kappa).
\end{equation}
Together with \eqref{1382hgn}, we achieve the global gradient bound~\eqref{globalgradient}.

To prove~\eqref{eq:L2est}, the topological condition allows us to apply the mass inequality (Theorem~\ref{t:mass}), which combines with the assumption of nonnegative scalar curvature to produce
\begin{equation}\label{jjjjjj1}
\int_M\frac{|\nabla^2 u|^2}{|\nabla u|}\,dV\leq 16\pi m(g).
\end{equation}
Putting this together with~\eqref{globalgradient}, the result~\eqref{eq:L2est} follows.

\end{proof}

\section{Estimates Along Geodesics}
\label{sec4} \setcounter{equation}{0}
\setcounter{section}{4}

The goal of the present section is to transform the $L^2$ Hessian bound~\eqref{eq:L2est} into more usable estimates along certain minimizing geodesics. By taking advantage of geodesics connecting points within the inner region of $M$ to points lying in the asymptotic end, where uniform estimates hold, we can integrate along those geodesics to obtain useful bounds for the asymptotically linear harmonic functions in the inner region of $M$.
These bounds lead to the quantitative almost Pythagorean theorem in the next section, which serves as the basis for Gromov-Hausdorff convergence to Euclidean space.


We make the following convenient definition describing the family containing the sequence of objects described in the statement of our main theorem, Theorem~\ref{t:main}.

\begin{definition}\label{def:class}
Fix parameters $b,\bar{m},\kappa>0$, $\tau\in(\tfrac{1}{2},1)$, and a point $\mathbf{p}\in\mathbb{R}^3\setminus \bar{B}_1^{\mathbb{E}}(0)$. Let $\mathcal{M}(b,\tau,\bar{m},\kappa,\mathbf{p})$ denote the
 set of triples of pointed orientable complete $(b,\tau)$-asymptotically flat $3$-manifolds $(M,g,p)$ such that
$R_g \geq 0$, $\mathrm{Ric}(g)\geq -2\kappa g$, $H_2(M,\mathbb{Z})=0$, $m(g)\leq\bar{m}$, and $\mathbf{p} = \Phi(p)$, where  $\Phi$ is the asymptotic coordinate chart of $(M,g)$ as in Definition~\ref{def:AF}. We will often use $\mathcal{M}$ as an abbreviation for $\mathcal{M}(b,\tau,\bar{m},\kappa,\mathbf{p})$  when the meaning is clear from context.

For each $(M, g, p)\in\mathcal{M}$, we define $u^1, u^2, u^3$ to be the unique $g$-harmonic functions asymptotic to the coordinates $x^1, x^2, x^3$ defined by $\Phi$, normalized so that $(u^1(p), u^2(p), u^3(p))= (0,0,0)$.
\end{definition}

\textbf{Convention:} since we will always be working in a fixed $\mathcal{M}=\mathcal{M}(b,\tau,\bar{m},\kappa,\mathbf{p})$ from now on, we suppress any dependence on the quantities $b$, $\tau$, $\bar{m}$, and $\kappa$ when we talk about what quantities a constant $C$ must depend on, as well as whenever we say that something is ``sufficiently large'' or ``sufficiently small".

With this convention in place, Theorem~\ref{t:main} can be rephrased as follows. Given $\mathcal{M}$ and any $r, \varepsilon>0$, there exists $\delta>0$ such that for all $(M, g, p)\in\mathcal{M}$, if $m(g)<\delta$, then the Gromov-Hausdorff distance between the geodesic ball $B_r(p)$ and the  Euclidean ball $B^{\mathbb{E}}_r(0)$ is less than $\varepsilon$.

For our proof, we will need to use points $q_{\pm i}$ that are far from $B_r(p)$ in the three coordinate directions, where we have strong estimates from asymptotic flatness, and then we will integrate inwards to obtain estimates in $B_r(p)$.

\textbf{Notation:} for $L>1$ and for $i=1,2,3$, we define points $q_{+i}:=\Phi^{-1} (Le_i)$ and  $q_{-i}:=\Phi^{-1}(-Le_i)$, where $e_1=(1,0,0)$,  $e_2=(0,1,0)$, and $e_3=(0,0,1)$. Whenever we introduce the parameter $L$, we are also introducing the points~$q_{\pm i}$.





The next lemma, which may be thought of as a refinement of Lemma~\ref{prop:Bartnik}, tells us that at any point $q_i^*$ near the far away point $q_{\pm i}$, $\nabla u^i$ can be approximated by the tangent vector of the minimizing geodesic connecting $q_i^*$ to a point in $B_r(p)$, while the other $\nabla u^j$'s are nearly orthogonal to it.

\begin{lemma}\label{hhhh}
Let $(M,g,p)\in \mathcal{M}$ and let $r, \varepsilon>0$. For sufficiently large $L$, depending on both $r$ and $\varepsilon$, the following holds. Fix $i=1,2,3$.
For any minimizing geodesic $\sigma_i$ from any $x^*\in B_{r}(p)$ to any $q_i^*\in B_{1}(q_{\pm i})$, we have
 \begin{equation}\label{eq:tangentcloseness}
|\pm \nabla u^i-\sigma_{i}'|(q_i^*)+\sum_{j\ne i}|\langle \nabla u^j,\sigma_{i}'\rangle| (q_{i}^*)<\varepsilon.
\end{equation}
\end{lemma}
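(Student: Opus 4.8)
The plan is to control the two terms in \eqref{eq:tangentcloseness} separately, using the asymptotic-flatness regime near $q_{\pm i}$ together with the global gradient bound from Proposition~\ref{lem:L2est}. First I would observe that for $L$ large the ball $B_1(q_{\pm i})$ lies deep in the asymptotic end, say in $M\setminus M_{L/2}$, where Lemma~\ref{prop:Bartnik} gives $|\nabla u^j-\partial_{x^j}|\le C|x|^{-\tau}\le C (L/2)^{-\tau}$ for all $j$. So at the point $q_i^*$ the vectors $\nabla u^1,\nabla u^2,\nabla u^3$ are within $C L^{-\tau}$ of the Euclidean coordinate frame $\partial_{x^1},\partial_{x^2},\partial_{x^3}$, which (again by \eqref{asymptoticallf}) is within $CL^{-\tau}$ of a $g$-orthonormal frame. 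Hence $|\nabla u^j|=1+O(L^{-\tau})$ and $|\langle\nabla u^j,\nabla u^k\rangle|=O(L^{-\tau})$ for $j\ne k$. Thus it suffices to show that the unit tangent $\sigma_i'(q_i^*)$ of the minimizing geodesic from $x^*\in B_r(p)$ to $q_i^*$ is itself within $\varepsilon/2$ (in the relevant sense) of $\pm\partial_{x^i}$, and then combine.

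The key step is therefore an estimate on the direction of the minimizing geodesic $\sigma_i$ at its far endpoint. The idea is the standard triangle-comparison/excess heuristic: the point $x^*$ is confined to $B_r(p)$, a bounded region, while $q_i^*\in B_1(q_{\pm i})$ with $q_{\pm i}=\Phi^{-1}(\pm L e_i)$ going to Euclidean infinity along the $i$-th axis. Since $g$ is $C^0$-close to Euclidean on $M\setminus M_{r_0}$, the $g$-distance from $q_i^*$ to $x^*$ is $L(1+o(1))$ and the geodesic must exit every large coordinate sphere $S_\rho$ for $\rho$ between (say) $2r$ and $L/2$; comparing with Euclidean geometry, a nearly radial ray from a point of Euclidean norm $\approx L$ to a point of Euclidean norm $\le r$ has tangent direction at the far end within $O(r/L)$ of $\mp e_i$. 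Making this rigorous: parametrize $\sigma_i$ from $q_i^*$, note $|x^*|_{\mathbb E}\le r+Cb$ is bounded while $|\sigma_i(0)|_{\mathbb E}\approx L$, and use that along $\sigma_i$ inside $M\setminus M_{r_0}$ the metric is $(1+CL^{-\tau})$-bi-Lipschitz to Euclidean, so the Euclidean length of $\sigma_i$ restricted to $M\setminus M_{2r}$ is $\le (1+CL^{-\tau})\,d_g(q_i^*,x^*)\le L(1+CL^{-\tau})+Cr$; since this only barely exceeds $|\,|q_i^*|_{\mathbb E}-|x^*|_{\mathbb E}\,|$, the curve is nearly a Euclidean radial segment, forcing $\sigma_i'(q_i^*)$ to be within $\eta(r,L)$ of $\mp\partial_{x^i}$ with $\eta\to 0$ as $L\to\infty$ for fixed $r$. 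Then $\langle\nabla u^i,\sigma_i'\rangle(q_i^*)\approx\langle\partial_{x^i},\mp\partial_{x^i}\rangle=\mp 1$, giving $|\pm\nabla u^i-\sigma_i'|^2=|\nabla u^i|^2+1\mp 2\langle\nabla u^i,\sigma_i'\rangle\le 2+2\eta+CL^{-\tau}-2+ \cdots = O(\eta+L^{-\tau})$, and for $j\ne i$, $|\langle\nabla u^j,\sigma_i'\rangle|\le|\langle\partial_{x^j},\mp\partial_{x^i}\rangle|+O(\eta+L^{-\tau})=O(\eta+L^{-\tau})$; choosing $L$ large makes the whole left side of \eqref{eq:tangentcloseness} less than $\varepsilon$.

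One subtlety worth flagging: a priori one must rule out the minimizing geodesic $\sigma_i$ taking a long excursion into $M_{r_0}$ before heading out — but since $x^*\in B_r(p)$ is itself within bounded $g$-distance of $\mathcal S_{r_0}$ and $d_g(q_i^*,x^*)\le L(1+CL^{-\tau})+C$, the portion of $\sigma_i$ inside $M_{2r}$ has length $\le C(r)$, which is negligible compared to $L$, so the "nearly radial in the Euclidean chart" conclusion survives. The main obstacle is thus the quantitative near-radiality of the minimizing geodesic at its far endpoint; everything else — the $\nabla u^j$ estimates near $q_{\pm i}$, the orthonormality corrections, and the final $\varepsilon/3$ bookkeeping — is a routine consequence of Lemma~\ref{prop:Bartnik}, \eqref{asymptoticallf}, and the triangle inequality. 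I expect no use of the Hessian estimate \eqref{eq:L2est} here; that enters only in the subsequent sections when these far-endpoint facts are propagated inward along $\sigma_i$.
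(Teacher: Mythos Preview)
Your overall strategy matches the paper's: reduce both terms of \eqref{eq:tangentcloseness} to controlling $|\partial_{x^i}-\sigma_i'|(q_i^*)$ via Lemma~\ref{prop:Bartnik} and the gradient bound~\eqref{globalgradient}, and then argue that the minimizing geodesic is nearly radial at its far endpoint. The paper carries out exactly this reduction.

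The gap is in your justification of the near-radiality. You argue that the Euclidean length of $\sigma_i$ in the chart barely exceeds $\big|\,|q_i^*|_{\mathbb E}-|x^*|_{\mathbb E}\,\big|$, and conclude that this ``forces $\sigma_i'(q_i^*)$ to be within $\eta(r,L)$ of $\mp\partial_{x^i}$.'' But a small length excess does \emph{not} control the tangent direction of a curve at an endpoint: a curve can start in an arbitrary direction, make an arbitrarily short detour, and then proceed radially, with negligible excess. (There is also a quantitative problem: the bi-Lipschitz constant on all of $M\setminus M_{r_0}$ is $1+Cr_0^{-\tau}$, not $1+CL^{-\tau}$, since $\sigma_i$ passes through every radius between $\sim r$ and $\sim L$; the excess you actually get from $C^0$ comparison is not small.) What is missing is an argument that uses the fact that $\sigma_i$ is a \emph{geodesic} for a metric $C^2$-close to Euclidean, not merely a curve of near-minimal length.

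The paper handles this by a blow-down contradiction argument. Assuming the conclusion fails along a sequence with $L_k\to\infty$, one rescales the chart by $L_k^{-1}$; the pulled-back metrics converge in $C^2_{\mathrm{loc}}$ to the Euclidean metric on $\mathbb R^3\setminus\{0\}$, and the rescaled geodesics subconverge (as solutions of an ODE) to a Euclidean line segment from $e_i$ to a point on a small sphere $S_\delta$. For small $\delta$ that segment has tangent at $e_i$ as close to $e_i$ as desired, contradicting the assumed failure. The essential point is that $C^2$ convergence of metrics yields $C^1$ convergence of geodesics, which is exactly what is needed to pass from positional to tangential information. A direct version of your approach is possible, but it would require the geodesic equation together with the Christoffel decay $|\Gamma^k_{ij}|\le C|x|^{-1-\tau}$ to show that the tangent is nearly constant on the portion of $\sigma_i$ in $\{|x|\ge L/4\}$, and then to locate that constant from the displacement toward the bounded region; length comparison alone does not suffice.
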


\begin{proof}
Let $(M, g, p)\in \mathcal{M}$, fix $r>0$, and consider the case when $q_i^*\in B_{1}(q_{+i})$.  The case involving $q_{-i}$ is similar. Thanks to Lemma~\ref{prop:Bartnik} and the gradient bound~\eqref{globalgradient}, the current lemma really just depends on the relationship between the coordinate directions and the geodesics. To be precise,
\begin{equation}
|\nabla u^i -\sigma_{i}'|(q_i^*)\leq|\nabla u^i -\partial_{x^i}|(q_i^*)
+|\partial_{x^i}-\sigma_{i}'|(q_i^*),
\end{equation}
and then Lemma~\ref{prop:Bartnik}, together with uniform asymptotics, implies that $|\nabla u^i -\partial_{x^i}|(q_i^*)$ becomes small as $L$ becomes large. So it suffices to control $|\partial_{x^i}-\sigma_{i}'|(q_i^*)$ if we want to control the left-hand side of the inequality. Meanwhile,
\begin{align}
\begin{split}
|\langle \nabla u^j,\sigma_{i}'\rangle|(q_{i}^*)\leq& |\langle\nabla u^j,\partial_{x^i}\rangle|(q_i^*)+|\nabla u^j||\partial_{x^i}-\sigma_{i}'|(q_i^*)\\
\leq &|\langle \partial_{x^j},\partial_{x^i}\rangle|(q_i^*)
+|\partial_{x^i}||\nabla u^j -\partial_{x^j}|(q_i^*)+|\nabla u^j||\partial_{x^i}-\sigma_{i}'|(q_i^*).
\end{split}
\end{align}
As $L$ becomes large, the first term of the right-hand side becomes small if $j\ne i$, and the second term becomes small because of  Lemma~\ref{prop:Bartnik} again. Finally, the gradient bound~\eqref{globalgradient} tells us that it suffices to control $|\partial_{x^i}-\sigma_{i}'|(q_i^*)$ if we want to control the third term.

To summarize, we need only prove that $|\partial_{x^i}-\sigma_{i}'|(q_i^*)$ can be made arbitrarily small by choosing $L$ sufficiently large (depending also on $r$). This is just a property of uniform asymptotic flatness. Suppose that the desired statement is false. Then there exists an $\varepsilon_0 >0$,  a sequence of $(M_k,g_k, p_k)\in\mathcal{M}$ and $L_{k}\rightarrow\infty$, and points $q_{i,k}^*\in B_1(q_{+ i,k})$, $x_{k}^* \in B_{r}(p_k)$  and minimizing geodesics $\sigma_{i,k}$ from $x_k^*$ to $q_{i,k}^*$  such that
\begin{equation}
|\partial_{x^i}-\sigma_{i,k}'|_{g_k}(q_{i,k}^*)\geq \varepsilon_0.
\end{equation}
We will obtain a contradiction from a blow-down argument. There exists a radius $\bar{r}$ independent of $k$ that is large enough so that $\mathcal{S}_{\bar{r}}$ encloses $B_r(p_k)$.
Define $\psi_k: \mathbb{R}^3\setminus B^{\mathbb{E}}_{L_k^{-1}\bar{r}}(0) \rightarrow  \mathbb{R}^3\setminus B^{\mathbb{E}}_{\bar{r}}(0)$ to be the scaling diffeomorphism given by $\psi_k( \tilde{x}) = L_k \tilde{x}$. It follows from uniform asymptotic flatness that
\begin{equation}
\left( \mathbb{R}^3\setminus B^{\mathbb{E}}_{L_k^{-1}\bar{r}}(0) , \tilde{g}_k:=L_k^{-2} (\Phi_k^{-1}\circ \psi_k)^* g_k\right)\xrightarrow{C^{2}_{\mathrm{loc}}} \left( \mathbb{R}^3 \setminus \{0\},\delta\right),
\end{equation}
and
\begin{equation}
\varepsilon_0 \leq|\partial_{x^i}-\sigma_{i,k}'|_{g_k}(q_{i,k}^*)
=|\partial_{\tilde{x}^i}-\tilde{\sigma}_{i,k}'|_{\tilde{g}_k}(\tilde{q}_{i,k}^*),
\end{equation}
where $\tilde{q}_{i,k}^*=(\psi_k^{-1}\circ\Phi_k)(q_{i,k}^*)$ and  $\tilde{\sigma}_{i,k}$ is $\psi_k^{-1}\circ\Phi_k$ composed with the part of $\sigma_{i,k}$ lying outside $M_{\bar{r}}$. Observe that our definitions imply that $\tilde{q}_{i,k} \to e_i$.

Fix a small $\delta>0$. The assumption that $\mathcal{S}_{\bar{r}}$ encloses $B_r(p_k)$ guarantees that for large enough $L_k$,
$\tilde{\sigma}_{i,k}$ must connect  $\tilde{q}_{i,k}^*$ to the sphere $S_{\delta}$. Since $\tilde{\sigma}_{i,k}$ is a geodesic with respect to $\tilde{g}_k$, it must have a subsequence that converges to a Euclidean geodesic $\tilde{\sigma}_{i, \infty}$ that connects $e_i$ to a point on $S_{\delta}$.
Since we understand the direction of this line segment at $e_i$ well, it is easy to see that we can force $|\partial_{\tilde{x}^i}-\tilde{\sigma}_{i,\infty}'|(e_i) <\tfrac{1}{2}\varepsilon_0$ by choosing $\delta$ sufficiently small. It then follows that for large enough $k$, we have
\begin{equation}
|\partial_{\tilde{x}^i}-\bar{\sigma}_{i,k}'|_{\tilde{g}_k}(\tilde{q}_{i,k}^*)
<{\varepsilon_0},
\end{equation}
which is a contradiction.
\end{proof}

Proposition~\ref{lem:L2est} tells us that $m(g)$ bounds the $L^2$ integral of $|\nabla^2 u|$. In the following proposition, we will apply the segment inequality (Theorem~\ref{segmentlemma}) in a similar manner to how Cheeger and Colding applied it in~\cite{CC}, in order to obtain estimates that hold along certain geodesics. We will also integrate along geodesics that go out to (near) $q_{\pm i}$ in order to bring the estimates from Lemma~\ref{hhhh} inward from the asymptotic regime to certain geodesic segments inside the ball $B_r(p)$. This proposition will serve as the main technical tool to establish the almost Pythagorean identity in the next section.

\begin{proposition}\label{prop:technical}
Let $(M,g,p) \in \mathcal{M}$, $0<\rho<\min(r,1)$, and  $\varepsilon>0$. Then for sufficiently small $m(g)$, depending on $r$, $\rho$, and $\varepsilon$, the following holds. For each $i=1, 2, 3$, let $\Sigma^i(y)$ denote the $u^i$ level set containing $y$. For any points $x$ and $y$ satisfying $B_{\rho}(x),B_{\rho}(y)\subset B_{r}(p)$, there exist points $x^*\in B_\rho(x)$, $y^*\in B_\rho(y)$, $z\in \Sigma^i(y)$, and a minimizing geodesic $\sigma$ from $x^*$ to $z$ such that for any $s\in[0,d(x^*,z)]$ and any minimizing geodesic $\gamma_s$ from $y^*$ to $\sigma(s)$, we have
\begin{equation}\label{eq:prop1.3}
\int_0^{d(x^*,z)}\int_0^{d(y^*,\sigma(s))}\sum_{j=1}^3|\nabla^2 u^j|(\gamma_s(t))\,dt\,ds< \varepsilon.
\end{equation}
Moreover, for some choice of $\pm$, we have
\begin{equation}\label{eq:prop1.1}
\int_0^{d(x^*,z)}\left(|\pm \nabla u^i-\sigma'|+ \sum_{j\ne i} |\langle\nabla u^j,\sigma'\rangle|\right) (\sigma(s))\,ds< \varepsilon.
\end{equation}
\end{proposition}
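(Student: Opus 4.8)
\textbf{Proof proposal for Proposition~\ref{prop:technical}.}

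The plan is to apply the Cheeger--Colding segment inequality (Theorem~\ref{segmentlemma}) twice in succession to the nonnegative function $f := \sum_{j=1}^3 |\nabla^2 u^j|$, once to control integrals of $f$ along geodesics $\sigma$ running from a point near $x$ out to a level set of $u^i$, and once more, in a nested fashion, to control the double integral over geodesics $\gamma_s$ running from a point near $y$ to the various points $\sigma(s)$. First I would fix a large radius $R$ (much bigger than $r$ but controlled, so that Bishop--Gromov applies) so that $B_{2R}(p)$ contains $B_1(q_{\pm i})$ and everything of interest; by Proposition~\ref{lem:L2est} and Cauchy--Schwarz together with the Bishop--Gromov volume bound on $B_{2R}(p)$, we have $\int_{B_{2R}(p)} f \, dV \le C(R)\, \sqrt{m(g)} =: \eta(m(g))$, which tends to $0$ as $m(g)\to 0$. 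This is the quantity that will drive everything to be small.

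The key structural step is the iterated use of the segment inequality. For the outer integration, I would not connect $x$ directly to $y$'s level set, but rather exploit the far-away points: apply the mean-value-type inequality~\eqref{MV} combined with the segment inequality on the pair of small balls $B_\rho(x)$ and $B_1(q_{\pm i})$ to produce $x^* \in B_\rho(x)$ and $q_i^* \in B_1(q_{\pm i})$ such that the minimizing geodesic $\sigma$ from $x^*$ to $q_i^*$ has $\int_0^{d(x^*,q_i^*)} f(\sigma(s))\,ds$ bounded by $C\big(\tfrac1{|B_\rho(x)|} + \tfrac1{|B_1(q_{\pm i})|}\big)\eta(m(g))$; here the factor $|B_\rho(x)|^{-1}$ is controlled (below) by Bishop--Gromov in terms of $\rho$ and $r$. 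The point $z \in \Sigma^i(y)$ is then taken to be the point where $\sigma$ crosses the level set $\{u^i = u^i(y^*)\}$ — such a crossing must exist because, as we travel along $\sigma$ toward $q_i^*$, the value of $u^i$ is driven up (or down) by the near-tangency $\nabla u^i \approx \pm\sigma'$ from Lemma~\ref{hhhh} applied at $q_i^*$, so $u^i$ along $\sigma$ sweeps past the value $u^i(y^*)$; one truncates $\sigma$ at the first such crossing, which only decreases the integral of the nonnegative $f$. The integrand in~\eqref{eq:prop1.1} is then $\sqrt f$-free: it equals $|{\pm}\nabla u^i - \sigma'| + \sum_{j\ne i}|\langle \nabla u^j, \sigma'\rangle|$ along $\sigma$, and since $\nabla u^j(\sigma(s)) = \nabla u^j(q_i^*) + \int_s^{d} (\text{derivative along }\sigma)$, this quantity at $\sigma(s)$ is bounded by its value at $q_i^*$ — small by Lemma~\ref{hhhh} — plus $\int_0^d |\nabla^2 u^j|(\sigma(s))\,ds$, which we have just arranged to be small; integrating over $s\in[0,d]$ and using that $d = d(x^*,z) \le d(x^*,q_i^*)$ is bounded (by the diameter of $B_{2R}(p)$, controlled via Bishop--Gromov) yields~\eqref{eq:prop1.1}.

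For~\eqref{eq:prop1.3}, I would apply the segment inequality a second time, now with $\Omega_1 = B_\rho(y)$ fixed and $\Omega_2$ ranging over a suitable cover: morally one wants, for each $s$, a point $y^*$ near $y$ such that geodesics from $y^*$ to $\sigma(s)$ have small $f$-integral, but $y^*$ must be chosen \emph{uniformly in $s$}. The standard Cheeger--Colding device handles this: treat $\sigma(s)$ as a second variable, integrate $\mathcal{F}_f(y^*, \sigma(s))$ over $y^* \in B_\rho(y)$ and over $s$, apply the segment inequality for each fixed $s$ (with $\Omega_2$ a tiny ball around $\sigma(s)$, using Bishop--Gromov to absorb $|\Omega_2|^{-1}$ into the volume of $B_{2R}(p)$ via relative comparison and the fact that $\sigma(s) \in B_{2R}(p)$), then integrate in $s$ and invoke~\eqref{MV} in the variable $y^*$ over $B_\rho(y)$ to extract a single good $y^*$ for which $\int_0^{d(x^*,z)} \mathcal F_f(y^*,\sigma(s))\,ds$ is bounded by $C(r,\rho)\,\eta(m(g))$. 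Taking $m(g)$ small enough makes both bounds less than $\varepsilon$, and one must only double-check that the finitely many choices ($x^*$ for~\eqref{eq:prop1.1}, then $y^*$ and $z$ for~\eqref{eq:prop1.3}) can be made compatibly — which they can, since~\eqref{eq:prop1.1} fixes $x^*$ and $\sigma$ first, after which~\eqref{eq:prop1.3} is a statement only about $y^*$ relative to the already-fixed $\sigma$. The main obstacle I anticipate is exactly this uniform-in-$s$ selection of $y^*$: making the nested segment-inequality application rigorous requires care about measurability of $s \mapsto \sigma(s)$, about the dependence of the constant $C_{\mathrm s}$ on the radius (which grows with $R$, hence must be fixed before $m(g)$ is sent to zero), and about the Bishop--Gromov lower bounds $|B_\rho(x)|, |B_\rho(y)| \ge c(\rho, r, \kappa) > 0$ that keep the reciprocal-volume factors under control — none individually deep, but collectively the delicate bookkeeping of the argument.
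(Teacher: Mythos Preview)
Your overall strategy---two applications of the segment inequality, with the far-away points $q_{\pm i}$ serving as anchors---is correct, but the nested application is carried out in the wrong order, and this is a genuine failure rather than bookkeeping. You first fix $x^*$ and the geodesic $\sigma$, and only afterwards try to select $y^*$ so that $\int_0^{d(x^*,z)} \mathcal{F}_f(y^*,\sigma(s))\,ds$ is small. Once $\sigma$ is a fixed curve its image has measure zero, and the segment inequality---which bounds \emph{volume} integrals of $\mathcal{F}_f$---gives no control over that line integral. Your proposed remedy, taking $\Omega_2$ to be a tiny ball $B_\delta(\sigma(s))$ and ``absorbing $|\Omega_2|^{-1}$ via Bishop--Gromov,'' does not work: after dividing through, the right-hand side carries a factor $|B_\delta(\sigma(s))|^{-1}$, and Bishop--Gromov only lower-bounds this volume by something that still vanishes with~$\delta$. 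The paper reverses the order. One first applies the segment inequality with $\Omega_1 = B_\rho(y)$ and $\Omega_2 = B_{4L}(p)$---a \emph{large} ball, not a small one---and uses~\eqref{MV} in the $\xi_1$-variable alone to find $y^*\in B_\rho(y)$ for which $\int_{B_{4L}(p)} \mathcal{F}_f(y^*,\cdot)\,dV$ is small. One then applies the segment inequality a second time to the augmented nonnegative function $f_{y^*} := f + \mathcal{F}_f(y^*,\cdot)$, whose $L^1$ norm on $B_{4L}(p)$ was just bounded, with $\Omega_1=B_\rho(x)$ and $\Omega_2=B_\rho(q_{+i})$, and uses~\eqref{MV} on the full product to find $x^*, q_i^*$ with $\mathcal{F}_{f_{y^*}}(x^*,q_i^*)$ small. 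Unwinding, the $\mathcal{F}_f(y^*,\cdot)$-part of this line integral along the resulting $\sigma$ is exactly the double integral~\eqref{eq:prop1.3}.

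There is a second gap in your treatment of~\eqref{eq:prop1.1}. You bound $d(x^*,z)$ only by $d(x^*,q_i^*)$, which is of order $L$. After integrating in $s$, this length multiplies the Lemma~\ref{hhhh} error at $q_i^*$; but that error contains, among other things, the angle subtended by $B_r(p)$ as seen from $q_i^*$, which is of order $r/L$, so the product is of order $r$ and does not tend to zero as $L\to\infty$. One needs $d(p,z)\le \mathcal{C}_0(r)$ with $\mathcal{C}_0(r)$ \emph{independent of $L$}, and this is not automatic: in the paper it is proved as a separate result (Lemma~\ref{lem:zdist}) via a blow-down argument that uses $u^i(z)=u^i(y)$ together with~\eqref{567ty} to pin $z$ near the $\{x^i=0\}$ hyperplane in rescaled coordinates.
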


\begin{figure}
\includegraphics[scale=.6]{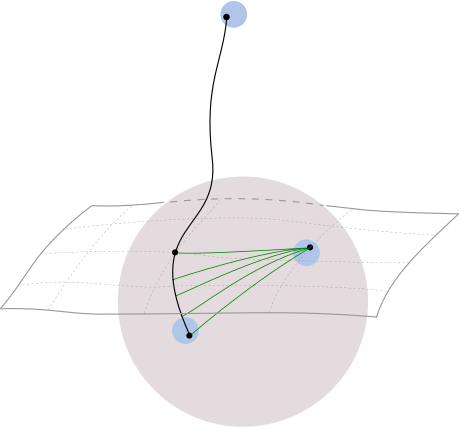}
\begin{picture}(-10,0)
\put(-160,248){\large{$q_i^*$}}
\put(-150,180){\large{$\sigma$}}
\put(-187,110){\large{$z$}}
\put(-90,115){\large{$y^*$}}
\put(-167,43){\large{$x^*$}}
\put(-135,75){\large{$\{\gamma_s\}$}}
\put(-20,135){\large{$\Sigma^i(y)$}}
\end{picture}
\caption{A depiction of the objects appearing in the proof of Proposition \ref{prop:technical}. Here $\Sigma^i(y)$ is the level set of $u^i$ passing through $y$, and the large purple ball represents $B_r(p)$, whereas the small blue balls represent $B_\rho(x)$, $B_\rho(y)$, and $B_\rho(q_{+i})$.}\label{pic:technical}
\end{figure}

\begin{proof}
Let $(M,g,p) \in \mathcal{M}$, $0<\rho<\min(r,1)$, and  $\varepsilon>0$, and fix $i$. Suppose that $x,y$ are points such that $B_{\rho}(x),B_{\rho}(y)\subset B_{r}(p)$. Our goal is to show that for any $\varepsilon>0$, we can find points $x^*$, $y^*$, $z$ and geodesics such that the estimates~\eqref{eq:prop1.3} and~\eqref{eq:prop1.1} hold for sufficiently small $m(g)$.

In order to find these points and geodesics, we fix $L>r$, which gives rise to points $q_{\pm i}$, and later we will see how large $L$ needs to be (depending on $r$, $\rho$, and $\varepsilon$) for the following constructions to give the desired estimates. For ease of notation, let us define $f:=\sum_{j=1}^3|\nabla^2 u^j|$.  As long as $L>r$,  we can apply the Cheeger-Colding segment inequality (Theorem~\ref{segmentlemma}) to the function $f$ on $B_{8L}(p)$ with $\Omega_1 = B_\rho(y)$ and $\Omega_2 =B_{4L}(p)$ to obtain
 \begin{equation}
 \int_{B_{\rho}(y)\times B_{4L}(p)}\mathcal{F}_{f} (\xi_1, \xi_2)\, d\xi_1\, d\xi_2
\leq C_{\mathrm{s}}(4L)\left(|B_{\rho}(y)|+|B_{4L}(p)|\right)\int_{B_{8L}(p)} f\,dV,
 \end{equation}
where $d\xi_1$, $d\xi_2$, and $dV$ are all notations for the Riemannian volume measure with respect to $g$.  Applying the mean value-type inequality \eqref{MV} to the $d\xi_1$ integral on the left, we see that there exists  $y^*\in B_{\rho}(y)$ such that
\begin{equation}\label{eq:prop2-1}
\int_{B_{4L}(p)}\mathcal{F}_{f}(y^*,\xi_2)\,d\xi_2 \leq
2 C_{\mathrm{s}}(4L)\left(1+\frac{|B_{4L}(p)|}{|B_{\rho}(y)|}\right)\int_{B_{8L}(p)}f\, dV.
\end{equation}
We apply the segment inequality again, but this time to the function $f_{y^*}(\xi):=f(\xi)+ \mathcal{F}_{f}(y^*,\xi)$ on $B_{4L}(p)$ with
$\Omega_1 = B_\rho(x)$ and $\Omega_2=B_\rho(q_{+i})$. Note that for sufficiently large $L$, we have $B_\rho(q_{+ i})\subset B_{2L}(p)$, so the segment inequality applies. Thus
\begin{align}
\begin{split}
\int_{B_{\rho}(x)\times B_{\rho}(q_{+ i})}&\left(\mathcal{F}_{f} (\xi_1,\xi_2)+ \mathcal{F}_{\mathcal{F}_{f}(y^*,\cdot)}(\xi_1, \xi_2)\right)\, d\xi_1\, d\xi_2 \\
\le &  C_{\mathrm{s}}(2L)\left(|B_{\rho}(x)|+|B_{\rho}(q_{+ i})|\right)
\int_{B_{4L}(p)}  \left(f+ \mathcal{F}_{f}(y^*,\cdot) \right)\,dV.
\end{split}
\end{align}
This time we apply the mean value-type inequality \eqref{MV} to the full product integral on the left to find  $x^*\in B_{\rho}(x)$ and $q^*_{i}\in B_{\rho}(q_{+ i})$ so that
\begin{align}\label{eq:prop2-2}
\begin{split}
\mathcal{F}_{f}(x^*,q^*_{ i})+&\mathcal{F}_{\mathcal{F}_{f}(y^*,\cdot)}(x^*, q^*_{ i})\\
\leq & 2  C_{\mathrm{s}}(2L)\left(\frac{1}{|B_{\rho}(x)|}+\frac{1}{|B_{\rho}(q_{+ i})|}\right)
\left(\int_{B_{4L}(p)}f\, dV+\int_{B_{4L}(p)}\mathcal{F}_{f}(y^*,\cdot) \,dV\right).
\end{split}
\end{align}
Combining \eqref{eq:prop2-1} and \eqref{eq:prop2-2} then produces
\begin{align}
\begin{split}
\mathcal{F}_{f}(x^*,q^*_{ i}) &+\mathcal{F}_{\mathcal{F}_{f}(y^*,\cdot)}(x^*, q^*_{ i})\\
&\le C_1(L) \left(\frac{1}{|B_{\rho}(x)|}+\frac{1}{|B_{\rho}(q_{+ i})|}\right)\left(1+\frac{|B_{4L}(p)|}{|B_{\rho}(y)|}\right)
\int_{B_{8L}(p)}f \, dV\\
&\le C_1(L) \left(\frac{1}{|B_{\rho}(x)|}+\frac{1}{|B_{\rho}(q_{+ i})|}\right)\left(1+\frac{|B_{4L}(p)|}{|B_{\rho}(y)|}\right) |B_{8L}(p)|^{\frac{1}{2}} \left(\int_{B_{8L}(p)} f^2\,dV\right)^{\frac{1}{2}} \\
&\le C_1(L) \left(\frac{1}{|B_{\rho}(x)|}+\frac{1}{|B_{\rho}(q_{+ i})|}\right)\left(1+\frac{|B_{4L}(p)|}{|B_{\rho}(y)|}\right) |B_{8L}(p)|^{\frac{1}{2}} \left(144\pi C_{\mathrm{g}} m(g)\right)^{\frac{1}{2}}, \label{eq:prop2-6}
\end{split}
\end{align}
where the second inequality follows from H\"{o}lder's inequality, and the last inequality follows from~\eqref{eq:L2est} in Proposition~\ref{lem:L2est} and the definition of $f$.

Ricci volume comparison tells us that $|B_{8L}(p)|\leq |B_{8L}^{-\kappa}|$, which is bounded above in terms of $L$. To bound the $\rho$-balls from below, note that uniform asymptotics tell us that if $B$ is any unit ball sufficiently far out in the asymptotically flat end, then $|B|> 1$ (since $\frac{4\pi}{3}$ is greater than $1$). For $\xi$ equal to any of $x$, $y$, or $p$, if $L$ is large enough, then $B_{L} (\xi)$ will contain such a ball $B$, and thus $|B_{L} (\xi)|>1$. Then by Bishop-Gromov relative volume comparison (Theorem~\ref{thm:BG}), we have
\begin{equation}
|B_\rho(\xi)| \ge \frac{|B_\rho^{-\kappa}|}{|B_L^{-\kappa}|}\cdot |B_L(\xi)| >  \frac{|B_\rho^{-\kappa}|}{|B_L^{-\kappa}|},
\end{equation}
and thus $|B_\rho(\xi)|$ is bounded below by a positive constant depending on $L$ and $\rho$. Inserting these bounds into~\eqref{eq:prop2-6}, we have
\begin{equation}\label{eq:prop2-3}
\mathcal{F}_{f}(x^*,q^*_{ i})+\mathcal{F}_{\mathcal{F}_{f}(y^*,\cdot)}(x^*, q^*_{ i})
\le  C_{2}(L, \rho)\cdot m(g)^{\frac{1}{2}}.
\end{equation}
Both summands on the left-hand side of \eqref{eq:prop2-3} are nonnegative,  so the right-hand side of \eqref{eq:prop2-3} bounds each individually. If we unwind the meaning of the bound on the second term, it tells us that for any minimizing geodesic $\sigma$ from $x^*$ to $q^*_{ i}$, we have
\begin{equation}
\int_0^{d(x^*, q^*_{ i})} \mathcal{F}_{f}(y^*,\sigma(s))\,ds \le C_{2}(L, \rho)\cdot m(g)^{\frac{1}{2}},
\end{equation}
and then unwinding the definitions further, we see that for any minimizing geodesics $\gamma_s$ from $y^*$ to $\sigma(s)$, we have
\begin{equation}
\int_0^{d(x^*,q^*_i)}\int_0^{d(y^*,\sigma(s))}  f(\gamma_s(t))\,dt\,ds \le C_{2}(L, \rho)\cdot m(g)^{\frac{1}{2}}.
\end{equation}

Suppose for now that $u^i(x^*) \le u^i(y)$. We will discuss the general case at the end of the proof. Lemma~\ref{prop:Bartnik} guarantees that for sufficiently large~$L$, $u^i(q^*_i) > u^i(y)$, and thus the geodesic $\sigma$ must pass through the level set $\Sigma^i(y)$. We choose  $z$ to be such a point of intersection, and then~\eqref{eq:prop1.3} will hold for sufficiently small $m(g)$, depending on $r$, $\rho$, $\varepsilon$, and $L$ (while $L>r$ itself  needs to be chosen sufficiently large).

Meanwhile, the bound on the first term of~\eqref{eq:prop2-3} says that
\begin{equation}\label{eq:prop2-7}
\int_{0}^{d(x^*,q^*_{i})} \sum_{j=1}^3 |\nabla^2 u^j|(\sigma(s))\,ds \le C_{2}(L, \rho)\cdot m(g)^{\frac{1}{2}},
\end{equation}
and we will use this to help us prove the remaining estimate~\eqref{eq:prop1.1}.
This is the only part of the proof that requires us to take $L$ sufficiently large, depending on $\varepsilon$, since we want access to the estimate from Lemma~\ref{hhhh}. Fortunately, we can still allow the smallness of $m(g)$ to depend on $L$ in order to compensate for this.
For each $s\in[0,{d(x^*,z)}]$, we apply the fundamental theorem of calculus and the Cauchy-Schwarz inequality, together with~\eqref{eq:prop2-7} to find
\begin{align}\label{eq:prop2-4}
|\nabla u^i - \sigma'|(\sigma(s))-|\nabla u^i - \sigma'|(q^*_{i})&\leq \int_{d(x^*,\sigma(s))}^{d(x^*,q^*_{i})}|\nabla^2 u^i|(\sigma(\tilde{s}))\,d\tilde{s} \le C_{2}(L, \rho)\cdot m(g)^{\frac{1}{2}}, \\
\label{38vnsop}
|\langle\nabla u^j,\sigma'\rangle|(\sigma(s))-|\langle \nabla u^j,\sigma'\rangle|(q^*_{i})&\leq\int_{d(x^*,\sigma(s))}^{d(x^*,q^*_{i})}|\nabla^2 u^j|(\sigma(\tilde{s}))\,d\tilde{s}
\le C_{2}(L, \rho)\cdot m(g)^{\frac{1}{2}},
\end{align}
for $j\neq i$. Integrating \eqref{eq:prop2-4} and \eqref{38vnsop} over $s$ from  $0$ to $d(x^*,z)$, we find
\begin{align}\label{eq:prop2-5}
\begin{split}
&\int_0^{d(x^*,z)}\left(|\nabla u^i - \sigma'|(\sigma(s))+
\sum_{j\ne i}|\langle\nabla u^j,\sigma'\rangle|(\sigma(s))\right)ds\\
&\phantom{-------}\leq
d(x^*,z)\left( 3C_{2}(L,\rho)\cdot m(g)^{\frac{1}{2}}
+|\nabla u^i - \sigma'|(q^*_{i})+\sum_{j\ne i}|\langle\nabla u^j,\sigma'\rangle|(q^*_{i})\right).
\end{split}
\end{align}
We claim that for some $m_0(r,L,\rho)$, if $m(g)< m_0$, then $d(p, z)< \mathcal{C}_0(r)$ for some constant $\mathcal{C}_0(r)$ that depends only on $r$. Since the proof of this claim is fairly involved, we will prove it as a separate lemma (Lemma \ref{lem:zdist}) after the end of this proof.  Since $d(p, z)< \mathcal{C}_0(r)$, we have $d(x^*, z)<\mathcal{C}_0(r)+r$ by the triangle inequality.
By Lemma \ref{hhhh}, for sufficiently large $L>r$ depending on $\varepsilon$, we have
\begin{equation}\label{eq:prop2-8}
|\nabla u^i - \sigma'|(q^*_{i})+\sum_{j\ne i}|\langle\nabla u^j,\sigma'\rangle|(q^*_{i})< \frac{\varepsilon}{2(\mathcal{C}_0(r)+r)}.
\end{equation}
We now select a specific $L$, depending on $r$ and $\varepsilon$, large enough so that the above holds. Given this choice of $L$, we just need $m(g)$ small enough so that $C_{2}(L, \rho)\cdot m(g)^{\frac{1}{2}} <\varepsilon$ in order to ensure estimate~\eqref{eq:prop1.3}, and also
$(\mathcal{C}_0(r)+r)\cdot 3C_{2}(L,\rho)\cdot m(g)^{\frac{1}{2}} <\frac{\varepsilon}{2}$ and $m(g)<m_0(r, L, \rho)$, which combines with~\eqref{eq:prop2-5} and~\eqref{eq:prop2-8} to ensure estimate~\eqref{eq:prop1.1} with the ``$+$'' sign.

Note that we can do the exact same construction using $B_\rho(q_{-i})$ in place of $B_\rho(q_{+i})$. In this case, $u^i(q^*_i)<u^i(y)$ for large $L$, and \emph{if} $u(x^*) \ge u^i(y)$, then we obtain the point $z$ and the same argument proves the desired result, except that
now~\eqref{eq:prop1.1} has the ``$-$'' sign, because of the sign in Lemma~\ref{hhhh}. However, it is possible that the $x^*$ obtained from the $q_{+i}$ construction has $u(x^*) > u^i(y)$, while the $x^*$ from the $q_{-i}$ construction has $u(x^*) < u^i(y)$, in which case neither construction yields the desired $z$. This is not really a problem since  this can only happen if $B_\rho(x)$ intersects $\Sigma^i(y)$, which is actually a good case for our applications. Nonetheless, it is always possible to find the desired $z$ by performing the two constructions simultaneously to obtain a \emph{single} $x^*$ that works in both the $q_{+i}$ construction and the $q_{-i}$ construction, and then of course we must either have $u(x^*) \le u^i(y)$ or $u(x^*) \ge u^i(y)$.
Since we make a similar argument in the proof of Proposition~\ref{prop:tripleseg} below, we omit the details here. The proof of Proposition~\ref{prop:tripleseg} should make it clear that the claim being made here is valid, and that the details can be easily filled in.
\end{proof}

The next result confirms the claim used at the end of the proof of Proposition \ref{prop:technical}, concerning the boundedness of $d(p,z)$.

\begin{lemma}\label{lem:zdist}
Following the construction and notation used in the proof of Proposition \ref{prop:technical}, for any $r>0$, there exists a constant $\mathcal{C}_0(r)$ such that for any choices of $L$ and $\rho\in (0,\min(r,1))$, there exists $m_0(r, L, \rho)$ such that if $m(g)<m_0$, then for any $x,y$ with $B_{\rho}(x),B_{\rho}(y)\subset B_{r}(p)$, the resulting $z$ obtained in the proof of Proposition \ref{prop:technical} satisfies
\begin{equation}
d(p,z)\leq\mathcal{C}_0(r).
\end{equation}
\end{lemma}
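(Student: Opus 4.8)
The plan is to show that the point $z$, which lies on the level set $\Sigma^i(y)$ and on the geodesic $\sigma$ from $x^*$ to $q_i^*$, cannot escape too far from $p$, and the obstruction to this is precisely that $\nabla u^i$ is not far from being a unit-length coordinate direction along $\sigma$ as long as $m(g)$ is small. First I would record what we already know: by construction (equation \eqref{eq:prop2-7} in the proof of Proposition~\ref{prop:technical}, rewritten as \eqref{eq:prop2-4}) the quantity $|\nabla u^i - \sigma'|$ is controlled along $\sigma$ in an integrated sense, and Lemma~\ref{hhhh} controls it at the endpoint $q_i^*$. Combining these, for $m(g)$ sufficiently small (depending on $r,L,\rho$) and $L$ sufficiently large (depending on $\varepsilon$), we get that $|\nabla u^i - \sigma'|(\sigma(s))$ is small — say less than $\tfrac12$ — for all $s$, which in particular forces $|\nabla u^i|(\sigma(s)) \ge \tfrac12$ along the entire geodesic.

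Next I would exploit this lower bound on $|\nabla u^i|$ to bound the length of $\sigma$ between $x^*$ and $z$, or rather the difference $u^i(q_i^*) - u^i(x^*)$ relative to the arclength traveled. Parametrizing $\sigma$ by arclength, $\frac{d}{ds} u^i(\sigma(s)) = \langle \nabla u^i, \sigma'\rangle(\sigma(s)) \ge |\nabla u^i|(\sigma(s)) - |\nabla u^i - \sigma'|(\sigma(s)) \ge \tfrac12 - \tfrac12\cdot\text{(something small)}$, so $u^i$ is monotonically increasing along $\sigma$ at a definite rate. Hence the arclength from $x^*$ to the first point $z$ where $u^i(\sigma(s)) = u^i(y)$ is at most roughly $2\big(u^i(y) - u^i(x^*)\big)$. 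To turn this into a bound on $d(p,z)$, I would bound $|u^i(y) - u^i(x^*)|$: both $x^*$ and $y$ lie in $B_r(p)$, and the global gradient bound \eqref{globalgradient} from Proposition~\ref{lem:L2est} gives $|u^i(y) - u^i(p)| \le C_{\mathrm g} r$ and $|u^i(x^*) - u^i(p)| \le C_{\mathrm g} r$, so $|u^i(y) - u^i(x^*)| \le 2 C_{\mathrm g} r$. Therefore the arclength of $\sigma$ from $x^*$ to $z$ is at most a constant multiple of $C_{\mathrm g} r$, i.e. bounded purely in terms of $r$ (since $C_{\mathrm g}$ depends only on the suppressed parameters $b,\tau,\bar m,\kappa$). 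Then $d(p,z) \le d(p,x^*) + d(x^*,z) \le r + (\text{const}\cdot r) =: \mathcal{C}_0(r)$ by the triangle inequality, which is the claimed bound.

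The main obstacle — and the reason this is stated as a separate lemma rather than inlined — is the logical ordering of the choices of $L$ and $m(g)$: the smallness of $|\nabla u^i - \sigma'|$ along $\sigma$, which is what gives the definite rate of increase of $u^i$, depends on having $L$ large (for Lemma~\ref{hhhh}) and $m(g)$ small (for \eqref{eq:prop2-4}); but in the proof of Proposition~\ref{prop:technical} the constant $\mathcal{C}_0(r)$ is used \emph{before} $L$ is finally pinned down in \eqref{eq:prop2-8}. The resolution, which I would make explicit, is that $\mathcal{C}_0(r)$ need only depend on $r$ and not on $L$: the bound $|u^i(y) - u^i(x^*)| \le 2 C_{\mathrm g} r$ is independent of $L$, and we only need $|\nabla u^i - \sigma'| \le \tfrac12$ (a fixed threshold, not something tending to $0$) to extract the definite rate, so it suffices to choose $L$ large enough and then $m(g)$ small enough — both depending only on $r$ — to guarantee this fixed threshold along $\sigma$; any further largeness of $L$ demanded later by \eqref{eq:prop2-8} only makes the threshold easier to achieve. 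One technical point to handle carefully is that the integrated estimate \eqref{eq:prop2-4} controls $|\nabla u^i - \sigma'|$ at $\sigma(s)$ in terms of its value at $q_i^*$ plus $C_2(L,\rho) m(g)^{1/2}$, uniformly in $s$, so indeed the pointwise bound along all of $\sigma$ follows, and I would also note that $z$ is taken to be the first intersection of $\sigma$ with $\Sigma^i(y)$ so that the arclength estimate applies to the relevant segment.
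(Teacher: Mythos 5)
Your argument is correct in substance, but it takes a genuinely different route from the paper. The paper proves this lemma by contradiction together with a blow-down: assuming $d(p_k,z_k)\to\infty$, it rescales by $\lambda_k=|\Phi_k(z_k)|$, shows that the boundedness of $u^i(z_k)=u^i(y_k)$ forces the limit point $z_\infty$ onto the equator $\{x^i=0\}$ of the unit sphere, so that the limiting segment is nearly orthogonal to $\partial_{x^i}$ at $z_\infty$ (hence $\langle\nabla u^i,\sigma_k'\rangle(z_k)\to0$), while Lemma~\ref{hhhh} and \eqref{eq:prop2-4} force $|\nabla u^i-\sigma_k'|(z_k)\to0$ --- a contradiction. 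You instead argue directly: Lemma~\ref{hhhh} at $q_i^*$ plus the pointwise-in-$s$ estimate \eqref{eq:prop2-4} give $|\nabla u^i-\sigma'|\le\tfrac12$ along all of $\sigma$, whence $u^i$ increases along $\sigma$ at a definite rate, and the global gradient bound \eqref{globalgradient} caps $|u^i(y)-u^i(x^*)|$ by $2C_{\mathrm{g}}r$, yielding an explicit $\mathcal{C}_0(r)$ of the form $(1+4C_{\mathrm{g}})r$. This is more elementary and fully quantitative, and you correctly identify and resolve the key logical point, namely that $\mathcal{C}_0(r)$ must not depend on $L$ even though the threshold $\tfrac12$ requires $L\ge L_0(r)$. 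Two small repairs: the chain $\langle\nabla u^i,\sigma'\rangle\ge|\nabla u^i|-|\nabla u^i-\sigma'|$ only yields $\ge0$ with your thresholds; use instead $\langle\nabla u^i,\sigma'\rangle=|\sigma'|^2+\langle\nabla u^i-\sigma',\sigma'\rangle\ge1-|\nabla u^i-\sigma'|\ge\tfrac12$, which gives the definite rate (and also shows $z$ is the \emph{unique} intersection of $\sigma$ with $\Sigma^i(y)$, so the ``first intersection'' caveat is unnecessary). Second, since the lemma is stated for ``any choice of $L$,'' you should add the one-line observation that for $L\le L_0(r)$ the conclusion is trivial, because then $d(p,z)\le d(p,x^*)+d(x^*,q_i^*)$ is bounded by a constant depending only on $r$ and $L_0(r)$.
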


\begin{proof}
Suppose the statement is false. Then there exists $r>0$ such that for any positive integer $k$, there exist $L_k$ and $\rho_k\in(0,r)$, and a sequence
$\{((M_k)_l, (g_k)_l, (p_k)_l)\}_{l=1}^\infty$ in $\mathcal{M}$ with the property that $\lim_{l\to\infty} m((g_k)_l) =0$, and
there exist $(x_k)_l, (y_k)_l\in  (M_k)_l$ such that resulting  $(z_k)_l$ satisfies $d((p_k)_l ,(z_k)_l) >k$. In order to avoid use of double subscripts, we will ``pretend" that for each $k$, we have already selected an appropriate $l$ and just consider a single sequence $(M_k, g_k, p_k)$ with points $x_k$, $y_k$, $z_k$, and so on, but remember that we have the freedom to force $m(g_k)$ to approach zero as fast as we like (relative to $L_k$ and $\rho_k$) by choosing a suitably large $l$ for each $k$.

Without loss of generality, assume we are in the case where $z$ lies on a geodesic $\sigma_k$ from $x^*_k$ to $(q_{+i})_k$. We will obtain a contradiction by proving the following two convergence statements (for some subsequence):
\begin{align}\label{jfjfjfj}
\langle\nabla u_k^i ,\sigma'_k \rangle_{g_k}(z_k)&\rightarrow 0, \\
\label{ooiu}
|\nabla u_k^i -\sigma'_k|_{g_k}(z_k) &\rightarrow 0,
\end{align}
which are in obvious conflict.


Since $d(p_k, z_k)\to\infty$, it follows that the coordinate radius of $z_k$, which we will denote by $\lambda_k:=|\Phi_k(z_k)|$, also approaches infinity.
Then by Lemma~\ref{prop:Bartnik}, we can see that in order to prove~\eqref{jfjfjfj}, it suffices to show that
\begin{equation}\label{jfjfjfj2}
\langle \partial_{x^i} ,\sigma'_k \rangle_{g_k}(z_k)\rightarrow 0.
\end{equation}
Next we follow the same blow-down argument as in the proof of Lemma~\ref{hhhh}. There is a radius $\bar{r}$ independent of $k$ such that $\mathcal{S}_{\bar{r}}$ encloses
$B_r(p_k)$. We consider the scaling diffeomorphism $\psi_k(\tilde{x})=\lambda_k \tilde{x}$ and the metric $\tilde{g}_k:=\lambda_k^{-2}(\Phi_k^{-1}\circ\psi_k)^* g_k$ on the $\rr^3\setminus B^{\mathbb{E}}_{\lambda_k^{-1} \bar{r}}(0)$, which converges in $C^2$ to the Euclidean metric on  $\rr^3\setminus B^{\mathbb{E}}_{\delta}(0)$ for any fixed $\delta>0$. Meanwhile $\tilde{z}_k:=(\psi_k^{-1}\circ \Phi_k) (z_k)$ subconverges to some $z_\infty$ on the unit sphere, and the geodesics $\tilde{\sigma}_k:= \psi_k^{-1}\circ \Phi_k\circ \sigma_k$ subconverge to a Euclidean line segment $\sigma_\infty$ joining $z_\infty$ to a point on $S_{\delta}$.

We claim that the $i$-th coordinate of $z_\infty$ must be zero.
From the proof of Lemma~\ref{prop:Bartnik}, specifically~\eqref{weighted_estimate}, we know that there exist constants $c_k$ such that for all $\xi\in M_k\setminus (M_k)_{\bar{r}}$,
\begin{equation}\label{giabngk}
|u_k^i (\xi)- \Phi_k^i(\xi)  -c_k|\le C_1 |\Phi_k(\xi)|^{1-\tau},
\end{equation}
where $C_1$ is independent of $k$. Using the fact that $u^i_k(z_k) = u^i_k(y_k)$, we then have
\begin{equation}\label{phikzk} 
|\Phi_k^i(z_k)| \le |u_k^i(y_k)-c_k|+C_1 \lambda_k^{1-\tau}.
\end{equation}
Since $y_k\in B_r(p)$, the first term on the right is uniformly bounded in $k$ by the maximum principle. Indeed, the function $u_k^i -c_k$ is harmonic on $(M_k)_{2\bar{r}}$ and its Dirichlet boundary values on this domain are uniformly controlled by \eqref{giabngk}. Dividing~\eqref{phikzk} by $\lambda_k$, it follows that the $i$-th coordinate of $\tilde{z}_k$ converges to zero, proving the claim.

Using the claim, we see that $\sigma_\infty$ is a line segment from the point $z_\infty$ in the $x^i=0$ equator of the unit sphere to a point on $S_{\delta}$, and so this Euclidean line segment must be nearly orthogonal to $\partial_{x^i}$ at $z_\infty$. Since $\delta$ can be chosen to be arbitrarily small, it follows that
\begin{equation}
\langle \partial_{x^i} ,\sigma'_k \rangle_{g_k}(z_k) = \langle \partial_{x^i} ,\tilde{\sigma}'_k \rangle_{\tilde{g}_k}(\tilde{z}_k)\rightarrow 0,
\end{equation}
completing the proof of~\eqref{jfjfjfj2} and hence~\eqref{jfjfjfj}.

Next we prove~\eqref{ooiu}, and this is where we will use the fact that $m(g_k)\to0$ as quickly as we want. We also need $L_k\to \infty$, which is true since
\begin{equation}
k< d(p_k, z_k) \le d(p_k, x_k^*)  + d(x_k^*, z)   \le r+  d(x^*, (q_i^*)_k ),
\end{equation}
and if $L_k$ were bounded in $k$, then the right side of the inequality above would be bounded in $k$.
Since $L_k\to\infty$, we can use Lemma~\ref{hhhh} to see that
\begin{equation}
|\nabla u_k^i -\sigma'_k|_{g_k}((q_{i}^*)_k)\rightarrow0.
\end{equation}
Combining this with~\eqref{eq:prop2-4}, we obtain the desired convergence~\eqref{ooiu} as long as
\begin{equation}
C_{2}(L_k, \rho_k)\cdot m(g_k)^{\frac{1}{2}} \rightarrow0,
\end{equation}
where $C_{2}(L_k, \rho_k)$ is the same constant described in the proof of Proposition~\ref{prop:technical}.
But as explained at the beginning of the proof, it is possible to find such a sequence. Hence we have the desired contradiction and the result follows.
\end{proof}

In~\eqref{eq:prop2-7}, we saw how to obtain a Hessian bound along a geodesic connecting two points, as long as we are willing to perturb the end points. In the next proposition, we generalize this idea to  obtain Hessian bounds along  two geodesics, simultaneously, which connect three (perturbed) points. We do this so that we can obtain Hessian bounds along one of those geodesics, while simultaneously having $\{\nabla u^i\}_{i=1}^3$ almost form an orthonormal frame at one of the endpoints.

\begin{proposition}\label{prop:tripleseg}
Let $(M,g,p) \in \mathcal{M}$, $0<\rho<\min(r,1)$, and  $\varepsilon>0$. Then for sufficiently small $m(g)$, depending on $r$, $\rho$, and $\varepsilon$, the following holds.
For any points $x$ and $y$ satisfying $B_{\rho}(x),B_{\rho}(y)\subset B_{r}(p)$, there exist points $x^*\in B_\rho(x)$, $y^*\in B_\rho(y)$, and a minimizing geodesics $\gamma$ from $y^*$ to $x^*$ such that
\begin{equation}\label{eq:hessgeo2}
\int_0^{d(y^*,x^*)}\sum_{j=1}^{3}|\nabla^2 u^j|(\gamma(s))\,ds < \varepsilon,
\end{equation}
and
\begin{equation}\label{o3qjgoha}
\sum_{i,j=1}^{3}|\langle\nabla u^i,\nabla u^j \rangle(y^*)-\delta^{ij}|< \varepsilon.
\end{equation}
\end{proposition}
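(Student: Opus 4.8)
The plan is to combine Proposition~\ref{prop:technical} (applied in all three coordinate directions simultaneously) with the fact that, near any far-away point $q_{\pm i}$, the gradients $\{\nabla u^j\}_{j=1}^3$ almost form an orthonormal frame by Lemma~\ref{prop:Bartnik} and uniform asymptotics; the Hessian bound~\eqref{eq:prop2-7} then transports this ``almost orthonormality'' inward along a geodesic to a point of $B_\rho(x)$, and then transports a Hessian estimate from there to a point of $B_\rho(y)$. First, I would fix $L$ large (to be chosen depending on $r,\varepsilon$) and run the segment-inequality machinery of Proposition~\ref{prop:technical} in a nested fashion, but now with all three directions $i=1,2,3$ at once. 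Concretely: apply the Cheeger--Colding segment inequality (Theorem~\ref{segmentlemma}) to $f=\sum_{j=1}^3|\nabla^2 u^j|$ on $B_{CL}(p)$ with $\Omega_1=B_\rho(y)$ and $\Omega_2=B_{C'L}(p)$, use the mean value-type inequality~\eqref{MV} to extract $y^*\in B_\rho(y)$ with $\int_{B_{C'L}(p)}\mathcal F_f(y^*,\xi)\,d\xi$ small; then apply the segment inequality again to the augmented function $f+\mathcal F_f(y^*,\cdot)$ with $\Omega_1=B_\rho(x)$ and $\Omega_2 = B_\rho(q_{+i})\cup B_\rho(q_{-i})$ (or just run it once for each of the six balls $B_\rho(q_{\pm i})$ and take the worst case). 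Using the mean value-type inequality on the full product integral, I extract a \emph{single} $x^*\in B_\rho(x)$ together with points $q_i^*\in B_\rho(q_{\pm i})$ (one for each $i$, for both signs) so that for the minimizing geodesic $\sigma_i$ from $x^*$ to $q_i^*$,
\begin{equation}
\int_0^{d(x^*,q_i^*)}\sum_{j=1}^3|\nabla^2 u^j|(\sigma_i(s))\,ds + \int_0^{d(x^*,q_i^*)}\mathcal F_f(y^*,\sigma_i(s))\,ds \le C(L,\rho)\cdot m(g)^{1/2},
\end{equation}
exactly as in~\eqref{eq:prop2-3}--\eqref{eq:prop2-7}, where the volume lower bounds on $B_\rho(x),B_\rho(y),B_\rho(q_{\pm i})$ come from Bishop--Gromov (Theorem~\ref{thm:BG}) and the volume upper bound $|B_{CL}(p)|\le|B_{CL}^{-\kappa}|$. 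The key point of using a single $x^*$ for all six balls is that it lets us later pick whichever sign of $\pm i$ is needed without losing the simultaneous Hessian control.

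Second, I would establish~\eqref{o3qjgoha}, the almost-orthonormality at $y^*$. By the fundamental theorem of calculus and Cauchy--Schwarz applied along $\sigma_i$ (as in~\eqref{eq:prop2-4}--\eqref{38vnsop}), the quantities $|\langle\nabla u^a,\nabla u^b\rangle|$ change by at most $\int_0^{d(x^*,q_i^*)}(|\nabla^2 u^a|+|\nabla^2 u^b|)(\sigma_i(s))\,ds\le C(L,\rho)m(g)^{1/2}$ as we move from $q_i^*$ to $x^*$ — provided $d(x^*,q_i^*)$ is controlled, which follows from $d(p,z)\le\mathcal C_0(r)$-type reasoning (Lemma~\ref{lem:zdist}) together with the triangle inequality, or more simply since $\sigma_i$ is minimizing and $q_i^*\in B_\rho(q_{\pm i})$ is at coordinate radius $\approx L$, so $d(x^*,q_i^*)\le r + (\text{const})L$; the integrand being $O(m(g)^{1/2})$ after integration means we need $m(g)$ small depending on $L$. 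At the far point $q_i^*$, Lemma~\ref{prop:Bartnik} together with uniform asymptotic flatness gives $|\langle\nabla u^a,\nabla u^b\rangle(q_i^*)-\delta^{ab}|$ small once $L$ is large. Then I transport from $x^*$ to $y^*$: let $\gamma$ be a minimizing geodesic from $y^*$ to $x^*$. To bound $|\langle\nabla u^a,\nabla u^b\rangle(y^*)-\langle\nabla u^a,\nabla u^b\rangle(x^*)|$ I need the Hessian integral of $f$ along $\gamma$ to be small; this is precisely what the second term $\int_0^{d(x^*,q_i^*)}\mathcal F_f(y^*,\sigma_i(s))\,ds\le C(L,\rho)m(g)^{1/2}$ gives after one more application of the mean value-type inequality~\eqref{MV} to choose the parameter $s$ (equivalently a point $\sigma_i(s)$) so that $\int_0^{d(y^*,\sigma_i(s))}f(\gamma_s(t))\,dt$ is small — but here I must be a little careful, since I want the geodesic to reach $x^*$ itself, not merely a point $\sigma_i(s)$. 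The cleanest fix: observe that $x^* = \sigma_i(0)$ is already an endpoint, so $\gamma_0$ is a geodesic from $y^*$ to $x^*$ and $\int_0^{d(y^*,x^*)}f(\gamma_0(t))\,dt \le \mathcal F_f(y^*,x^*) = \mathcal F_f(y^*,\sigma_i(0))$, and this last quantity is bounded by $C(L,\rho)m(g)^{1/2}$ directly from the $\mathcal F_{\mathcal F_f(y^*,\cdot)}(x^*,q_i^*)$ term (which dominates $\mathcal F_f(y^*,\sigma_i(s))$ for every $s$, in particular $s=0$). So set $\gamma:=\gamma_0$; then~\eqref{eq:hessgeo2} holds with $\varepsilon$ in place of $C(L,\rho)m(g)^{1/2}$ for $m(g)$ small, and simultaneously $|\langle\nabla u^a,\nabla u^b\rangle(y^*)-\langle\nabla u^a,\nabla u^b\rangle(x^*)| \le 2\int_0^{d(y^*,x^*)}f(\gamma(t))\,dt \le C(L,\rho)m(g)^{1/2}$. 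Combining the three estimates — almost orthonormality at $q_i^*$ (needs $L$ large), transport from $q_i^*$ to $x^*$ along $\sigma_i$ (needs $m(g)$ small given $L$), transport from $x^*$ to $y^*$ along $\gamma$ (needs $m(g)$ small given $L$) — and summing over the finitely many pairs $(a,b)$ yields~\eqref{o3qjgoha}.

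Third, the sign bookkeeping: at $x^*$ we have at our disposal, for each $i$, geodesics $\sigma_i^{\pm}$ to points near $q_{\pm i}$; by Lemma~\ref{hhhh} the value $\langle\nabla u^a,\nabla u^b\rangle$ at those far points is close to $\delta^{ab}$ regardless of sign (the sign only affects $\pm\nabla u^i$ versus the geodesic tangent, not the inner products of gradients with each other), so \emph{any} consistent choice of signs works for~\eqref{o3qjgoha}; we do not actually face the dichotomy about $u^i(x^*)$ versus $u^i(y)$ that appeared in Proposition~\ref{prop:technical}, because here there is no level set $z$ to hit. This is why this proposition is technically cleaner, and why the authors defer the analogous sign discussion in Proposition~\ref{prop:technical} to this proof. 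Finally, I choose $L=L(r,\varepsilon)$ large enough that the asymptotic-flatness contributions (Lemma~\ref{prop:Bartnik}, and $|B_\rho^{-\kappa}|/|B_L^{-\kappa}|$-type bounds being harmless since they are absorbed into $C(L,\rho)$) are each less than $\varepsilon/N$ for a suitable fixed $N$, and then $m(g)$ small enough (depending on $r,\rho,\varepsilon,L$, hence ultimately only on $r,\rho,\varepsilon$) so that every $C(L,\rho)m(g)^{1/2}$ term is $<\varepsilon/N$; this gives~\eqref{eq:hessgeo2} and~\eqref{o3qjgoha}. The main obstacle I anticipate is the careful handling of the ``single $x^*$ works for all six far balls'' step together with making sure the geodesic $\gamma$ used in~\eqref{eq:hessgeo2} is literally an endpoint geodesic $\gamma_0$ of the $\gamma_s$ family (so that the doubly-iterated segment inequality bound applies verbatim), and checking that $d(x^*,q_i^*)$ — hence the length of the interval over which we integrate the $O(m(g)^{1/2})$ Hessian density in the FTC step — is bounded by something depending only on $L$ and $r$, so that the product with $m(g)^{1/2}$ still tends to zero.
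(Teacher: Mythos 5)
Your overall strategy (triple-product segment inequality, transport of almost-orthonormality inward from a far point via the FTC) matches the paper's, but there is a genuine gap at the step you yourself flagged as needing care: the claim that the iterated bound on $\mathcal F_{\mathcal F_f(y^*,\cdot)}(x^*,q_i^*)$ ``dominates $\mathcal F_f(y^*,\sigma_i(s))$ for every $s$, in particular $s=0$.'' It does not. By definition, $\mathcal F_{\mathcal F_f(y^*,\cdot)}(x^*,q_i^*)$ is a supremum of \emph{integrals} $\int_0^{d(x^*,q_i^*)}\mathcal F_f(y^*,\sigma(s))\,ds$; a bound on such an integral gives no control whatsoever on the pointwise value of the integrand at $s=0$, i.e.\ on $\mathcal F_f(y^*,x^*)$. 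A mean value argument only produces \emph{some} $s^*$ at which $\mathcal F_f(y^*,\sigma_i(s^*))$ is small, and the resulting geodesic from $y^*$ reaches $\sigma_i(s^*)$, not $x^*$; concatenating with the piece of $\sigma_i$ from $\sigma_i(s^*)$ back to $x^*$ does not yield a minimizing geodesic from $y^*$ to $x^*$, which is what the statement (and its use in Theorem~\ref{t:mainGH}, where $d(x^*,y^*)$ enters the Taylor expansion) requires. So as written, your argument does not establish~\eqref{eq:hessgeo2}.

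The paper's fix is structurally different and worth internalizing: instead of trying to extract $\mathcal F_f(y^*,x^*)$ from the doubly-iterated quantity, one bounds it \emph{directly} as a separate summand in the triple-product mean value argument. Concretely, apply the segment inequality once to $f$ on $B_{2r}(p)$ with $\Omega_1=B_\rho(x)$, $\Omega_2=B_\rho(y)$ (trivially integrating over the third variable in $B_\rho(q_{+1})$), and once to $f$ on $B_{4L}(p)$ with $\Omega_1=B_\rho(y)$, $\Omega_2=B_\rho(q_{+1})$ (trivially integrating over $B_\rho(x)$); adding the two and applying~\eqref{MV} on $B_\rho(x)\times B_\rho(y)\times B_\rho(q_{+1})$ produces a single triple $(x^*,y^*,q^*)$ with $\mathcal F_f(x^*,y^*)+\mathcal F_f(y^*,q^*)\le C(L,\rho)\,m(g)^{1/2}$. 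The first term is exactly~\eqref{eq:hessgeo2}; the second lets one transport almost-orthonormality from $q^*$ to $y^*$ in a single FTC step along the geodesic $\gamma_1$ from $y^*$ to $q^*$ (no detour through $x^*$, so no iterated $\mathcal F$ is needed at all). Your remaining observations are fine but partly superfluous: only one far ball $B_\rho(q_{+1})$ is needed (the sign dichotomy of Proposition~\ref{prop:technical} indeed does not arise here, as you correctly note), and your two-step transport $q_i^*\to x^*\to y^*$ would also work once $\mathcal F_f(x^*,y^*)$ is controlled by the direct method above.
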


\begin{proof}
As in the proof of Proposition \ref{prop:technical}, let $(M,g,p) \in \mathcal{M}$, $0<\rho<r$, and $\varepsilon>0$. Suppose that $x,y$ are points such that $B_{\rho}(x),B_{\rho}(y)\subset B_{r}(p)$.  Let $L>r$ and consider the corresponding point $q_{+1}$; we only need one of these points for this construction. Later, we will see how large $L$ needs to be in terms of $r$, $\rho$, and $\varepsilon$. We claim that we can find  $x^*\in B_\rho(x)$, $y^*\in B_\rho(y)$, and  $q^*\in B_\rho(q_{+1})$ and geodesics $\gamma$ from $y^*$ to $x^*$  and $\gamma_1$ from $y^*$ to $q_{+1}$ such that
\begin{equation}\label{eq:trip}
\int_0^{d(y^*,x^*)}\sum_{j=1}^{3}|\nabla^2 u^j|(\gamma(s))\,ds  + \int_0^{d(y^*,q^*)}\sum_{j=1}^{3}|\nabla^2 u^j|(\gamma_1(s))\,ds   < \varepsilon.
\end{equation}
The first term gives us~\eqref{eq:hessgeo2} while the second term will be used to give us~\eqref{o3qjgoha} via the fundamental theorem of calculus (FTC).

We already saw in the proof of Proposition \ref{prop:technical} how to construct the desired $x^*$, $y^*$, and $\gamma$, \emph{or} the desired $y^*$, $q^*$, and $\gamma_1$ in the claim above. Specifically, see~\eqref{eq:prop2-7}. The only complication is that we want the \emph{same} $y^*$ in both constructions. Note that this is the exact same issue that was mentioned at the end of  the proof of Proposition \ref{prop:technical}. This can be accomplished by integrating over the product of three $\rho$-balls centered at $x$, $y$, and $q_{+1}$. Applying the Cheeger-Colding segment inequality (Theorem~\ref{segmentlemma}) to $f:=\sum_{j=1}^3|\nabla^2 u^j|$  on $B_{2r}(p)$
with $\Omega_1= B_\rho(x)$ and $\Omega_2= B_\rho(y)$ and then trivially integrating over an extra variable in  $B_{\rho}(q_{+1})$ gives
\begin{equation}
\int_{B_{\rho}(x)\times B_{\rho}(y)\times B_{\rho}(q_{+1})}\!\!\!\!
\mathcal{F}_{f}(\xi_1, \xi_2) \,d\xi_1\,d\xi_2\,d\xi_3 \le
C_{\mathrm{s}}(r)|B_{\rho}(q_{+1})| \left(|B_{\rho}(x)|+|B_{\rho}(y)|\right)
\int_{B_{2r}(p)} f\,dV.
\end{equation}
Similarly, for sufficiently large $L$, applying the segment inequality to  $f$  on $B_{4L}(p)$ with $\Omega_1=B_\rho(y)$ and $\Omega_2=B_{\rho}(q_{+1})$
and then trivially integrating over an extra variable in $B_{\rho}(x)$ gives
\begin{equation}
\int_{B_{\rho}(x)\times B_{\rho}(y)\times B_{\rho}(q_{+1})}\!\!\!\!
\mathcal{F}_{f}(\xi_2, \xi_3) \,d\xi_1\,d\xi_2\,d\xi_3  \le
C_{\mathrm{s}}(2L)|B_{\rho}(x)|\left(|B_{\rho}(y)| +|B_{\rho}(q_{+1})|\right)
\int_{B_{4L}(p)} f\,dV.
\end{equation}
Adding these together bounds the integral of $\mathcal{F}_{f}(\xi_1, \xi_2) +\mathcal{F}_{f}(\xi_2, \xi_3)$, so that
using the mean value-like inequality on the triple product $B_{\rho}(x)\times B_{\rho}(y)\times B_{\rho}(q_{+1})$ tells us that there must exist $x^* \in B_{\rho}(x)$, $y^* \in B_{\rho}(y)$, and $q^* \in B_{\rho}(q_{+1})$ such that
\begin{align}
\begin{split}
\mathcal{F}_{f}(x^*, y^*) +\mathcal{F}_{f}(y^*, q^*)& \le  C_{\mathrm{s}}(2L) \left( \frac{1}{|B_\rho(x)|} +  \frac{2}{|B_\rho(y)|} + \frac{1}{|B_\rho(q_{+1})|}\right) \int_{B_{4L}(p)} f\,dV   \label{eq:prop2-311} \\
& \le C_1(L, \rho)\cdot m(g)^{\frac{1}{2}},
\end{split}
\end{align}
where the second inequality follows from the same arguments as in Proposition \ref{prop:technical}, which involved volume comparison to bound the volumes of the $\rho$-balls from below, the H\"{o}lder inequality, and the Hessian estimate~\eqref{eq:L2est} from Proposition~\ref{lem:L2est}. Note that the bound on the first term of~\eqref{eq:prop2-311} already gives us~\eqref{eq:hessgeo2}, but we want to prove that~\eqref{o3qjgoha} holds simultaneously.


This is where we use the bound on the second term of~\eqref{eq:prop2-311}, and we must take $L$ to be large depending on $\varepsilon$ so that we can invoke
Lemma \ref{prop:Bartnik}. Let $\gamma_1$ be a minimizing geodesic from $y^*$ to $q^*$. Then the FTC, Cauchy-Schwarz, and the gradient bound \eqref{globalgradient} from Proposition~\ref{lem:L2est} imply that
\begin{align}
\begin{split}
\sum_{i,j=1}^{3}|\langle\nabla u^i,\nabla u^j \rangle (y^*)-\delta^{ij}|
&\le  \sum_{i,j=1}^{3}|\langle\nabla u^i,\nabla u^j \rangle(q^*)-\delta^{ij}| +
2C_{\mathrm{g}} \int_0^{d(y^*,q^*)}\sum_{j=1}^{3}|\nabla^2 u^j|(\gamma_1(s))\,ds \\
&\le  \sum_{i,j=1}^{3}|\langle\nabla u^i,\nabla u^j \rangle(q^*)-\delta^{ij}| +
2C_{\mathrm{g}} \mathcal{F}_{f}(y^*, q^*),
\end{split}
\end{align}
where we used the definition of $\mathcal{F}_{f}$ in the last step. By Lemma \ref{prop:Bartnik}, we can bound the first term on the right by $\frac{\varepsilon}{2}$ by selecting a specific sufficiently large $L$ (depending on $r$ and $\varepsilon$), and \eqref{eq:prop2-311} says that the second term on the right is bounded by $C_{\mathrm{g}} C_1(L, \rho) \cdot m(g)^{\frac{1}{2}}$, which can be made smaller than $\frac{\varepsilon}{2}$ for sufficiently small $m(g)$, depending on $\varepsilon$, $L$, and $\rho$. Hence we have~\eqref{o3qjgoha}, and we also have~\eqref{eq:hessgeo2} from the bound on the first term of~\eqref{eq:prop2-311}.
\end{proof}

In addition to being able to show that the vectors $\{ \nabla u^i\}_{i=1}^3$ are almost orthonormal at a point $y^*$ near a given point $y$, we can also show that they are almost orthonormal in  an integral sense.

\begin{lemma}\label{orthogonal}
Let $(M,g,p)\in \mathcal{M}$ and fix a radius $r>0$.
Given $\varepsilon>0$, for sufficiently small $m(g)$, we have
\begin{equation}
\int_{B_r(p)}|\langle\nabla u^i ,\nabla u^j \rangle -\delta^{ij}|\,dV< \varepsilon\quad \text{ for all }\quad i,j=1,2,3.
\end{equation}
\end{lemma}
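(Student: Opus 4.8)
The plan is to deduce this from Proposition~\ref{prop:tripleseg} by an averaging/integration argument over $B_r(p)$. Fix $\varepsilon>0$ and $i,j$. The key observation is that Proposition~\ref{prop:tripleseg} produces, for \emph{any} target point $y$ with $B_\rho(y)\subset B_r(p)$, a nearby point $y^*\in B_\rho(y)$ at which $\sum_{i,j}|\langle\nabla u^i,\nabla u^j\rangle(y^*)-\delta^{ij}|$ is small, \emph{and} a geodesic $\gamma$ from $y^*$ to a nearby point $x^*$ along which $\sum_j|\nabla^2 u^j|$ integrates to something small. Using the latter together with the gradient bound~\eqref{globalgradient} and the fundamental theorem of calculus, one can propagate the pointwise almost-orthonormality at $y^*$ to control $|\langle\nabla u^i,\nabla u^j\rangle-\delta^{ij}|$ at all points along $\gamma$, in particular on a small ball around $y$. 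This is the mechanism by which pointwise-at-a-good-point information becomes information on a definite-size region.

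To make this into an $L^1$ bound on all of $B_r(p)$, I would cover $B_r(p)$ (or a slightly enlarged ball $B_{r'}(p)$) by a controlled number of $\rho$-balls, where $\rho$ is chosen small depending on $\varepsilon$, and apply the above at each center. More precisely: given $\varepsilon>0$, first pick $\rho\in(0,\min(r,1))$ small enough that the total volume of a $\rho$-neighborhood of any point is negligible relative to $\varepsilon/|B_r(p)|$-type bounds — using Bishop–Gromov (Theorem~\ref{thm:BG}) to bound ball volumes above and below in terms of $\rho$ and the fixed parameters. Then, for the chosen $\rho$, Proposition~\ref{prop:tripleseg} applied with a suitably small error tolerance gives, for sufficiently small $m(g)$, that on the ball segment reached from each center the integrand $|\langle\nabla u^i,\nabla u^j\rangle-\delta^{ij}|$ is pointwise at most $\varepsilon/(2|B_r(p)|)$ (say). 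Alternatively, and perhaps more cleanly, one applies Proposition~\ref{prop:tripleseg} directly with $y$ ranging over a maximal $\rho$-separated net in $B_r(p)$; the balls $B_\rho(y)$ then cover $B_r(p)$, and the number of net points is bounded by volume comparison, so summing the pointwise bounds over the cover yields $\int_{B_r(p)}|\langle\nabla u^i,\nabla u^j\rangle-\delta^{ij}|\,dV<\varepsilon$.

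The main technical point to get right is the interplay of quantifiers: Proposition~\ref{prop:tripleseg} requires $m(g)$ small \emph{depending on $r$, $\rho$, and the target error}, so one must fix $\rho$ first (as a function of $\varepsilon$ and the ambient parameters), then the number of balls in the cover is determined, then the per-ball error tolerance is $\varepsilon$ divided by a constant depending on $\rho$, and only then does one take $m(g)$ small enough. Since $\rho$ and the cover depend only on $\varepsilon$ and the suppressed parameters $b,\tau,\bar m,\kappa,\mathbf{p}$, this is consistent with the statement, which allows the smallness of $m(g)$ to depend on $r$ and $\varepsilon$. I expect the only mild subtlety to be handling the boundary layer of $B_r(p)$ — points $y\in B_r(p)$ with $B_\rho(y)\not\subset B_r(p)$ — which is dealt with by working inside $B_{r+1}(p)$ instead, whose volume is still controlled by Bishop–Gromov, so that the net points can be taken with $B_\rho(y)\subset B_{r+1}(p)$ while still covering $B_r(p)$. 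Everything else is a routine combination of the fundamental theorem of calculus, Cauchy–Schwarz, the uniform gradient bound, and volume comparison.
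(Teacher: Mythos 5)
There is a genuine gap at the central step. Proposition~\ref{prop:tripleseg} gives you control of $\varphi:=\langle\nabla u^i,\nabla u^j\rangle-\delta^{ij}$ at the \emph{single} point $y^*$, together with a bound on $\int_\gamma\sum_j|\nabla^2 u^j|$ along a \emph{single} minimizing geodesic from $y^*$ to one other point $x^*$. The FTC plus the gradient bound~\eqref{globalgradient} then propagates smallness of $\varphi$ only to points lying \emph{on that one-dimensional curve}; it does not give smallness of $\varphi$ "on a small ball around $y$", and no choice of $\rho$ rescues this. To pass from smallness at the center of a $\rho$-ball to smallness throughout the ball you would need a modulus of continuity for $\varphi$, i.e.\ pointwise (or sup-norm) control of $\nabla\varphi$, which involves $|\nabla^2 u^i|$ pointwise — but the only available Hessian control is the $L^2$ bound~\eqref{eq:L2est}. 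Consequently the "sum the pointwise bounds over the cover" step has nothing to sum: each ball of the net contributes one good point, not a bound on $\int_{B_\rho(y)}|\varphi|$.

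The repair requires an additional integrated application of the segment inequality, and once you do that the covering becomes unnecessary: e.g.\ select (by a double segment-inequality/mean-value argument as in Propositions~\ref{prop:technical} and~\ref{prop:tripleseg}) a single $p^*$ near $p$ at which $|\varphi(p^*)|$ is small \emph{and} $\int_{B_r(p)}\mathcal{F}_f(p^*,\xi)\,d\xi\lesssim m(g)^{1/2}$, then integrate the FTC inequality $|\varphi(\xi)|\le|\varphi(p^*)|+2C_{\mathrm{g}}\mathcal{F}_f(p^*,\xi)$ over $\xi\in B_r(p)$. The paper takes a related but different route: it applies the $L^2$ Poincar\'e inequality of Besson--Courtois--Hersonsky (valid under the Ricci lower bound) to $\varphi$, bounding $\int_{B_r(p)}|\varphi-\bar\varphi|^2$ by $\int_{B_{3r}(p)}|\nabla\varphi|^2\lesssim m(g)$ via~\eqref{eq:L2est}, and then uses the segment-inequality selection of one good point only to show that the \emph{average} $\bar\varphi$ is small, finishing with H\"older and volume comparison. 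Either way, the missing ingredient in your write-up is an integral (Poincar\'e- or segment-type) estimate that controls the deviation of $\varphi$ from its value at the good point over a set of full measure, not just along one geodesic.
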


\begin{proof}
According to \cite[Theorem 1.14]{Bessonetal}, under a Ricci curvature lower bound, there is a uniform constant $C_1$ depending on $\kappa$ and $r$ such that
\begin{equation}\label{eq:orthoint}
\int_{B_r(p)}|\varphi -\bar{\varphi}|^{2}\,dV\leq C_1(r)\int_{B_{3r}(p)}|\nabla \varphi|^2 \,dV
\end{equation}
for all $\varphi\in C^{\infty}(B_{3r}(p))$, where $\bar{\varphi}$ is the average value of $\varphi$ in $B_r(p)$. Here we follow our convention of suppressing dependence on $\kappa$. Applying this inequality with $\varphi=\langle \nabla u^i,\nabla u^j\rangle-\delta^{ij}$ produces
\begin{align}\label{798fn1}
\begin{split}
\int_{B_r(p)}|\langle \nabla u^i, \nabla u^j\rangle - \delta^{ij} -\bar{\varphi}|^{2}\,dV&\leq  C_1(r)\int_{B_{3r}(p)}|\nabla \langle \nabla u^i, \nabla u^j\rangle|^2 \,dV\\
&\leq  C_1(r) C_{\mathrm{g}}^2 \int_{B_{3r}(p)}\left(|\nabla^2 u^i|^2 +|\nabla^2 u^j|^2\right)\,dV\\
&\leq  32\pi C_1(r) C_{\mathrm{g}}^3 m(g),
\end{split}
\end{align}
by Proposition~\ref{lem:L2est}. Next, observe that for any $\rho<r$, the mean value-like inequality~\eqref{MV} implies the existence of $p^*\in B_{\rho}(p)$ such that
\begin{equation}
|\varphi(p^*) -\bar{\varphi}|^{2}\leq    \frac{C_2(r)}{ |B_{\rho}(p)| } m(g).
\end{equation}
However, we will need a refined version of this inequality in which it is valid simultaneously with a bound giving smallness of $|\varphi(p^*)|$, as in Proposition~\ref{prop:tripleseg}.

To achieve this, recall that in both the proofs of Proposition~\ref{prop:technical} and Proposition~\ref{prop:tripleseg}, we saw that
\begin{equation}
\int_{B_\rho(p)\times B_\rho(q_{+1})} \mathcal{F}_{f}(\xi_1,\xi_2 )\,d\xi_1\,d\xi_2  < C_3(r, L)\cdot m(g)^{\frac{1}{2}},
\end{equation}
where $f=\sum_{i=1}^3|\nabla^2 u^i|$ again, and the $L$ parameter determines the location of $q_{+1}$. Recall that this was proved by combining
the Cheeger-Colding segment inequality with the mass inequality. Combining this with~\eqref{798fn1} and using $m(g)\le \bar{m}$ yields
\begin{equation}
\int_{B_{\rho}(p)}\left(|\varphi -\bar{\varphi}|^{2}(\xi_1) +\int_{B_{\rho}(q_{+1})}\mathcal{F}_f (\xi_1,\xi_2 )\,d\xi_2 \right)\,d\xi_1 < C_4(r, L)\cdot m(g)^{\frac{1}{2}}.
\end{equation}
Therefore, by the  mean value-like inequality~\eqref{MV}, there is a $p^*\in B_{\rho}(p)$ such that
\begin{equation} \label{eq:smallave}
|\varphi(p^*) -\bar{\varphi}|^{2}  + \int_{B_{\rho}(q_{+k})}\mathcal{F}_{f}(p^*,\xi_2) \,d\xi_2 < C_5(r,L,\rho)\cdot m(g)^{\frac{1}{2}},
\end{equation}
where the $|B_{\rho}(p)|$ term has been absorbed into the $C_5$ with the aid of the volume lower bound argument from Proposition~\ref{prop:technical}. Finally, note that the smallness of the integral on the left-hand side is what was used in the proof of Proposition~\ref{prop:tripleseg} to establish \eqref{o3qjgoha}, so for any $\varepsilon_0$, there exists sufficiently large $L$ and then sufficiently small $m(g)$ such that $|\varphi(p^*)| < \varepsilon_0$. Hence by~\eqref{eq:smallave}, we can also force
$|\bar\varphi|< 2\varepsilon_0$. Inserting this back into \eqref{798fn1},
and using H\"{o}lder's inequality, along with the volume comparison $|B_{r}(p)|\leq|B_{r}^{-\kappa}|$, gives the desired result.
\end{proof}

\section{The Almost Pythagorean Identity}
\label{sec5} \setcounter{equation}{0}
\setcounter{section}{5}

The estimates along geodesics collected in the last section will now be used to establish a quantitative version of the Pythagorean theorem, in analogy to the original almost Pythagorean identity in \cite{C}. This result, Lemma \ref{lem:pyth} below, plays a leading role in the process of almost splitting along harmonic level sets. The fact that we can carry out this almost splitting in multiple directions, leads to Gromov-Hausdorff closeness to Euclidean space when the mass is small.




\begin{lemma}[Almost Pythagorean identity] \label{lem:pyth}


Let $(M,g,p) \in \mathcal{M}$, and let $r, \varepsilon>0$. Then for sufficiently small $m(g)$, depending on $r$ and $\varepsilon$, for any $x,y\in B_r(p)$ and any $i=1,2,3$, there exists
$z\in  \Sigma^i(y)$ such that
\begin{equation}\label{qiavjq0jud}
\left |d(x,z)^2+d(y,z)^2-d(x,y)^2 \right|< \varepsilon,
\end{equation}
and also $d(p,z)\leq\mathcal{C}(r)$ for some constant $\mathcal{C}(r)$ depending only on $r$.
\end{lemma}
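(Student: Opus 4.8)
The plan is to apply Proposition~\ref{prop:technical} with a suitably small auxiliary radius $\rho$, and then extract the Pythagorean identity by a first-order Taylor/Cheeger-Colding-type argument applied to the function $u^i$ along the geodesics produced there. Given $x,y\in B_r(p)$ and $\varepsilon>0$, first I would fix $\rho\in(0,\min(r,1))$ small (to be chosen depending on $r$ and $\varepsilon$) and apply Proposition~\ref{prop:technical} to obtain points $x^*\in B_\rho(x)$, $y^*\in B_\rho(y)$, $z\in\Sigma^i(y)$, a minimizing geodesic $\sigma$ from $x^*$ to $z$, and minimizing geodesics $\gamma_s$ from $y^*$ to $\sigma(s)$, such that the double-integral Hessian bound \eqref{eq:prop1.3} and the gradient-alignment bound \eqref{eq:prop1.1} both hold with error $\varepsilon$. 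Lemma~\ref{lem:zdist} simultaneously gives $d(p,z)\leq\mathcal{C}_0(r)$, so setting $\mathcal{C}(r):=\mathcal{C}_0(r)$ handles the last assertion immediately; it remains only to prove \eqref{qiavjq0jud}, and since $d(x,z)$, $d(y,z)$, $d(x,y)$ differ from $d(x^*,z)$, $d(y^*,z)$, $d(x^*,y^*)$ by at most $2\rho$ (and all these distances are bounded by $2\mathcal{C}_0(r)+2r$ via the triangle inequality), it suffices to prove the identity for the starred points with error $\varepsilon/2$, provided $\rho$ is chosen small enough at the end.

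The heart of the matter is to show $d(x^*,z)^2+d(y^*,z)^2\approx d(x^*,y^*)^2$. I would follow the standard Cheeger-Colding strategy of tracking $u^i$ along the two-parameter family of geodesics $\gamma_s$. Define $h(s):=u^i(\sigma(s))$ on $[0,d(x^*,z)]$; since $|\nabla u^i|\approx1$ and $\pm\nabla u^i\approx\sigma'$ along $\sigma$ by \eqref{eq:prop1.1} (in an $L^1$-in-$s$ sense), $h$ is, up to small error, an isometry onto its image, so $|u^i(x^*)-u^i(z)|\approx d(x^*,z)$; and since $z\in\Sigma^i(y)$, $u^i(z)=u^i(y)$, which I would compare to $u^i(y^*)$ using that $|u^i(y)-u^i(y^*)|\leq C_{\mathrm g}\,d(y,y^*)\leq C_{\mathrm g}\rho$. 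Thus $d(x^*,z)\approx|u^i(x^*)-u^i(y^*)|$ up to an error controlled by $\varepsilon$ and $\rho$. For the other two legs, I would use the geodesics $\gamma_s$: for each fixed $s$, differentiating $t\mapsto u^j(\gamma_s(t))$ and integrating the Hessian via \eqref{eq:prop1.3} shows that along $\gamma_s$ the quantities $\langle\nabla u^j,\gamma_s'\rangle$ are nearly constant, so $\nabla u^j$ restricted to the (thin) region swept out by the $\gamma_s$'s is nearly parallel, for each $j$, and by Proposition~\ref{prop:tripleseg} (or Lemma~\ref{hhhh} transported inward) these three vectors are nearly orthonormal there. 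Consequently the map $x\mapsto(u^1(x),u^2(x),u^3(x))$ is nearly a Euclidean isometry on this region, whence $d(x^*,y^*)^2\approx\sum_j|u^j(x^*)-u^j(y^*)|^2$ and $d(y^*,z)^2\approx\sum_j|u^j(y^*)-u^j(z)|^2=\sum_{j\neq i}|u^j(y^*)-u^j(z)|^2$ (the $i$-term vanishing since $u^i(z)=u^i(y)\approx u^i(y^*)$). Combining the three approximations, $d(x^*,z)^2+d(y^*,z)^2-d(x^*,y^*)^2\approx |u^i(x^*)-u^i(y^*)|^2+\sum_{j\neq i}|u^j(y^*)-u^j(z)|^2-\sum_j|u^j(x^*)-u^j(y^*)|^2$, and to close this I would additionally need $u^j(z)\approx u^j(x^*)$ for $j\neq i$ — i.e. that the geodesic $\sigma$ from $x^*$ to $z$ moves essentially only in the $u^i$-direction — which is exactly what the $j\neq i$ part of \eqref{eq:prop1.1} supplies after integration.

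\textbf{Main obstacle.} The delicate point is keeping the accumulated errors uniform: each use of ``$|\nabla u^j|$ nearly constant along a geodesic of length $\leq 2\mathcal{C}_0(r)+2r$'' multiplies an $L^1$-Hessian bound by a length, and one must verify that the lengths $d(x^*,z)$, $d(y^*,\sigma(s))$, $d(x^*,y^*)$ are all bounded purely in terms of $r$ before choosing how small $m(g)$ must be — this is precisely why Lemma~\ref{lem:zdist} is invoked first, and why the error in Proposition~\ref{prop:technical} is quantified by $\varepsilon$ rather than by $m(g)^{1/2}$ directly. The other subtlety is that the approximation $d(x^*,z)\approx|u^i(x^*)-u^i(y^*)|$ and the orthonormality/Euclidean-isometry claims all hold only in an averaged sense along the specific geodesics produced, not pointwise on $B_r(p)$; so the bookkeeping must be arranged so that every distance appearing is realized along one of the geodesics $\sigma$ or $\gamma_s$ to which \eqref{eq:prop1.3}–\eqref{eq:prop1.1} apply. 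Once the length bounds are in hand, the final step is routine: choose $L$ large (hence $\mathcal{C}_0(r)$ fixed), then $\rho$ small, then $m(g)$ small, so that all errors sum to less than $\varepsilon$.
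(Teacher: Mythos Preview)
Your setup and the reduction to starred points are fine, but the core of your argument has a genuine gap. You claim that $\mathbf{u}=(u^1,u^2,u^3)$ is ``nearly a Euclidean isometry'' on the region swept out by the $\gamma_s$, and then deduce $d(x^*,y^*)^2\approx\sum_j|u^j(x^*)-u^j(y^*)|^2$ and $d(y^*,z)^2\approx\sum_j|u^j(y^*)-u^j(z)|^2$. But this near-isometry property is exactly what the paper establishes \emph{later}, in Theorem~\ref{t:mainGH}, by applying Lemma~\ref{lem:pyth} three times (once per coordinate direction). You have inverted the logical dependence.

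More concretely, the claim $d(y^*,z)^2\approx\sum_j|u^j(y^*)-u^j(z)|^2$ is not available from the tools at hand. The geodesic from $y^*$ to $z$ is $\gamma_T$ (the $s=T$ member of the family), and the double-integral bound~\eqref{eq:prop1.3} controls $\int_0^{l(s)}|\nabla^2 u^j|(\gamma_s(t))\,dt$ only \emph{on average in $s$}, not for the particular value $s=T$. So you cannot conclude that $\langle\nabla u^j,\gamma_T'\rangle$ is nearly constant along $\gamma_T$, and hence cannot propagate orthonormality from $y^*$ out to $z$. Invoking Proposition~\ref{prop:tripleseg} separately for the pair $(y,z)$ would produce \emph{new} perturbed endpoints, incompatible with the $z$ already fixed by Proposition~\ref{prop:technical}.

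The paper sidesteps this entirely by never asserting the isometry claim. It uses the first variation of arclength: with $l(s):=d(y^*,\sigma(s))$ one has $l'(s)=\langle\sigma'(s),\gamma_s'(l(s))\rangle$, so
\[
\tfrac12\bigl(d(y^*,z)^2-d(x^*,y^*)^2\bigr)=\int_0^T l(s)\,\langle\sigma'(s),\gamma_s'(l(s))\rangle\,ds.
\]
One then replaces $\sigma'$ by $\nabla u^i$ via~\eqref{eq:prop1.1}, applies FTC along each $\gamma_s$ (controlled by~\eqref{eq:prop1.3}) to convert the inner product into $u^i(\sigma(s))-u^i(y^*)$, and uses~\eqref{eq:prop1.1} once more along $\sigma$ to identify $u^i(\sigma(s))-u^i(z)$ with $s-T$. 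This collapses to $-\tfrac12 T^2=-\tfrac12 d(x^*,z)^2$ plus controlled error. Only $u^i$ is used; the $j\neq i$ part of~\eqref{eq:prop1.1} plays no role here (the paper says so explicitly). The key structural point is that the averaged Hessian bound~\eqref{eq:prop1.3} enters through an $s$-integral, which is precisely the form the first-variation computation produces---no single-geodesic control along $\gamma_T$ is ever needed.
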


\begin{proof}
Let $(M, g, p)\in\mathcal{M}$, let $r,\varepsilon>0$, fix $i$, and pick $x,y\in B_r(p)$. Choose $\rho\in (0,\min(r, 1))$  and we will see later on how much smaller $\rho$  needs to be.  In particular, $B_\rho(x), B_\rho(y)\subset B_{2r}(p)$. Let $\varepsilon_0>0$. We will see later how small it needs to be, but for now, apply Proposition \ref{prop:technical} to the points $x,y$ in the ball $B_{2r}(p)$, using this $\varepsilon_0$.  So
for sufficiently small $m(g)$ depending on $r$, $\varepsilon_0$, and $\rho$,
we can find points $x^*\in B_\rho(x)$, $y^*\in B_\rho(y)$, $z\in \Sigma^i(y)$, and a minimizing geodesic $\sigma$ from $x^*$ to $z$ such that for any $s\in[0,d(x^*,z)]$ and any minimizing geodesic $\gamma_s$ from $y^*$ to $\sigma(s)$, we have
\begin{equation}\label{eq:pyth1}
\int_0^{d(x^*,z)}\int_0^{d(y^*,\sigma(s))} |\nabla^2 u^i|(\gamma_s(t))\,dt\,ds< \varepsilon_0,
\end{equation}
and
\begin{equation}\label{eq:pyth2}
\int_0^{d(x^*,z)} |\pm\nabla u^i-\sigma'| (\sigma(s))\,ds< \varepsilon_0,
\end{equation}
for some choice of $\pm$. We will not need the terms involving $j\ne i$ here. Let us assume that this estimate holds with the ``$+$'' sign, since the ``$-$'' case is similar. Moreover, $d(p ,z)< \mathcal{C}(r)$ where $\mathcal{C}(r):= \mathcal{C}_0(2r)$, where $\mathcal{C}_0(2r)$ is given by Lemma~\ref{lem:zdist}.

Define $T:=d(x^*, z)$, so that $\sigma(T)=z$, and define $l(s):= d(y^*, \sigma(s))$, so that $\gamma_s(l(s))=\sigma(s)$. Note that
\begin{equation}\label{Tbound}
T \le  d(x^*, p) + d(p,z)< 2r+ \mathcal{C}(r),
\end{equation}
and
\begin{equation}\label{lsbound}
l(s) \le d(y^*, x^*) + d(x^*, \sigma(s)) < 4r + d(x^*, z) < 6r+ \mathcal{C}(r).
\end{equation}
It is well-known that $l'(s)$ exists for almost every $s$. By the first variation of arclength, $l'(s) = \left \langle\sigma'(s), \gamma_s'(l(s))\right \rangle$.  By the fundamental theorem of calculus (FTC),
\begin{align}\label{eq6.1}
\begin{split}
\frac{1}{2}\left(d(y^*,z)^2-d(x^*,y^*)^2\right) =&\frac{1}{2}\left(l(T)^2-l(0)^2\right)\\
=& \int_0^T l(s)l'(s) \,ds\\
=&\int_0^T l(s) \left \langle\sigma'(s), \gamma_s'(l(s))\right \rangle \,ds\\
\le& \int_0^T l(s) \left \langle \nabla u^i(\sigma(s)), \gamma_s'(l(s)) \right \rangle \,ds
+\int_0^T l(s) |\nabla u^i-\sigma'| (\sigma(s))\,ds \\
<& \int_0^T \int_{0}^{l(s)} \left \langle \nabla u^i(\gamma_s(l(s))), \gamma_s'(l(s)) \right \rangle\, dt\, ds + C_1(r)\varepsilon_0,
\end{split}
\end{align}
where we used \eqref{eq:pyth2} and \eqref{lsbound} in the last line. Next observe that for any $t\in[0,l(s)]$,  the FTC says that
\begin{equation}
\left \langle \nabla u^i(\gamma_s(l(s))), \gamma_s'(l(s)) \right \rangle
=\left \langle \nabla u^i(\gamma_s(t)), \gamma_s'(t) \right \rangle
+\int_t^{l(s)}\nabla^2 u^i(\gamma_{s}'(\tau),\gamma_{s}'(\tau))\,d\tau.
\end{equation}
Integrating this, we obtain
\begin{align}
\begin{split}
\int_0^T \int_{0}^{l(s)} &
\left \langle \nabla u^i(\gamma_s(l(s))), \gamma_s'(l(s)) \right \rangle\,dt\,ds\\
&\le \int_0^T \int_{0}^{l(s)} \left \langle \nabla u^i(\gamma_s(t)), \gamma_s'(t) \right \rangle\,dt\,ds
+\int_0^T \int_{0}^{l(s)} \int_t^{l(s)}|\nabla^2 u^i| (\gamma_s(\tau)) \,d\tau\,dt\,ds \\
&< \int_0^T  \left(u^i(\gamma_s(l(s)))  - u^i(\gamma_s(0))\right)  ds  + C_2(r) \varepsilon_0\\
&=\int_0^T  \left(u^i(\sigma(s)))  - u^i(y^*)\right)  ds  + C_2(r) \varepsilon_0, \label{jhui}
\end{split}
\end{align}
where the second inequality follows from the FTC, \eqref{eq:pyth1}, and \eqref{lsbound}.
Since $z\in \Sigma^i(y)$, we may use the FTC again to see that
\begin{align}\label{eq:utodist}
\begin{split}
u^i(\sigma(s))-u^i(y)
&= u^i(\sigma(s))-u^i(z)\\
&= u^i(\sigma(s))-u^i(\sigma(T))\\
&=-\int_s^T \langle \nabla u^i (\sigma(t)), \sigma'(t)\rangle \,dt \\
&=\int_s^T\left[-1+ \langle\sigma'(t) -\nabla u^i(\sigma(t)), \sigma'(t)\rangle\right]\,dt\\
&\le s-T +  \int_s^T | \sigma' -\nabla^i u|(\sigma(t))\,dt \\
&< s-T +  \varepsilon_0,
\end{split}
\end{align}
where we used~\eqref{eq:pyth2} in the last line. We can connect this back to the integrand of \eqref{jhui} by using the gradient bound~\eqref{globalgradient} to see that $|u^i(y^*)-u^i(y)|\le C_{\mathrm{g}} \rho$. Combing this with~\eqref{eq6.1},~\eqref{jhui}, and~\eqref{eq:utodist}
yields
\begin{align}\begin{split}
\frac{1}{2}\left(d(y^*,z)^2-d(x^*,y^*)^2\right)&<\int_0^T [(s-T) + \varepsilon_0 + C_{\mathrm{g}} \rho]\,ds + (C_1(r)+C_2(r))\varepsilon_0 \\
&= -\frac{1}{2}T^2 + T(\varepsilon_0 + C_{\mathrm{g}} \rho) + (C_1(r)+C_2(r))\varepsilon_0 \\
&\le -\frac{1}{2}d(x^*,z)^2 + C_3(r) (\varepsilon_0+ \rho).
\end{split}
\end{align}

It is clear that analogous arguments may be used to obtain a lower bound of the same form, and hence
\begin{equation}\label{28vnqod}
\left|d(x^*,z)^2+d(y^*,z)^2-d(x^*,y^*)^2\right|
<C_3(r)(\varepsilon_0 +\rho).
\end{equation}
Finally, to replace $x^*$ and $y^*$ by $x$ and $y$, note that $d(x, x^*)$ and $d(y, y^*)$ are less than $\rho$ and that the distances $d(x^*,z)$, $d(y^*,z)$, and $d(x^*,y^*)$ are all bounded in terms of $r$. From this, simple use of the triangle inequality implies that
\begin{equation}
\left|d(x, z)^2+d(y ,z)^2-d(x,y)^2\right|
<C_4(r)(\varepsilon_0 +\rho),
\end{equation}
for some new constant $C_4(r)$. Lastly, we can see that if we choose both $\varepsilon_0$ and $\rho$ to be smaller than $\frac{\varepsilon}{ 2C_4(r)}$,  then the desired estimate~\eqref{qiavjq0jud} holds for sufficiently small $m(g)$, as determined by Proposition~\ref{prop:technical} with these choices of $r$, $\varepsilon_0$, and $\rho$.
\end{proof}

The following lemma could have been stated as part of the previous lemma, but we choose to break up the exposition. It helps us to use the harmonic functions $u^i$ to approximate distances.

\begin{lemma}\label{lem:distandui}
In addition to the conclusions stated in Lemma \ref{lem:pyth}, we have the following estimates:
\begin{equation}\label{eq:distandui1}
\left|d(x,z)-|u^i(x)-u^i(z)|\right|<\varepsilon,
\end{equation}
and for any second index $j\neq i$ we have
\begin{equation}\label{eq:distandui2}
|u^j(x)-u^j(z)|<\varepsilon.
\end{equation}
\end{lemma}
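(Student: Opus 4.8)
The strategy is to re-run the geodesic machinery from the proof of Lemma~\ref{lem:pyth}, keeping track of the extra information encoded in the estimate~\eqref{eq:pyth2} along the geodesic $\sigma$ from $x^*$ to $z$, plus the almost-orthonormality of the frame $\{\nabla u^j\}$ that we have available at a perturbed base point. Concretely, I would first use the same $x^*\in B_\rho(x)$, $y^*\in B_\rho(y)$, $z\in\Sigma^i(y)$, and minimizing geodesic $\sigma$ from $x^*$ to $z$ produced by Proposition~\ref{prop:technical}, so that~\eqref{eq:pyth2} holds with the ``$+$'' sign: $\int_0^T|\nabla u^i-\sigma'|(\sigma(s))\,ds<\varepsilon_0$, where $T=d(x^*,z)$ is bounded in terms of $r$ by~\eqref{Tbound}. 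The key computation for~\eqref{eq:distandui1} is that, by the FTC along $\sigma$,
\begin{equation}
u^i(z)-u^i(x^*)=\int_0^T\langle\nabla u^i(\sigma(s)),\sigma'(s)\rangle\,ds
=\int_0^T\left(1-\langle\sigma'-\nabla u^i,\sigma'\rangle\right)(\sigma(s))\,ds,
\end{equation}
so $|u^i(z)-u^i(x^*)-T|\le\int_0^T|\nabla u^i-\sigma'|(\sigma(s))\,ds<\varepsilon_0$; since $\sigma$ is minimizing, $T=d(x^*,z)$, and since $z$ is ``ahead'' of $x^*$ in the $u^i$ direction we get $u^i(z)\ge u^i(x^*)$, hence $\bigl|d(x^*,z)-|u^i(x^*)-u^i(z)|\bigr|<\varepsilon_0$. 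Finally one passes from $x^*$ to $x$ using $d(x,x^*)<\rho$ together with the gradient bound~\eqref{globalgradient}, which gives $|u^i(x)-u^i(x^*)|\le C_{\mathrm{g}}\rho$, so choosing $\varepsilon_0$ and $\rho$ small relative to $\varepsilon/C_{\mathrm{g}}$ yields~\eqref{eq:distandui1}.

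For~\eqref{eq:distandui2}, the point is that $z\in\Sigma^i(y)$ means $u^i(z)=u^i(y)$, and $x,y\in B_r(p)$ are both close to $p$, so $|u^j(x)-u^j(z)|\le|u^j(x)-u^j(y)|+|u^j(y)-u^j(z)|$; the first term is at most $C_{\mathrm{g}}\cdot 2r$ by the gradient bound, which is \emph{not} small, so this naive splitting does not work directly. Instead I would exploit the full strength of Proposition~\ref{prop:technical}, namely the neglected terms $\sum_{j\ne i}\int_0^T|\langle\nabla u^j,\sigma'\rangle|(\sigma(s))\,ds<\varepsilon_0$ in~\eqref{eq:prop1.1}. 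Applying the FTC to $u^j$ along $\sigma$,
\begin{equation}
|u^j(z)-u^j(x^*)|=\left|\int_0^T\langle\nabla u^j(\sigma(s)),\sigma'(s)\rangle\,ds\right|\le\int_0^T|\langle\nabla u^j,\sigma'\rangle|(\sigma(s))\,ds<\varepsilon_0,
\end{equation}
and then $|u^j(z)-u^j(x)|\le|u^j(z)-u^j(x^*)|+|u^j(x^*)-u^j(x)|<\varepsilon_0+C_{\mathrm{g}}\rho$, which is small for the appropriate choices of $\varepsilon_0,\rho$. This gives~\eqref{eq:distandui2}.

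\textbf{Main obstacle.} The delicate point is not the estimates themselves but making sure that the \emph{same} objects $x^*,y^*,z,\sigma$ from Lemma~\ref{lem:pyth} can be used, so that we are genuinely adding conclusions to that lemma rather than re-selecting points; since Proposition~\ref{prop:technical} already provides~\eqref{eq:prop1.1} with the full sum over $j$ (including the $j\ne i$ terms that were discarded in the proof of Lemma~\ref{lem:pyth}), the construction there does supply everything needed, so the only care required is bookkeeping: retain the $j\ne i$ terms, track which sign of $\pm$ occurred, and verify that $d(x^*,z)$, $d(y^*,z)$, and $d(x^*,y^*)$ are all bounded in terms of $r$ (which follows from~\eqref{Tbound}, \eqref{lsbound}, and the bound on $d(p,z)$) so that all the ``$\varepsilon_0$ times a distance'' error terms can be absorbed. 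Thus the proof is essentially a corollary of the argument already carried out, and one would phrase it by saying ``following the notation and construction of the proof of Lemma~\ref{lem:pyth}'' and then reading off~\eqref{eq:distandui1} and~\eqref{eq:distandui2} from~\eqref{eq:pyth2} and the $j\ne i$ part of~\eqref{eq:prop1.1}.
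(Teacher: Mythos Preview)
Your proposal is correct and follows essentially the same approach as the paper: continue from the construction in Lemma~\ref{lem:pyth}, apply the FTC along $\sigma$ using both parts of~\eqref{eq:prop1.1} (the $|\nabla u^i-\sigma'|$ term for~\eqref{eq:distandui1} and the $|\langle\nabla u^j,\sigma'\rangle|$ terms for~\eqref{eq:distandui2}), and then pass from $x^*$ to $x$ via the triangle inequality and the gradient bound~\eqref{globalgradient}. The only remark is that the almost-orthonormality of $\{\nabla u^j\}$ mentioned in your plan is never actually needed here; everything comes directly from~\eqref{eq:prop1.1}.
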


\begin{proof}
Continuing on from the proof of Lemma~\ref{lem:pyth}, with the same notation, we again assume we are in the case where estimate~\eqref{eq:pyth2} holds with the ``$+$'' sign, since the ``$-$'' case is similar. Going back to Proposition~\ref{prop:technical}, this corresponds to when $u^i(x^*) \le u^i(z)$. Then by the FTC,
\begin{align}\label{eq:udiff1}
\begin{split}
\left||u^i(x^*)-u^i(z)|-d(x^*,z)\right|&=\left| (u^i(z)-u^i(x^*))-d(x^*, z)\right|\\
&=\left|\int_0^{d(x^*, z)}\left[  \langle \nabla u^i, \sigma'\rangle-  \langle \sigma', \sigma'\rangle \right](\sigma(s))\,ds\right|\\
&\leq\int_0^{d(x^*, z)}\left|\nabla u^i -\sigma'\right|(\sigma(s))ds\\
&<\varepsilon_0,
\end{split}
\end{align}
where we used~\eqref{eq:pyth2} in the last line. Meanwhile, if $j\neq i$, the FTC gives us\
\begin{equation}\label{eq:udiff2}
\left|u^j(x^*)-u^j(z)\right|
=\left| \int_0^{d(x^*, z)}\langle\nabla u^j,\sigma'\rangle(\sigma(s))\,ds\right|<\varepsilon_0,
\end{equation}
where the inequality follows from the bound on the second term in \eqref{eq:prop1.1} from Proposition~\ref{prop:technical}.

Using the gradient bound~\eqref{globalgradient} to estimate $|u^j(x)-u^j(x^*)|\le C_{\mathrm{g}} \rho$ for $j=1,2,3$, and combining this with \eqref{eq:udiff1} and \eqref{eq:udiff2}, we obtain
\begin{align} \label{eq:udiff4}
\begin{split}
\left||u^i(x)-u^i(z)|-d(x,z)\right| &\le \varepsilon_0 + (C_{\mathrm{g}}+1) \rho \\
\left| u^j(x)-u^j (z)\right| &\le \varepsilon_0 + C_{\mathrm{g}} \rho,
\end{split}
\end{align}
for $j\ne i$. From this it is clear that if we select $\rho$ and $\varepsilon_0$ sufficiently small, we obtain the desired estimates (as well as the ones from Lemma~\ref{lem:pyth}), for sufficiently small $m(g)$ depending on these choices of $r, \varepsilon_0$, and $\rho$.
\end{proof}

\section{Proof of the Main Theorem}
\label{sec6} \setcounter{equation}{0}
\setcounter{section}{6}

We will prove Theorem~\ref{t:main} by showing that for any radius $\varrho>0$ the Gromov-Hausdorff distance between $B_\varrho(p)\subset M$ and the Euclidean ball $B^\mathbb{E}_\varrho(0)$ can be made arbitrarily small by choosing the mass to be sufficiently small. To this end, consider the map $\mathbf{u}:B_r(p)\to\mathbb{R}^3$ defined by $\mathbf{u}(x)=(u^1(x),u^2(x),u^3(x))$. This is the only place where we use the convention in Definition~\ref{def:class} that
 $\mathbf{u}(p)=0$. The first step of the proof of the main result
is to demonstrate that $\mathbf{u}$ is an $\varepsilon$-isometry onto its image for sufficiently small mass. The second step is to ensure that the Hausdorff distance between the image $\mathbf{u}(B_r(p))$
and $B^\mathbb{E}_r(0)$ is less than $\varepsilon$ for sufficiently small mass. Thus, $\mathbf{u}$ will give rise to the desired Gromov-Hausdorff approximation.

\begin{theorem}\label{t:mainGH}
Let $(M,g,p)\in \mathcal{M}(b,\tau,\bar{m},\kappa,\mathbf{p})$ and fix a radius $r>0$. Given $\varepsilon>0$, there is a $\delta>0$ such that if
 $m(g)<\delta$, then for all $x, y\in B_r(p)$ we have
\begin{equation}
\left| d(x,y) - |\mathbf{u}(x)-\mathbf{u}(y)| \right|  <\varepsilon.
\end{equation}
\end{theorem}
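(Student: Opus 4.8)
The plan is to deduce both inequalities $d(x,y) - |\mathbf{u}(x) - \mathbf{u}(y)| < \varepsilon$ and $|\mathbf{u}(x) - \mathbf{u}(y)| - d(x,y) < \varepsilon$ from the machinery assembled in Sections 4 and 5, applied coordinate-by-coordinate. For the \emph{lower bound on $d(x,y)$} (equivalently, $|\mathbf{u}(x)-\mathbf{u}(y)| \le d(x,y) + \text{small}$), I would use the global gradient bound $\sup_M |\nabla u^i| \le C_{\mathrm g}$ from Proposition~\ref{lem:L2est} together with the almost-orthonormality of $\{\nabla u^i\}$ in the integral sense from Lemma~\ref{orthogonal}. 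Indeed, along any minimizing geodesic $\gamma$ from $x$ to $y$, the fundamental theorem of calculus gives $u^i(x) - u^i(y) = -\int_0^{d(x,y)} \langle \nabla u^i, \gamma'\rangle\, ds$, so $|\mathbf{u}(x)-\mathbf{u}(y)|^2 = \sum_i \left(\int_0^{d(x,y)} \langle \nabla u^i, \gamma'\rangle\, ds\right)^2$. Expanding and using that $\sum_i \langle \nabla u^i, v\rangle^2$ is close to $|v|^2$ on average (via Lemma~\ref{orthogonal}, after using the segment inequality once more to pass from an $L^1$-in-$B_r(p)$ estimate on $\sum_{i,j}|\langle \nabla u^i,\nabla u^j\rangle - \delta^{ij}|$ to a bound along the specific geodesic $\gamma$, at the cost of perturbing endpoints by $\rho$) should yield $|\mathbf{u}(x)-\mathbf{u}(y)| \le d(x,y) + C(r)(\varepsilon_0 + \rho)$ for suitable auxiliary small parameters.

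For the \emph{upper bound on $d(x,y)$} (equivalently $d(x,y) \le |\mathbf{u}(x) - \mathbf{u}(y)| + \text{small}$), I would invoke Lemma~\ref{lem:pyth} and Lemma~\ref{lem:distandui} iteratively in the three coordinate directions. Start with $x, y \in B_r(p)$ and apply Lemma~\ref{lem:pyth} with $i=1$ to obtain $z_1 \in \Sigma^1(y)$ with $d(x,z_1)^2 + d(y,z_1)^2 - d(x,y)^2$ small and $d(p,z_1) \le \mathcal{C}(r)$; then Lemma~\ref{lem:distandui} gives $d(x,z_1) \approx |u^1(x) - u^1(z_1)| = |u^1(x) - u^1(y)|$ (since $z_1 \in \Sigma^1(y)$) and $|u^j(x) - u^j(z_1)|$ small for $j \ne 1$. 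So $d(y, z_1)^2 \approx d(x,y)^2 - |u^1(x)-u^1(y)|^2$, and moreover $u^2, u^3$ barely change between $x$ and $z_1$, while $u^1(z_1) = u^1(y)$. Repeat: apply Lemma~\ref{lem:pyth} to the pair $z_1, y$ with $i=2$ to peel off the $u^2$-contribution, then once more with $i=3$. After three steps one arrives at a point $w$ with $d(y,w)$ small (of order the accumulated errors) and $d(x,y)^2 \approx \sum_i |u^i(x) - u^i(y)|^2 = |\mathbf{u}(x)-\mathbf{u}(y)|^2$, up to a controlled error that can be driven below $\varepsilon$ by choosing the auxiliary parameters small and then $m(g)$ small. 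I would need to be careful that all intermediate points stay in a ball of radius $\mathcal{C}'(r)$ so that the constants in Lemmas~\ref{lem:pyth} and~\ref{lem:distandui} remain uniform; this is guaranteed by the uniform bound $d(p,z) \le \mathcal{C}(r)$ in those lemmas, applied with a slightly enlarged radius at each stage.

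The main obstacle I anticipate is \textbf{bookkeeping the error propagation through the three-fold iteration} while keeping all geodesic segments and intermediate points inside a fixed ball where the estimates of Section 5 have uniform constants. Each application of Lemma~\ref{lem:pyth} introduces a new point $z$ and replaces the pair of base points, and the distances involved grow by an $O(r)$ amount each time (as in \eqref{Tbound}–\eqref{lsbound}); one must fix an enlarged working radius $R = R(r)$ at the outset, verify that after at most three iterations everything remains in $B_R(p)$, and choose the tolerance $\varepsilon_0$ in each invocation as a function of the \emph{total} number of steps and the constants $\mathcal{C}(R)$, so that the cumulative error is $< \varepsilon$. A secondary technical point is the lower-bound half: extracting a pointwise-along-$\gamma$ consequence of the integral almost-orthonormality in Lemma~\ref{orthogonal} requires one more application of the Cheeger–Colding segment inequality (as in Proposition~\ref{prop:technical}), with the attendant endpoint perturbation by $\rho$, which then must be absorbed using the global gradient bound exactly as in the proof of Lemma~\ref{lem:pyth}. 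Once both inequalities are in hand with errors controlled by $C(r)(\varepsilon_0 + \rho)$ and a term going to zero with $m(g)$, choosing $\varepsilon_0, \rho < \varepsilon/(2C(r))$ and then $\delta$ small completes the proof.
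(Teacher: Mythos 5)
Your iteration via Lemmas~\ref{lem:pyth} and~\ref{lem:distandui} is the right skeleton and matches the paper's first half, but there is a genuine gap at the single hardest point: you assert that after the three projections ``one arrives at a point $w$ with $d(y,w)$ small (of order the accumulated errors),'' and nothing in your argument delivers this. What the iteration actually yields is that $|u^i(w)-u^i(y)|$ is small for each $i$ (each projection kills one coordinate difference and barely disturbs the others), i.e.\ $|\mathbf{u}(w)-\mathbf{u}(y)|$ is small --- together with the identity $d(x,y)^2=|\mathbf{u}(x)-\mathbf{u}(y)|^2+d(w,y)^2+\Psi(m)$. Converting smallness of $|\mathbf{u}(w)-\mathbf{u}(y)|$ into smallness of $d(w,y)$ is precisely the inequality $d\le|\mathbf{u}(\cdot)-\mathbf{u}(\cdot)|+\varepsilon$ you are trying to prove, so the step is circular as stated; and your other half only gives the reverse inequality $|\mathbf{u}(w)-\mathbf{u}(y)|\le d(w,y)+\varepsilon$, which cannot be inverted. (Note also that since $d(w,y)^2\ge 0$, the iteration already gives the lower bound $d(x,y)^2\ge|\mathbf{u}(x)-\mathbf{u}(y)|^2-\Psi(m)$ for free, so your separate FTC/almost-orthonormality argument for that direction, while valid, is redundant; all the real work sits in the step you skipped.)

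The paper closes this gap with Proposition~\ref{prop:tripleseg}: it produces perturbed points $x^*\in B_\rho(x''')$, $y^*\in B_\rho(y)$ joined by a minimizing geodesic $\gamma$ along which $\int\sum_j|\nabla^2u^j|$ is small \emph{and} at whose endpoint $\{\nabla u^i(y^*)\}$ is almost orthonormal. One then sets $\bar u=\sum_i c_iu^i$ with $c_i=\langle\nabla u^i,\gamma'\rangle(y^*)$, so that $(\bar u\circ\gamma)'(0)=\sum_ic_i^2>\tfrac12$, and Taylor-expands $\bar u\circ\gamma$: the Hessian bound controls the remainder, and smallness of the $u^i$-differences controls $\bar u(x^*)-\bar u(y^*)$, forcing $d(x^*,y^*)$ to be small. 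The essential missing idea in your write-up is this auxiliary linear combination $\bar u$ whose gradient approximates the geodesic direction, together with the \emph{simultaneous} pointwise almost-orthonormality and Hessian control along one geodesic from (a perturbation of) $w$ to (a perturbation of) $y$ --- your tools (Lemma~\ref{orthogonal} plus one more segment-inequality pass) are the right raw ingredients for this, but you never assemble them into an argument that bounds $d(y,w)$.
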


\begin{proof}
In this proof we use the notation $\Psi(m)$ to denote any quantity that can be made arbitrarily small by choosing $m(g)$ sufficiently small, while keeping all of the other quantities $(b,\tau,\bar{m},\kappa,\mathbf{p})$, as well as $r$, fixed. It suffices to prove that
\begin{equation}\label{eq:mainsplitting}
d(x,y)^2 =  |u^1(x)-u^1(y)|^2+|u^2(x)-u^2(y)|^2+|u^3(x)-u^3(y)|^2  + \Psi(m).
\end{equation}
By Lemma \ref{lem:pyth} with $i=1$, there exists
$x'\in \Sigma^1(y) \cap B_{\mathcal{C}(r)}(p)$ such that
\begin{align}\label{eq:splitting1}
\begin{split}
d(x,y)^2&= d(x,x')^2 + d(x',y)^2   +  \Psi(m ) \\
&=  |u^1(x)-u^1(y)|^2 + d(x',y)^2 +  \Psi(m),
\end{split}
\end{align}
where we used~\eqref{eq:distandui1} from Lemma~\ref{lem:distandui}, and $u^1(x')=u^1(y)$, as well as the fact that 
$|u^1(x)-u^1(y)|\le2C_{\mathrm{g}}r$ to obtain the last line. By repeating this argument with $i=2$ for the points $x', y \in B_{\mathcal{C}(r)}(p)$, there exists $x''\in \Sigma^2(y)\cap B_{\mathcal{C}(\mathcal{C}(r))}(p)$ such that
\begin{equation}
d(x',y)^2= |u^2(x')-u^2(y)|^2 + d(x'',y)^2  +  \Psi(m).
\end{equation}
Doing it one more time with $i=3$ for the points $x'', y\in B_{\mathcal{C}(\mathcal{C}(r))}(p)$, there exists $x'''\in \Sigma^3(y)\cap B_{\mathcal{C}( \mathcal{C}(\mathcal{C}(r)))}(p)$ such
such that
\begin{equation}
d(x'',y)^2= |u^3(x'')-u^3(y)|^2 + d(x''',y)^2  +  \Psi(m).
\end{equation}
Putting the last three equations together, we have
\begin{equation}
d(x,y)^2 = |u^1(x)-u^1(y)|^2+|u^2(x')-u^2(y)|^2+|u^3(x'')-u^3(y)|^2 +d(x''',y)^2+ \Psi(m).
\end{equation}
See Figure \ref{pic:surjectiveGH} for a visualization of this construction.
\begin{figure}
\includegraphics[scale=.6]{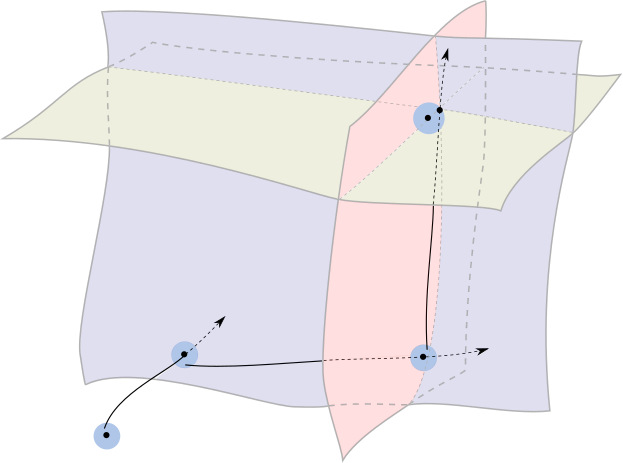}
\begin{picture}(-10,0)
\put(-133,202){\large{$y$}}
\put(-110,217){\large{$x'''$}}
\put(-138,68){\large{$x''$}}
\put(-282,70){\large{$x'$}}
\put(-329,20){\large{$x$}}
\put(-370,80){\large{$\Sigma^1(y)$}}
\put(-163,0){\large{$\Sigma^2(y)$}}
\put(-400,210){\large{$\Sigma^3(y)$}}
\end{picture}
\caption{This illustration depicts the construction used in the proof of Theorem \ref{t:mainGH}. The point $x$ is ``quasi-projected" three times onto $u^1$, $u^2$, and $u^3$ level sets passing through $y$. The projection is accomplished by shooting geodesics, denoted by $\sigma$ in the proof of Lemma \ref{lem:pyth}, towards far away points
$q_{\pm 1}$, $q_{\pm 2}$, and $q_{\pm 3}$ (not shown). Note that these geodesics start at perturbed points that lie within $B_{\rho}(x)$, $B_{\rho}(x')$, and $B_{\rho}(x'')$.}
\label{pic:surjectiveGH}
\end{figure}
Furthermore, by repeated application of~\eqref{eq:distandui2} of Lemma \ref{lem:distandui} (as well as bounds on all of the $u^j$ differences), it follows that
\begin{equation}
d(x,y)^2 = |u^1(x)-u^1(y)|^2+|u^2(x)-u^2(y)|^2+|u^3(x)-u^3(y)|^2 +d(x''',y)^2+ \Psi(m).
\end{equation}

It remains to show that $d(x''',y)= \Psi(m)$. First observe that the triangle
inequality plus repeated application of~\eqref{eq:distandui2} of Lemma \ref{lem:distandui} implies that
\begin{align}\label{eq:udiffsmall}
\begin{split}
\sum_{i=1}^3|u^i(x''')-u^i(y)| =&|u^1(x''')-u^1(y)| + |u^2(x''')-u^2(y)| +\underbrace{|u^3(x''')-u^3(y)|}_{=0}  \\
\leq & |u^1(x''')-u^1(x'')|+|u^1(x'')-u^1(x')| + \underbrace{|u^1(x')-u^1(y)|}_{=0}\\
&+|u^2(x''')-u^2(x'')|+\underbrace{|u^2(x'')-u^2(y)|}_{=0}\\
=& \Psi(m).
\end{split}
\end{align}
This estimate is the main tool that will be used to prove that $d(x''', y)$ is small. The basic intuition is that $\{\nabla u^i\}_{i=1}^3$ is close to forming an orthonormal frame at a point near $y$, and from there we can use a Taylor expansion to control distance in terms of change in $u^i$. As usual, we must employ perturbations in order to have effective pointwise bounds.

To carry out this intuition, let
$\varepsilon_0>0$ and $\rho\in (0,\min(r,1))$, and recall that Proposition \ref{prop:tripleseg} on the ball 
$B_{\mathcal{C}(\mathcal{C}(\mathcal{C}(r)))}(p)$ states that for sufficiently small $m(g)$ depending on $\mathcal{C}(\mathcal{C}(\mathcal{C}(r)))$, $\varepsilon_0$, and $\rho$, we can find points
$x^*\in B_\rho(x''')$ and $y^*\in B_\rho(y)$, along with a minimizing geodesics $\gamma$ from $y^*$ to $x^*$ such that
\begin{equation}\label{eq:triplesegclaim}
\int_0^{d(x^*,y^*)}\sum_{i=1}^3|\nabla^2 u^i|(\gamma(s))\,ds < \varepsilon_0,
\end{equation}
and
\begin{equation}
\label{aonhhwq}
| \langle \nabla u^i, \nabla u^j\rangle(y^*) - \delta^{ij}|  < \varepsilon_0.
\end{equation}
Next, define
\begin{equation}
\bar{u}:=\sum_{i=1}^3 c_i u^i\quad\text{ where }\quad c_i:= (u^i\circ\gamma)'(0)=\langle \nabla u^i, \gamma'\rangle(y^*).
\end{equation}
We provide some motivation for $\bar{u}$. Note that \emph{if} $\{\nabla u^i\}_{i=1}^3$ were an honest orthonormal basis at $y^*$, then the $c_i$'s would be the coefficients of the unit vector $\gamma'(0)$ with respect to that basis, and thus we would have $\gamma'(0)=\nabla\bar{u}$ at $y^*$. Therefore
changes in this $\bar{u}$ along $\gamma$, which we know should be small because of~\eqref{eq:udiffsmall}, should match distances up to first order. Meanwhile, the error beyond first order can be controlled by the Hessian.

Although we do not have orthonormality, \eqref{aonhhwq} does imply that $\varepsilon_0$ can be chosen small enough to guarantee that
\begin{equation} \label{eq:coefficients}
\sum_{i=1}^3 c_i^2 > \frac{1}{2}.
\end{equation}
One can see this by applying the Gram-Schmidt process to the almost orthonormal frame $\{\nabla u^i (y^*)\}_{i=1}^3$ to produce a true orthonormal basis (close to this set of gradient vectors), and then proceed to show that the $c_i$'s are close to the coefficients of $\gamma'(0)$ in that orthonormal basis; these linear algebra computations are straightforward but tedious.
The Taylor expansion of $\bar{u}\circ \gamma$ at $0$ reads
\begin{equation}\label{eq:taylor1}
(\bar{u}\circ\gamma)(t)=(\bar{u}\circ\gamma)(0)+ (\bar{u}\circ\gamma)'(0)\cdot t+\mathcal{R}(t),
\end{equation}
where the remainder term $\mathcal{R}(t)$ satisfies the estimate
\begin{equation} \label{eq:remainder}
|\mathcal{R}(t)|\leq t\int_0^t |(\bar{u}\circ\gamma)''(s)|ds.
\end{equation}
Note that $(\bar{u}\circ\gamma)'(0)=  \sum_{i=1}^3 c_i^2$ by construction, and substituting $t=d(x^*, y^*)$  in the Taylor expansion~\eqref{eq:taylor1} then yields
\begin{equation}\label{eq:taylor2}
(\bar{u}(y^*) -\bar{u}(x^*)) +    \left(\sum_{i=1}^3 c_i^2 \right)\cdot d(x^*,y^*)+   \mathcal{R}(d(x^*, y^*)) = 0.
\end{equation}
Estimating the first term, we find that
\begin{align}\label{eq:ubardiff}
\begin{split}
|\bar{u}(y^*) -\bar{u}(x^*)| & \le \left(\sup_i |c_i|\right) \sum_{i=1}^3 |u^i (y^*)- u^j (x^*)| \\
& \le C_{\mathrm{g}} \sum_{i=1}^3\left( |u^i (y)- u^j (x''')|+2 C_{\mathrm{g}}\rho \right)\\
&= \Psi(m) + 6 C_{\mathrm{g}}^2 \rho
\end{split}
\end{align}
by~\eqref{eq:udiffsmall}. While estimating the third term of~\eqref{eq:taylor2} using~\eqref{eq:remainder} produces
\begin{align} \label{eq:rem_est}
\begin{split}
|\mathcal{R}(d(x^*, y^*))|&\le d(x^*, y^*) \left(\sup_i |c_i|\right) \int_0^{d(x^*, y^*)} \sum_{i=1}^3|\nabla^2 u^i|(\gamma(s))\,ds \\
&<
2\mathcal{C}( \mathcal{C}(\mathcal{C}(r))) C_{\mathrm{g}} \varepsilon_0,
\end{split}
\end{align}
where~\eqref{eq:triplesegclaim} was used for the second inequality. Invoking the triangle inequality and then inserting \eqref{eq:ubardiff}, \eqref{eq:rem_est}, and~\eqref{eq:coefficients} into~\eqref{eq:taylor2}, we obtain
\begin{align}
\begin{split}
d(x''', y) &\le d(x^*, y^*) + 2\rho \\
&\le 2\left[\Psi(m) + 6 C_{\mathrm{g}}^2 \rho +  2\mathcal{C}( \mathcal{C}(\mathcal{C}(r))) C_{\mathrm{g}} \varepsilon_0\right]+ 2\rho.
\end{split}
\end{align}
From this it is clear that $d(x''', y)=\Psi(m)$, because we can select $\varepsilon_0$ and $\rho$ small enough to make their contributions to the above expression as small as we like, and according to Proposition~\ref{prop:tripleseg}, we accomplish this by selecting $m(g)$ sufficiently small with respect to $r$, $\varepsilon_0$, and $\rho$.
\end{proof}

As mentioned, the previous theorem implies that for any $r, \varepsilon>0$, the Gromov-Hausdorff distance between $B_r(p)$ and  $\mathbf{u}(B_r(p))$ can be made less than $\varepsilon$ by choosing $m(g)$ sufficiently small.
But more than that, the estimates imply that the image $\mathbf{u}(B_r(p))$ lies in the Euclidean ball $B^\mathbb{E}_{r+\varepsilon}(0)$. Hence, in order to complete the proof of Theorem \ref{t:main} it only remains to show that for small enough mass, every element of $B^\mathbb{E}_r(0)$ is within a distance $\varepsilon$ from the image $\mathbf{u}(B_r(p))$.
That is, given a point $x= (x^1, x^2, x^3) \in B^\mathbb{E}_r(0)$, we must produce a point $w\in B_r(p)$ whose image under $\mathbf{u}$ is near $x$. Given the intuition that $\mathbf{u}$ is supposed to be near the map $\Phi - \mathbf{p}$, our method of producing $w$ is to start with the point $p$, follow the gradient flow for $\nabla u^1$ for time $x^1$, then follow the gradient flow for $\nabla u^2$ for time $x^2$, and finally follow the gradient flow for $\nabla u^3$ for time $x^3$. However, as has been the case throughout this paper, we will have to make small perturbations at each step in order for this to work.

For each $i=1,2,3$, let $\psi_t^i: M\rightarrow M$ denote the gradient flow for $u^i$, defined by the requirement that
$\frac{\partial}{\partial t} \psi_t^i(q) =\nabla u^i (\psi_t^i(q))$ and $\psi_0^i (q)=q$ for any $q\in M$. Note that since $u^i$ is harmonic, its gradient is divergence-free, and hence $\left(\psi_{t}^i\right)^* dV_{\psi_{t}^i(q)}=dV_q$, where $dV_q$ represents the volume form at the point $q$. Or in other words, the flow is measure-preserving. Motivated by \cite[Lemma 2.1]{KapovitchWilking}, we now show that the gradient flows may be used to approximate distances in the direction orthogonal to level sets in $M$, which is what is needed to implement the argument described above.


\begin{lemma}\label{yui3}
Let $(M,g,p) \in \mathcal{M}$, $0<\rho<\min(r,1)$, and  $\varepsilon>0$. Then for sufficiently small $m(g)$, depending on $r$, $\rho$, and $\varepsilon$, the following holds. For each $y\in M$ and $t_0>0$ with $d(p,y) + C_{\mathrm{g}} t_0 +\rho < r$, there exists $y^*\in B_\rho (y)$ for which
\begin{equation}\label{eq:Ufirst}
 \left | \mathbf{u} \left(\psi^j_{t_0} (y^*)\right)- \mathbf{u}\left(y^*\right) - t_0 e_j \right| < \varepsilon,
\end{equation}
for each $j=1,2,3$, where $e_1, e_2, e_3$ is the standard basis in $\mathbb{R}^3$.
\end{lemma}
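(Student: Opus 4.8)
The goal is to show that the gradient flow $\psi^j_{t_0}$ of the harmonic function $u^j$ moves $\mathbf{u}$ approximately by $t_0 e_j$, provided we are willing to start at a perturbed point $y^*$ near $y$. The natural strategy is to control the variation of each $u^i$ along the flow line $t\mapsto \psi^j_t(y^*)$ by the fundamental theorem of calculus: along this curve we have $\tfrac{d}{dt}u^i(\psi^j_t(y^*)) = \langle \nabla u^i, \nabla u^j\rangle(\psi^j_t(y^*))$, so
\begin{equation}
u^i(\psi^j_{t_0}(y^*)) - u^i(y^*) = \int_0^{t_0} \langle \nabla u^i, \nabla u^j\rangle(\psi^j_s(y^*))\,ds,
\end{equation}
and we want this to be within $\varepsilon/3$ of $\delta^{ij} t_0$. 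Thus it suffices to bound $\int_0^{t_0} |\langle \nabla u^i, \nabla u^j\rangle - \delta^{ij}|(\psi^j_s(y^*))\,ds$. Since $|\nabla u^j|$ is uniformly bounded by $C_{\mathrm{g}}$ (Proposition~\ref{lem:L2est}), the flow speed is at most $C_{\mathrm{g}}$, so the entire flow line from $y^*$ for time $t_0$ stays inside $B_{d(p,y)+C_{\mathrm{g}}t_0+\rho}(p)\subset B_r(p)$ by the hypothesis on $y$ and $t_0$; hence all of our integral Hessian estimates on $B_r(p)$ (or a slightly larger ball) are available.

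\textbf{Key steps.} First, I would reduce pointwise control along the flow line to an $L^1$-in-$s$ control of the Hessian along that line, exactly as in Lemma~\ref{lem:pyth}: write $\langle\nabla u^i,\nabla u^j\rangle(\psi^j_s(y^*)) - \langle\nabla u^i,\nabla u^j\rangle(y^*)$ as an integral of $\tfrac{d}{ds}\langle\nabla u^i,\nabla u^j\rangle$, which is bounded by $2C_{\mathrm{g}}|\nabla^2 u^j| + 2C_{\mathrm{g}}|\nabla^2 u^i|$ times the flow speed $\le C_{\mathrm{g}}$ along the curve. So the whole estimate comes down to two ingredients at the (perturbed) basepoint: (a) the near-orthonormality $|\langle\nabla u^i,\nabla u^j\rangle(y^*) - \delta^{ij}| < \varepsilon_0$, which is exactly what Proposition~\ref{prop:tripleseg}, equation~\eqref{o3qjgoha}, provides; and (b) a bound $\int |\nabla^2 u^i|(\gamma(s))\,ds < \varepsilon_0$ along the relevant curve. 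The subtlety is that~\eqref{o3qjgoha} is an estimate along a \emph{minimizing geodesic}, not along a gradient flow line. To handle this, I would apply the Cheeger–Colding segment inequality (Theorem~\ref{segmentlemma}) with $f = \sum_j |\nabla^2 u^j|$ together with the $L^2$ Hessian bound~\eqref{eq:L2est} — precisely the mechanism used in Proposition~\ref{prop:technical} and Proposition~\ref{prop:tripleseg} — but now to produce a basepoint $y^*\in B_\rho(y)$ along with control of $\mathcal{F}_f$ from $y^*$ to a small ball \emph{around the expected endpoint} $\psi^j_{t_0}(y)$ (or around $\Phi^{-1}(\Phi(y)+t_0 e_j)$ ), as well as~\eqref{o3qjgoha} at $y^*$; this is the same ``find one good perturbed point that works simultaneously for several estimates'' trick used repeatedly above, in particular in the proof of Lemma~\ref{orthogonal}, equation~\eqref{eq:smallave}.

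\textbf{Main obstacle.} The chief difficulty is that the gradient flow line of $u^j$ need not be a minimizing geodesic, so the segment-inequality machinery — which only gives Hessian control along \emph{minimizing geodesics} between the perturbed points — does not directly apply to the curve $s\mapsto \psi^j_s(y^*)$. I expect the resolution to go through the measure-preserving property of the flow (noted just before the lemma statement, since $\operatorname{div}\nabla u^j = \Delta u^j = 0$): because $\psi^j_t$ preserves volume and because $|\nabla^2 u^j|^2$ has small total integral over $B_r(p)$ by~\eqref{eq:L2est}, one can run a mean-value/Fubini argument over the \emph{family} of flow lines issuing from $B_\rho(y)$ — the flowed-out region $\bigcup_{|t|\le t_0}\psi^j_t(B_\rho(y))$ has volume comparable to $|B_\rho(y)|$ up to a factor controlled by $C_{\mathrm{g}} t_0$ — to select a single $y^*\in B_\rho(y)$ whose forward flow line has $\int_0^{t_0}(|\nabla^2 u^i| + |\nabla^2 u^j|)(\psi^j_s(y^*))\,ds$ bounded by $C(r,\rho)\, m(g)^{1/2}$ for all $i$ simultaneously. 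Combining this with~\eqref{o3qjgoha} at the same $y^*$ (obtained by including the orthonormality-producing integral from Proposition~\ref{prop:tripleseg} in the same mean-value selection) and feeding everything into the FTC computation above yields, for sufficiently small $m(g)$ depending on $r$, $\rho$, $\varepsilon$, the estimate $|u^i(\psi^j_{t_0}(y^*)) - u^i(y^*) - \delta^{ij}t_0| < \varepsilon/3$ for each $i$, which is~\eqref{eq:Ufirst}. The bookkeeping — choosing $\varepsilon_0$ small relative to $\varepsilon$ and $t_0 \le r$, then $\rho$ small, then $m(g)$ small — follows the now-familiar pattern and requires no new ideas.
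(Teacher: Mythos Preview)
Your approach is correct and identifies the essential mechanism—the measure-preserving property of $\psi^j_t$ combined with Fubini and a mean-value selection over $B_\rho(y)$—but you take a longer route than the paper. You split the integrand $|\langle\nabla u^i,\nabla u^j\rangle(\psi^j_s(y^*))-\delta^{ij}|$ into a basepoint term $|\langle\nabla u^i,\nabla u^j\rangle(y^*)-\delta^{ij}|$ (controlled via the Proposition~\ref{prop:tripleseg} orthonormality machinery) plus a Hessian integral along the flow line (controlled by measure-preservation plus the $L^2$ Hessian bound). The paper instead applies the measure-preserving Fubini argument \emph{directly} to the quantity $|\langle\nabla u^i,\nabla u^j\rangle-\delta^{ij}|$ along the flow lines, obtaining
\[
\int_{B_\rho(y)}\int_0^{t_0}\bigl|\langle\nabla u^i,\nabla u^j\rangle-\delta^{ij}\bigr|\bigl(\psi^j_t(\xi)\bigr)\,dt\,dV_\xi \le t_0\int_{B_r(p)}\bigl|\langle\nabla u^i,\nabla u^j\rangle-\delta^{ij}\bigr|\,dV,
\]
and then invokes Lemma~\ref{orthogonal} to make the right side small. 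A single mean-value selection over $B_\rho(y)$ then yields $y^*$; no separate pointwise orthonormality at $y^*$ is needed, and the simultaneous selection against Proposition~\ref{prop:tripleseg} integrals is entirely bypassed. Your decomposition works, but it duplicates effort already packaged in Lemma~\ref{orthogonal}. (Also, the geodesic-to-endpoint idea you float in the ``Key steps'' paragraph is indeed a dead end, as you recognize; only the flow-line averaging in your ``Main obstacle'' paragraph is the operative argument.)
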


\begin{proof}
The estimate to be proved is equivalent to
\begin{equation}\label{eq:yui3goal}
| u^i (\psi^j_{t_0} (y^*))- u^i(y^*) - \delta^{ij} t_0 | <\varepsilon,
\end{equation}
and we can estimate the left side using the FTC:
\begin{align}\label{eq:udifferences}
\begin{split}
\left| u^i (\psi^j_{t_0} (y^*))- u^i(y^*) - \delta^{ij} t_0 \right|
&= \left| \int_{0}^{t_0}\left( \frac{\partial}{\partial t} u^i(\psi_t^j(y^*)) -\delta^{ij} \right)\,dt  \right| \\
&\le
\int_{0}^{t_0}\left| \langle\nabla u^i,\nabla u^j\rangle  -\delta^{ij}\right|(\psi_t^j(y^*))\, dt.
\end{split}
\end{align}
We will use Lemma~\ref{orthogonal} to show that we can bound this quantity.


First observe that the global gradient bound~\eqref{globalgradient} of Proposition~\ref{lem:L2est} implies a uniform estimate on the distance the flow can move in a fixed time. That is, for any $\xi\in M$, we have
\begin{equation}\label{tttt}
d(\psi_{t}^{j}(\xi),\xi)\leq\int_{0}^{t}\left|\frac{\partial}{\partial s} \psi_s^j(\xi)\right|\,ds=\int_{0}^{t}|\nabla u^j(\psi_s^j(\xi))| \,ds
\leq C_{\mathrm{g}} t.
\end{equation}
Combined with the assumption $d(p,y) + C_{\mathrm{g}} t_0 +\rho < r$, it follows that if $\xi\in B_\rho(y)$, then $\psi_{t}^{j}(\xi)\in B_r(p)$ for all $t\in [0,t_0]$. Using Fubini's Theorem and the measure preserving property of the gradient flow, we have
\begin{align}
\begin{split}
\int_{B_\rho(y)}&\left (\int_{0}^{t_0}|\langle\nabla u^i,\nabla u^j\rangle -\delta^{ij}|(\psi_{t}^{j}(\xi))dt \right) dV_\xi\\
&=
\int_{0}^{t_0} \left(\int_{B_\rho(y)}|\langle\nabla u^i,\nabla u^j\rangle -\delta^{ij}|(\psi_{t}^{j}(\xi))\,\left(\psi_{t}^j\right)^* dV_{\psi_{t}^j(\xi)}\right)dt\\
&=\int_{0}^{t_0}\left( \int_{\psi_{t}^{j}\left(B_\rho(y)\right)}|\langle\nabla u^i,\nabla u^j\rangle -\delta^{ij}|(\eta)\,dV_\eta\right)  dt\\
&\leq  \int_{0}^{t_0}\left(\int_{B_{r}(p)}|\langle\nabla u^i,\nabla u^j\rangle -\delta^{ij}|\,dV\right) dt.
\end{split}
\end{align}
Combining this with~\eqref{eq:udifferences} and the mean value-like inequality~\eqref{MV}, we can find a point $y^*\in B_\rho(y)$ such that
\begin{equation}
| u^i (\psi^j_{t_0} (y^*))- u^i(y^*) - \delta^{ij} t_0 |  < \frac{2r}{|B_\rho(y)|}   \int_{B_{r}(p)}|\langle\nabla u^i,\nabla u^j\rangle -\delta^{ij}|\,dV.
\end{equation}
By Lemma \ref{orthogonal}, the right side can be made less than $\varepsilon$ for sufficiently small mass, since the $|B_\rho(y)|$ term can be handled as in the proof of Proposition~\ref{prop:technical}, giving us the desired estimate~\eqref{eq:yui3goal}.
\end{proof}


We are now ready to complete the second step in the proof of our main Theorem \ref{t:main}.

\begin{theorem}\label{surjectiveGH}
Let $(M,g,p)\in \mathcal{M}(b,\tau,\bar{m},\kappa,\mathbf{p})$.
Given $\varrho, \varepsilon>0$, there is a $\delta>0$ such that if $m(g)<\delta$, then the Euclidean ball
$B_{\varrho}^{\mathbb{E}}(0) \subset \mathbb{R}^3$ lies within an $\varepsilon$-neighborhood of $\mathbf{u}(B_{\varrho}(p))$, where $B_{\varrho}(p)$ is a geodesic ball in $(M,g)$.
\end{theorem}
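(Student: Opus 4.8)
The plan is to construct, for each point $x = (x^1, x^2, x^3) \in B^{\mathbb{E}}_\varrho(0)$, a point $w \in B_\varrho(p)$ with $|\mathbf{u}(w) - x| < \varepsilon$, by following the three gradient flows $\psi^1, \psi^2, \psi^3$ successively, inserting a perturbation before each flow step as dictated by Lemma~\ref{yui3}. Together with Theorem~\ref{t:mainGH} (which shows $\mathbf{u}$ is an $\varepsilon$-isometry onto its image and that $\mathbf{u}(B_\varrho(p)) \subset B^{\mathbb{E}}_{\varrho+\varepsilon}(0)$), this completes the proof of Theorem~\ref{t:main}, since a map that is an almost-isometry and whose image is almost-onto a ball yields a Gromov-Hausdorff approximation.

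Here are the steps in order. First, fix $\varepsilon > 0$ and choose a small parameter $\rho > 0$ and a small auxiliary $\varepsilon_0 > 0$ (to be pinned down at the end in terms of $\varepsilon$). Since $x \in B^{\mathbb{E}}_\varrho(0)$, we have $|x^1| + |x^2| + |x^3| < \sqrt{3}\,\varrho$; more importantly, we will arrange the construction inside a slightly larger ball $B_{r}(p)$ with $r = r(\varrho)$ chosen so that all intermediate points stay within $B_r(p)$ — this uses the flow displacement bound $d(\psi^j_t(\xi), \xi) \le C_{\mathrm{g}} t$ from~\eqref{tttt} (equation in the proof of Lemma~\ref{yui3}), so that after three flows of total time at most $\sqrt{3}\,\varrho$ we move a distance at most $\sqrt{3}\, C_{\mathrm{g}}\, \varrho$ from $p$; set $r := 2\sqrt{3}(1 + C_{\mathrm{g}})\varrho$, say. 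Second, apply Lemma~\ref{yui3} three times. Start with $y_0 = p$; apply Lemma~\ref{yui3} with $y = y_0$, $t_0 = x^1$ to obtain $y_0^* \in B_\rho(y_0)$ with $|\mathbf{u}(\psi^1_{x^1}(y_0^*)) - \mathbf{u}(y_0^*) - x^1 e_1| < \varepsilon_0$; set $y_1 := \psi^1_{x^1}(y_0^*)$. Repeat with $y = y_1$, $t_0 = x^2$, $j = 2$ to get $y_1^* \in B_\rho(y_1)$ and $y_2 := \psi^2_{x^2}(y_1^*)$ with $|\mathbf{u}(y_2) - \mathbf{u}(y_1^*) - x^2 e_2| < \varepsilon_0$. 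Once more with $y = y_2$, $t_0 = x^3$, $j = 3$ to get $y_2^* \in B_\rho(y_2)$ and $w := y_3 := \psi^3_{x^3}(y_2^*)$ with $|\mathbf{u}(y_3) - \mathbf{u}(y_2^*) - x^3 e_3| < \varepsilon_0$. At each application we must check the hypothesis $d(p, y) + C_{\mathrm{g}} t_0 + \rho < r$ of Lemma~\ref{yui3}, which holds for $\rho$ small since $d(p, y_k) \le \sqrt{3}\, C_{\mathrm{g}}\varrho + k\rho$ by~\eqref{tttt} and the triangle inequality, well inside $r$.

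Third, telescope the three estimates. Using the gradient bound~\eqref{globalgradient} to control $|\mathbf{u}(y_k^*) - \mathbf{u}(y_k)| \le \sqrt{3}\, C_{\mathrm{g}}\rho$ (each $u^i$ is $C_{\mathrm{g}}$-Lipschitz, and $d(y_k^*, y_k) < \rho$), we add:
\begin{align*}
|\mathbf{u}(w) - \mathbf{u}(p) - (x^1 e_1 + x^2 e_2 + x^3 e_3)| &\le |\mathbf{u}(y_3) - \mathbf{u}(y_2^*) - x^3 e_3| + |\mathbf{u}(y_2^*) - \mathbf{u}(y_2)| \\
&\quad + |\mathbf{u}(y_2) - \mathbf{u}(y_1^*) - x^2 e_2| + |\mathbf{u}(y_1^*) - \mathbf{u}(y_1)| \\
&\quad + |\mathbf{u}(y_1) - \mathbf{u}(y_0^*) - x^1 e_1| + |\mathbf{u}(y_0^*) - \mathbf{u}(y_0)|.
\end{align*}
Each of the three ``flow'' terms is $< \varepsilon_0$ and each of the three ``perturbation'' terms is $\le \sqrt{3}\, C_{\mathrm{g}}\rho$, so the right-hand side is $< 3\varepsilon_0 + 3\sqrt{3}\, C_{\mathrm{g}}\rho$. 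Since $\mathbf{u}(p) = 0$ by the normalization in Definition~\ref{def:class}, and $x = x^1 e_1 + x^2 e_2 + x^3 e_3$, we get $|\mathbf{u}(w) - x| < 3\varepsilon_0 + 3\sqrt{3}\, C_{\mathrm{g}}\rho$. Finally, choose $\rho$ and $\varepsilon_0$ small enough that $3\varepsilon_0 + 3\sqrt{3}\,C_{\mathrm{g}}\rho < \varepsilon$; Lemma~\ref{yui3} then dictates how small $m(g) < \delta$ must be (depending on $r = r(\varrho)$, $\rho$, $\varepsilon_0$, hence ultimately on $\varrho$ and $\varepsilon$). We also need $w \in B_\varrho(p)$: by~\eqref{tttt}, $d(p, w) \le d(p, y_0^*) + d(y_0^*, y_1) + \cdots \le (C_{\mathrm{g}}|x^1| + C_{\mathrm{g}}|x^2| + C_{\mathrm{g}}|x^3|) + 3\rho$, which is bounded by $\sqrt{3}\,C_{\mathrm{g}}\varrho + 3\rho$; strictly speaking this only places $w$ in a ball of radius comparable to $C_{\mathrm{g}}\varrho$, not radius $\varrho$, so one should phrase the statement (as the theorem does) with $B_\varrho(p)$ on the domain side but realize $w$ in $B_{r}(p)$ — in fact the cleanest fix is to note that since $\mathbf{u}$ is an $\varepsilon$-isometry with $\mathbf{u}(p) = 0$, and $|\mathbf{u}(w)| < \varrho + \varepsilon$ forces $d(p, w) < \varrho + 2\varepsilon$, so after a trivial rescaling of $\varrho$ we may assume $w \in B_\varrho(p)$.

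The main obstacle is bookkeeping: ensuring that every intermediate point $y_k$, and every flowed point $\psi^j_t(y_{k-1}^*)$ for $t$ up to $|x^k|$, remains inside the ball $B_r(p)$ on which all our uniform estimates (the gradient bound, Lemma~\ref{orthogonal}, hence Lemma~\ref{yui3}) are valid — this is handled by the displacement bound~\eqref{tttt} and by choosing $r$ generously from the start, but it requires care that the radius $r$ is fixed \emph{before} $\rho$ and $m(g)$ are chosen. There is no genuinely hard analytic step here: all the real work (the $L^2$ Hessian bound, the segment inequality arguments, the almost-orthonormality) has already been absorbed into Lemma~\ref{yui3} and Theorem~\ref{t:mainGH}; this theorem is the ``assembly'' step. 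The only subtlety worth flagging explicitly in the writeup is the order of quantifiers ($\varepsilon \rightsquigarrow r, \rho, \varepsilon_0 \rightsquigarrow \delta$) and the fact that the perturbations accumulate additively in exactly three steps, so their total contribution $3\sqrt{3}\,C_{\mathrm{g}}\rho$ is uniformly controlled.
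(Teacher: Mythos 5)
Your proposal is correct and follows essentially the same route as the paper's proof: three successive applications of Lemma~\ref{yui3} along the gradient flows with a $\rho$-perturbation before each step, a telescoping estimate controlled by the displacement bound~\eqref{tttt} and the Lipschitz bound~\eqref{globalgradient}, and an appeal to Theorem~\ref{t:mainGH} to place $w$ inside $B_\varrho(p)$. The only cosmetic difference is that the paper handles the final containment issue by starting with $x$ in the slightly shrunken ball $B^{\mathbb{E}}_{\varrho-\varepsilon/2}(0)$ rather than by your end-of-argument rescaling remark; both resolutions are equivalent.
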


\begin{proof}
First, set $r=3(C_{\mathrm{g}}+ 1)\varrho$ where $C_{\mathrm{g}}$ is the global gradient bound constant from Proposition~\ref{lem:L2est}. We will apply the previous lemma with this choice of $r$. Let $x=(x^1,x^2,x^3)\in B_{\varrho-\varepsilon/2}^{\mathbb{E}}(0)$. Our goal is to produce a point $w \in B_{\varrho}(p)$ such that $\left|\mathbf{u}(w)-x\right|<\varepsilon/2$, when $m(g)$ is sufficiently small.

The point $w$ will be found by applying Lemma~\ref{yui3} three times. Let $w_1:=p$ and apply Lemma~\ref{yui3} with $y=w_1$ and $t_0= x^1$ to find a point $w^*_1 \in B_\rho(w_1)$ such that
\begin{equation}
\left| \mathbf{u} (\psi^1_{x^1} (w^*_1))- \mathbf{u}(w^*_1) -  x^1 e_1 \right| < \varepsilon/16.
\end{equation}
Now define $w_2:=\psi^1_{x^1}(w^*_1)$ and apply Lemma~\ref{yui3} with $y= w_2$ and $t_0=x^2$ to find a point $w^*_2\in B_\rho(w_2)$ such that
\begin{equation}
\left| \mathbf{u} (\psi^2_{x^2} (w^*_2))- \mathbf{u}(w^*_2) -  x^2 e_2 \right| <  \varepsilon/16.
\end{equation}
Note that by \eqref{tttt} we have
\begin{equation}
d(p,w_2)=d(p,\psi_{x^1}^1(w^*_1))\leq d(p,w^*_1)+d(w^*_1,\psi_{x^1}^1(w^*_1))\leq \rho +C_{\mathrm{g}} x^1<\varrho+C_{\mathrm{g}}\varrho,
\end{equation}
and therefore the conditions of Lemma \ref{yui3} are satisfied here  since
\begin{equation}
d(p,y)+C_{\mathrm{g}} t_0 +\rho=d(p,w_2)+C_{\mathrm{g}} x^2 +\rho<2C_{\mathrm{g}} \varrho+2\varrho< r.
\end{equation}
Defining $w_3:=\psi^2_{x^2} (w^*_2)$,
we do this one more time with $y=w_3$ and $t_0 =x^3$ to find $w^*_3\in B_{\rho}(w_3)$ such that
\begin{equation}
\left| \mathbf{u} (\psi^3_{x^3} (w^*_3))- \mathbf{u}(w^*_3) -  x^3 e_3 \right| <  \varepsilon/16.
\end{equation}
Again, the assumptions of the previous lemma are valid since
\begin{equation}
d(p,w_3)\leq d(p,w^*_2)+d(w^*_2,\psi_{x^2}^2(w^*_2))
\leq d(p,w_2)+d(w_2,w^*_2)+C_{\mathrm{g}} x^2< 2\varrho+2C_{\mathrm{g}}\varrho,
\end{equation}
which implies that for this choice of $y$ and $t_0$,
\begin{equation}
d(p,y)+C_{\mathrm{g}} t_0 +\rho=d(p,w_3)+C_{\mathrm{g}} x^3 +\rho< 3(C_{\mathrm{g}} +1)\varrho=r.
\end{equation}

We now claim that the point $w:=\psi^3_{x^3} (w^*_3)$ has the property we desire if we choose $\rho$ appropriately. Simply by using the triangle inequality and breaking down $x=\sum_{i=1}^3 x^i e_i$, we can estimate
\begin{align}
\begin{split}
\left|\mathbf{u}(w) - x\right| &\leq  \sum_{i=1}^3 \left(\left|\mathbf{u}(\psi^i_{x^i} (w^*_i))- \mathbf{u}(w^*_i) -x^i e_i \right| + \left|\mathbf{u}(w^*_i)- \mathbf{u}(w_i)\right| \right)\\
&<  3\varepsilon/16 + 9 C_{\mathrm{g}} \rho,
\end{split}
\end{align}
which may be made smaller than $\varepsilon/4$ by choosing $\rho$ small enough. From our construction, we know that $w\in B_r(p)$. The only thing left to verify is that $w\in B_{\varrho}(p)$. For this, we use Theorem~\ref{t:mainGH} to see that for small enough mass, we have
\begin{align}
\begin{split}
d(w, p) &< \left|\mathbf{u}(w) - \mathbf{u}(p)\right| + \varepsilon/4 \\
&=|\mathbf{u}(w)| + \varepsilon/4 \\
&< |x| +  \varepsilon/4  +  \varepsilon/4 \\
&< (\varrho- \varepsilon/2) + \varepsilon/2 = \varrho.
\end{split}
\end{align}
\end{proof}

\begin{proof}[Proof of Theorem \ref{t:main}.]
Let $(M, g, p)\in \mathcal{M}(b,\tau,\bar{m},\kappa,\mathbf{p})$, and let $\varrho>0$. The statement of Theorem \ref{t:main} is equivalent to saying that for any $\varepsilon>0$, there exists $\delta>0$ such that if $m(g)<\delta$, then  $d_{GH}(B_{\varrho}(p),B_{\varrho}^{\mathbb{E}} (0))<\varepsilon$, where $d_{GH}$ is the Gromov-Hausdorff distance.
By Theorem~\ref{t:mainGH}, we can choose $\delta$ so that $d_{GH}\left(B_{\varrho}(p), \mathbf{u}\left(B_{\varrho}(p) \right) \right) <\varepsilon/2$ and also that  $\mathbf{u}\left(B_{\varrho}(p) \right) \subset  B_{\varrho+\varepsilon}^{\mathbb{E}} (0)$. By Theorem~\ref{surjectiveGH}, we can also choose $\delta$ small enough so that 
$d_{GH}\left(\mathbf{u}\left(B_{\varrho}(p)\right),B_{\varrho}^{\mathbb{E}}(0) \right) <\varepsilon/2$. The result then follows from the triangle inequality for $d_{GH}$.
\end{proof}

\section{Proof of Theorem \ref{t:main1}}
\label{sec7} \setcounter{equation}{0}
\setcounter{section}{7}

Consider the first part of Theorem \ref{t:main1}. For this we can follow the same line of argument used to prove Theorem \ref{t:main}. We need only identify which propositions and lemmas use the nonnegative scalar curvature assumption directly. It turns out that only Propositions \ref{prop:sup} and \ref{lem:L2est} use this hypothesis directly. All other  results rely on nonnegative scalar curvature solely through these two propositions.
Here is our modified version of Proposition~\ref{prop:sup}, which loosens the nonnegative scalar curvature assumption, but also requires a Ricci lower bound.

\begin{proposition}\label{prop:sup2}
Let $(M, g, p)$ be a pointed orientable complete $(b,\tau)$-asymptotically flat manifold with $H_2(M,\mathbb{Z})=0$, $\Ric(g)\ge -2\kappa g$, and $m(g) \leq\bar{m}$.  Further assume that $R_{g}\geq |X|^2 +\div X -\psi$, for some vector field $X\in C^1_{-2-\delta}(M)$ and some
nonnegative function $\psi\in L^1(M)$ supported in $M_{\hat{r}}$.  For sufficiently large  $r_0>0$ (depending on $\hat{r}$, $b$, and $\tau$) and $r_1>r_0$, there exist constants $C(\hat{r}, r_0, r_1, b,\tau,\bar{m}, \kappa)$ and $\hat{\varepsilon}(\hat{r}, r_0, r_1, b, \tau, \bar{m}, \kappa)>0$ such that if
$\| \psi  \|_{L^1( M)} \le \hat{\varepsilon}$,
then
\begin{equation}\label{e:supbound2}
\sup_{M_{r_1}}|u|\leq C(\hat{r}, r_0, r_1, b,\tau,\bar{m}, \kappa),
\end{equation}
where $u$ is an asymptotically linear harmonic function (as defined in Section~\ref{sec2}) whose average over $\mathcal{A}_{r_0,r_1}$ is zero.
\end{proposition}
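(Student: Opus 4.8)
The plan is to rerun the proof of Proposition~\ref{prop:sup}, keeping track of the single extra term produced by the weakened curvature hypothesis, and then to absorb that term using the Cheng-Yau gradient estimate together with the smallness of $\|\psi\|_{L^1}$. First I would fix $r_0$ large enough (depending on $\hat r,b,\tau$) that $g$ is uniformly equivalent to the Euclidean metric on $M\setminus M_{r_0/2}$ and, in addition, $r_0\ge 2\hat r$. Since $r_0\ge 2\hat r$ forces $M_{\hat r}\subset M_{r_0/2}$, and since any path leaving $M_{r_0/2}$ must traverse the annulus $\mathcal{A}_{r_0/2,r_0}$, on which $g$ is uniformly Euclidean, one obtains a uniform lower bound $d(q,\mathcal{S}_{r_0})\ge c(r_0)>0$ for all $q\in M_{\hat r}$; hence there is $s=s(r_0)>0$ with $B_{2s}(q)\subset M_{r_0}\subset M_{r_1}$ for every $q\in M_{\hat r}$. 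This inclusion is what will let the Ricci lower bound enter, via Cheng-Yau, at the very end.

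For the modified energy estimate, I would combine the elementary bound $|\nabla\sqrt{|\nabla u|}|^2\le|\nabla^2 u|^2/(4|\nabla u|)$ from~\eqref{eq:straight} with the mass inequality~\eqref{t:massest} exactly as in the derivation of~\eqref{uyp0}, except that $R_g$ is now estimated from below by $|X|^2+\div X-\psi$. Since $X\in C^1_{-2-\delta}(M)$ while $|\nabla u|$ is bounded and Lipschitz with $\bigl|\nabla|\nabla u|\bigr|=O(|x|^{-1-\tau})$, the divergence theorem applies with no boundary contribution at infinity, so that $\int_M(\div X)|\nabla u|\,dV=-\int_M\langle X,\nabla|\nabla u|\rangle\,dV$; writing $\nabla|\nabla u|=2\sqrt{|\nabla u|}\,\nabla\sqrt{|\nabla u|}$ and then applying the inequality $2ab\le a^2+b^2$ bounds $\langle X,\nabla|\nabla u|\rangle$ by $|X|^2|\nabla u|+|\nabla\sqrt{|\nabla u|}|^2$. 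The two $|X|^2|\nabla u|$ contributions cancel, leaving
\begin{equation*}
\int_M\bigl|\nabla\sqrt{|\nabla u|}\,\bigr|^2\,dV\ \le\ \frac{16\pi}{3}\,m(g)+\frac13\int_M\psi\,|\nabla u|\,dV\ \le\ \frac{16\pi}{3}\,\bar m+\frac13\,\|\psi\|_{L^1(M)}\,\sup_{M_{\hat r}}|\nabla u|,
\end{equation*}
where the last step uses $m(g)\le\bar m$ and the fact that $\psi$ is supported in the precompact set $M_{\hat r}$.

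Let $A$ denote this right-hand side. Feeding $\int_M|\nabla\sqrt{|\nabla u|}|^2\le A$ into the remainder of the proof of Proposition~\ref{prop:sup} --- the uniform Sobolev inequality~\eqref{eq:afpoin}, the resulting $L^3$ and then (by H\"{o}lder) $L^2$ bound~\eqref{-ojh} and~\eqref{adfew} for $\nabla u$ on an annulus, the interior elliptic estimate~\eqref{estHess}, the Poincar\'{e} inequality~\eqref{Poincare} via the zero-average normalization, and finally the three-dimensional embedding $H^2\hookrightarrow C^0$ together with the maximum principle --- yields a bound on $S:=\sup_{M_{r_1}}|u|$ that is \emph{linear} in $A$, since the cube in~\eqref{eq:afpoin} is undone by the exponent $2/3$ in the H\"{o}lder step and by the final square root; that is, $S\le C'(1+A)$ for some $C'=C'(\hat r,r_0,r_1,b,\tau)$. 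To close the loop I would apply the Cheng-Yau estimate (Theorem~\ref{t:CY}) on the balls $B_s(q)\subset B_{2s}(q)\subset M_{r_1}$ from the first step, obtaining $\sup_{M_{\hat r}}|\nabla u|\le C_{\mathrm{CY}}(r_0,\kappa)\,S$, hence $A\le\tfrac{16\pi}{3}\bar m+\tfrac13 C_{\mathrm{CY}}\|\psi\|_{L^1}S$ and
\begin{equation*}
S\ \le\ C'\Bigl(1+\tfrac{16\pi}{3}\bar m\Bigr)+\tfrac13\,C'C_{\mathrm{CY}}\,\|\psi\|_{L^1(M)}\,S.
\end{equation*}
Choosing $\hat\varepsilon=\hat\varepsilon(\hat r,r_0,r_1,b,\tau,\bar m,\kappa)$ small enough that $\tfrac13 C'C_{\mathrm{CY}}\hat\varepsilon\le\tfrac12$, the $S$-term on the right is absorbed into the left, giving $S\le 2C'\bigl(1+\tfrac{16\pi}{3}\bar m\bigr)=:C(\hat r,r_0,r_1,b,\tau,\bar m,\kappa)$, which is~\eqref{e:supbound2}.

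The main obstacle is precisely this circularity: the extra term $\int_M\psi|\nabla u|$ is controlled only by $\sup_{M_{\hat r}}|\nabla u|$, which in turn is controlled --- through Cheng-Yau, and so through the Ricci lower bound --- by $\sup_{M_{r_1}}|u|$, the quantity being estimated. It is resolved by two fortunate features: the final estimate is linear rather than superlinear in the error term, so that it suffices to make $\|\psi\|_{L^1}$ small rather than to run any iteration; and the Ricci lower bound keeps the Cheng-Yau constant $C_{\mathrm{CY}}$ uniform over the class $\mathcal{M}$.
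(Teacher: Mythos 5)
Your proposal is correct and follows essentially the same route as the paper's proof: integrate the $\div X$ term by parts in the mass inequality, absorb the resulting cross term into the Hessian and $|X|^2$ terms, run the Proposition~\ref{prop:sup} machinery to get a bound on $\sup_{M_{r_1}}|u|$ that is linear in the extra $\int_M\psi|\nabla u|$ contribution, and then close the circle with the Cheng--Yau estimate and the smallness of $\|\psi\|_{L^1}$. The only (immaterial) difference is bookkeeping: the paper bounds $|\langle X,\nabla|\nabla u|\rangle|$ by $|X|\,|\nabla^2u|$ via Kato and then applies Young's inequality, while you split off $|\nabla\sqrt{|\nabla u|}|^2$ directly, arriving at the same absorption with slightly different constants.
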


\begin{proof}
Choose $r_0> \hat{r}$ large enough (depending on $b$, $\tau$) so that $g$ is uniformly equivalent to the Euclidean metric on $M\setminus M_{r_0}$, and let
$r_1>r_0$. We will show that the $X$ term can essentially be ignored, and  that the $\psi$ term has a negligible effect.
Within the mass inequality (Theorem~\ref{t:mass}) and the scalar curvature assumption, we may integrate the divergence term by parts to find
\begin{align}\label{jfhswitribh}
\begin{split}
16\pi m(g) &\geq\int_{M}\left(\frac{|\nabla^2 {u}|^2}{|\nabla {u}|}+\left(|X|^2 +\div X - \psi\right)|\nabla{u}|\right)dV\\
&\geq\int_{M}\left(\frac{|\nabla^2 {u}|^2}{|\nabla {u}|}
+|X|^2\cdot |\nabla{u}|  - |X|\cdot |\nabla^2 u| - \psi |\nabla u|   \right)dV\\
&\geq\int_{M}\left(\frac{|\nabla^2 {u}|^2}{2|\nabla {u}|}
+\frac{1}{2}|X|^2\cdot |\nabla{u}|  - \psi |\nabla u|   \right)dV\\
&\geq \int_{M}2 \left|\nabla \sqrt{|\nabla {u}|}\right|^2\, dV
-\left(\sup_{M_{\hat{r}}}|\nabla {u}|\right) \hat{\varepsilon},
\end{split}
\end{align}
where Young's inequality was used in the third line, and $\|\psi\|_{L^1(M)}\leq \hat{\varepsilon}$ together with \eqref{eq:straight} was used in the last line. Note that the hypothesis $X\in C^1_{-2-\delta}$ guarantees that all terms involving $X$ are integrable, and that the boundary term arising from integration by parts vanishes.

Following the next steps of the proof of Proposition \ref{prop:sup}, we see that in place of~\eqref{adfew} we obtain
\begin{equation}\label{adfew2}
\int_{\mathcal{A}_{r_0/2, 2r_1}}|\nabla {u}|^2 \,dV \leq C_1(r_0, r_1, b,\tau, \bar{m}) + C_2(r_0, r_1, b,\tau)\cdot
\left(\sup_{M_{\hat{r}}}|\nabla {u}|\right)^2 \hat{\varepsilon}^2.
\end{equation}
This may be combined with~\eqref{estHess},~\eqref{Poincare}, the Sobolev inequality for $C^0\subset H^2$, and the maximum principle for $u$, all of which remain unchanged, to produce
\begin{equation} \label{eq:supepsilon}
\sup_{M_{r_0}} |u| \le C_3(r_0, r_1, b,\tau, \bar{m}) + C_4(r_0, r_1, b,\tau)\cdot
\left(\sup_{M_{\hat{r}}}|\nabla {u}|\right) \hat{\varepsilon}.
\end{equation}
Next we use the Ricci lower bound in order to invoke the Cheng-Yau gradient estimate. In particular, choose $\rho(\hat{r}, r_0, b, \tau)$ small enough so that every $\rho$-ball centered at points $q\in M_{\hat{r}}$ is contained in $M_{r_0}$, then we can apply Theorem~\ref{t:CY} to the balls $B_{\rho/2}(q)\subset B_{\rho}(q)$ to get
\begin{equation}
\sup_{M_{\hat{r}}}|\nabla {u}| \le C_5(\hat{r}, r_0, b, \tau, \kappa)\cdot \sup_{M_{r_0}} |u|.
\end{equation}
This estimate shows that for sufficiently small $\hat{\varepsilon}$, the second term on the right side of~\eqref{eq:supepsilon} can be absorbed into the left side, proving the desired inequality.
\end{proof}

\begin{remark}
In view of the application of Young's inequality in the proof above, it is clear that the $|X_l|^2_{g_l}$ term in~\eqref{alaifiaw3} in the hypotheses of Theorem~\ref{t:main1} can be replaced by $c|X_l|^2_{g_l}$ for any constant $c>\frac{1}{4}$.
\end{remark}

In order to prove Theorem~\ref{t:main1}, for every lemma and proposition depending on Proposition~\ref{prop:sup2}, we will have to add hypotheses involving $\hat{r}$ and  $\hat{\varepsilon}$, but in addition to that, in view of~\eqref{jfhswitribh} and the proof of
Proposition \ref{lem:L2est}, we see that the biggest change is that we  have to replace~\eqref{eq:L2est} of Proposition \ref{lem:L2est}  by
\begin{equation}
\int_M|\nabla ^2 u^i|^2\,dV \leq C(\hat{r}, b,\tau,\bar{m},\kappa)\cdot \left[m(g) + \| \psi \|_{L^1(M)}\right].
\end{equation}
From there on it just means that every assumption of  smallness of $m(g)$ should be replaced by smallness of the sum $m(g) + \| \psi \|_{L^1(M)}$.
Of course, we will also need to alter the family $\mathcal{M}$ to include the parameter $\hat{\varepsilon}$ controlling  $\| \psi \|_{L^1(M)}$, chosen small enough so that  Proposition~\ref{prop:sup2} holds.


For the second part of Theorem \ref{t:main1}, we will show that the assumption that $R_{g_l}$ is bounded above by some constant $C_0$ implies that $H^2(M_l, \mathbb{Z})=0$ for sufficiently large $l$. Suppose that was not the case. Then there would be a subsequence with non-vanishing second homology, which implies the existence of an outermost minimal surface $\Sigma_l$ in $(M_l, g_l)$.  According to \cite[Lemma 4.1]{HI}, which does not require nonnegative scalar curvature, each component of $\Sigma$ is a 2-sphere.  If $K_l$, $A_l$, and $\nu_l$ denote the Gaussian curvature, second fundamental form, and unit normal of $\Sigma_l$, respectively, then  the Gauss-Bonnet and the traced Gauss equation tell us that
\begin{align}\label{eq:nominimal}
\begin{split}
4\pi &\le  \int_{\Sigma_l}K_l \,dA_l \\
&=  \int_{\Sigma_l}   \frac{1}{2} \left( R_{g_l}  -|A_l|^2-2\mathrm{Ric}_{g_l}(\nu_l,\nu_l)\right)dA_l \\
&\le |\Sigma_l| \left(\frac{1}{2}C_0 +2\kappa\right).
\end{split}
\end{align}
Since $m(g_l)\to0$, the Riemannian Penrose inequality~\cite{Bray} implies that $|\Sigma_l|\to 0$ as well. Therefore we have a contradiction, completing the proof of Theorem~\ref{t:main1}.

\begin{remark}
Careful examination of the proof above shows that all we really needed was to assume that $\| R_{g_l}^+\|_{L^p(\Sigma_l)}\le C_0$ for some $p\in(1,\infty]$.
\end{remark}




\begin{thebibliography}{99}


\bibitem{Allen} B. Allen, \textit{Sobolev stability of the positive mass theorem and Riemannian Penrose inequality using inverse mean curvature flow}, Gen. Relativity Gravitation, \textbf{51} (2019), no. 5, Paper No. 59, 32 pp.

\bibitem{AllenPerales} B. Allen, and R. Perales, \textit{Intrinsic flat stability of manifolds with boundary where volume converges and distance is bounded below}, preprint, 2020. arXiv:2006.13030 2020

\bibitem{Anderson} M. Anderson, \textit{Convergence and rigidity of manifolds under Ricci curvature bounds}, Inventiones Math., \textbf{106} (1990), 429--445.

\bibitem{Bartnik} R. Bartnik, \textit{The mass of an asymptotically flat manifold}, Comm. Pure Appl. Math., \textbf{39} (1986), no. 5, 661--693.

\bibitem{Bessonetal} G. Besson, G. Courtois, and S. Hersonsky, \textit{Poincar\'{e} inequality on complete Riemannian manifolds with Ricci curvature bounded below}, Math. Res. Lett., \textbf{25} (2018), no. 6, 1741--1769.

\bibitem{Bray} H. Bray, \textit{Proof of the Riemannian Penrose inequality using the positive mass theorem}, Journal of Diff. Geom., \textbf{59}, no. 2 (2001) 177-267.

\bibitem{BKKS} H. Bray, D. Kazaras, M. Khuri, and D. Stern, \textit{Harmonic functions and the mass of 3-dimensional asymptotically flat Riemannian manifolds}, preprint, 2020. arXiv:1911.06754


\bibitem{Bryden} E. Bryden, \textit{Stability of the positive mass theorem for axisymmetric manifolds}, Pacific J. Math., \textbf{305} (2020), no. 1, 89--152.

\bibitem{BrydenKhuriSormani} E. Bryden, M. Khuri, and C. Sormani, \textit{Stability of the spacetime positive mass theorem in spherical symmetry}, J. Geom. Anal., \textbf{31} (2021), no. 4, 4191--4239.



\bibitem{C} J. Cheeger, \textit{Degeneration of Riemannian metrics under Ricci curvature bounds.} Lezioni Fermiane. [Fermi Lectures] (2001) ii+77 pp.

\bibitem{CC} J. Cheeger, and T. Colding, \textit{Lower bounds on Ricci curvature and the almost rigidity of warped products}, Ann. of Math., \textbf{144} (1996) 189--237.

\bibitem{CheegerGromoll} J. Cheeger, and D. Gromoll, \textit{The splitting theorem for manifolds of nonnegative Ricci curvature}, J. Differential Geom., \textbf{6} (1971), no. 1, 119--128.


\bibitem{ChengYau}  S.-Y. Cheng, and S.-T. Yau, \textit{Differential equations on Riemannian manifolds and their geometric applications}, Comm. Pure Appl. Math., \textbf{28} (1975), 333--354.

\bibitem{GT} D. Gilbarg, and N. S. Trudinger. \textit{Elliptic partial differential equations of second order.} Vol. 224. Springer, 2001.

\bibitem{GromovLawson} M. Gromov, and H. B. Lawson, \textit{Spin and scalar curvature in the presence of a fundamental group. I}, Annals of Math., \textbf{111} no. 2, (1980) 209-230

\bibitem{Haslhofer} R. Haslhofer, \textit{A mass-decreasing flow in dimension three}, Math. Res. Lett., \textbf{19} (2012), no. 4, 927--938.

\bibitem{HL} H.-J. Hein, and C. LeBrun, \textit{Mass in K\"{a}hler geometry}, Comm. Math. Phys., \textbf{347} (2016), no. 1, 183--221.

\bibitem{HKK} S. Hirsch, D. Kazaras, M. Khuri, \textit{Spacetime Harmonic Functions and the Mass of 3-Dimensional Asymptotically Flat Initial Data for the Einstein Equations}, J. Differential Geom., to appear, arXiv:2002.01534.

\bibitem{Honda} S. Honda, \textit{Ricci curvature and orientability}, Calc. Var. Partial Differential Equations, \textbf{56} (2017), no. 6, Paper No. 174.

\bibitem{HuangLee} L.-H. Huang, and D. Lee, \textit{Stability of the positive mass theorem for graphical hypersurfaces of Euclidean space} Comm. Math. Phys., \textbf{337} (2015), no. 1, 151--169.

\bibitem{HuangLeePerales} L.-H. Huang, D. Lee, and R. Perales, \textit{Intrinsic flat convergence of points and
applications to stability of the positive mass theorem}, preprint, 2020. arXiv:2010.07885


\bibitem{HuangLeeSormani} L.-H. Huang, D. Lee, and C. Sormani, \textit{Intrinsic flat stability of the positive mass theorem for graphical hypersurfaces of Euclidean space}, J. Reine Angew. Math., \textbf{727} (2017), 269--299.

\bibitem{HI} G. Huisken, and T. Ilmanen, \textit{The inverse mean curvature flow and the Riemannian Penrose inequality}, J. Differential Geom., \textbf{59} (2001), 353-437.

\bibitem{Jez} Jezierski J, Migacz S. \textit{Charges of the gravitational field and (3+ 1) decomposition of CYK tensors part 2.}, preprint, 2019. arXiv:1903.06907

\bibitem{KapovitchWilking} V. Kapovich, and B. Wilking, \textit{Structure of fundamental groups of manifolds with Ricci curvature bounded below}, preprint, 2011. arXiv:1105.5955

\bibitem{Lee} D. Lee, \textit{Geometric Relativity}, Graduate Studies in Mathematics, \textbf{201}, American Mathematical Society, Providence, RI, 2019.

\bibitem{LeeSormani} D. Lee, and C. Sormani, \textit{Stability of the positive mass theorem for rotationally symmetric Riemannian manifolds}, J. Reine Angew. Math., \textbf{686} (2014), 187--220.

\bibitem{Li0} Y. Li, \textit{Ricci flow on asymptotically Euclidean manifolds}, Geom. Topol., \textbf{22} (2018), 1837-1891.

\bibitem{Lohkamp0} J. Lohkamp, \textit{Scalar curvature and hammocks}, Math. Ann., \textbf{313} (1999), no. 3, 385-407.

\bibitem{Lohkamp1} J. Lohkamp, \textit{The higher dimensional positive mass theorem I}, preprint, 2016. arXiv:math/0608795



\bibitem{MatveevPortegies} R. Matveev, and J. Portegies, \textit{Intrinsic flat and Gromov-Hausdorff convergence of manifolds with Ricci curvature bounded below}, J. Geom. Anal., \textbf{27} (2017), no. 3, 1855--1873.

\bibitem{PT1} T. Parker, and C. Taubes, \textit{On Witten's proof of the positive energy theorem}, Comm. Math. Phys., \textbf{84} (1982), no. 2, 223--238.


\bibitem{Petersen} P. Petersen, \emph{Riemannian Geometry}, 2nd edition, Springer, New York, 2006.



\bibitem{SakovichSormani} A. Sakovich, and C. Sormani \textit{Almost rigidity of the positive mass theorem for asymptotically hyperbolic manifolds with spherical symmetry}, Gen. Relativity Gravitation, \textbf{49} (2017), no. 9, Paper No. 125, 26 pp.

\bibitem{SchoenYau79structure} R. Schoen, and S.-T. Yau, \textit{On the structure of manifolds with positive scalar curvature}, Manuscripta Mathematica, \textbf{28} (1979), no. 1, 159-183.

\bibitem{SchoenYau} R. Schoen, and S.-T. Yau, \textit{On the proof of the positive mass conjecture in general relativity}, Comm. Math. Phys. \textbf{65} (1979), no. 1, 45-76.

\bibitem{SchoenYauII} R. Schoen, and S.-T. Yau, \textit{Proof of the positive mass theorem II}, Comm. Math. Phys., \textbf{79} (1981), 231-260.

\bibitem{SchoenYau3} R. Schoen, and S.-T. Yau, \textit{The energy and the linear momentum in of space-times in general relativity}, Comm. Math. Phys., \textbf{79} (1981), no. 1, 47-51.

\bibitem{SYRedB} R. Schoen, and S.-T. Yau, \textit{Lectures on Differential Geometry}, International Press, Boston, 1994.

\bibitem{SchoenYau4} R. Schoen, and S.-T. Yau, \textit{Positive scalar curvature and minimal hypersurface singularities}, preprint, 2017. arXiv:1704.05490

\bibitem{Sormani} C. Sormani, \textit{Conjectures on convergence and scalar curvature}, preprint, 2021. arXiv:2103.10093

\bibitem{SormaniStavrov} C. Sormani, and I. Stavrov Allen,
    \textit{Geometrostatic manifolds of small ADM mass},
Comm. Pure Appl. Math., \textbf{72} (2019), no. 6, 1243--1287.

\bibitem{SormaniWenger} C. Sormani, and S. Wenger, \textit{Intrinsic flat convergence of manifolds and other integral current spaces}, J. Differential Geom., \textbf{87} (2011), no. 1, 117--199.

\bibitem{Stern1} D. Stern, \textit{Scalar curvature and harmonic maps to $S^1$}, J. Differential Geom., to appear. arXiv:1908.09754



\bibitem{Witten1} E. Witten, \textit{A simple proof of the positive energy theorem}, Comm. Math. Phys. \textbf{80} (1981), no. 3, 381-402.


\end{thebibliography}
\end{document}